\numberwithin{equation}{section}
\newcommand{\NN}{\mathbb N}
\newcommand{\ZZ}{\mathbb Z}
\newcommand{\del}{\subseteq}
\newcommand{\eps}{\varepsilon}
\newcommand{\ph}{\varphi}
\newcommand{\gclass}{\mathcal{C}}
\DeclareMathOperator{\homdist}{HomDist}
\newcommand{\frob}[1]{\lVert #1 \rVert_{\mathrm{Frob}}}
\newcommand{\opnorm}[1]{\lVert #1 \rVert_{\mathrm{op}}}
\newcommand{\hsnorm}[1]{\lVert #1 \rVert_{\mathrm{HS}}}
\newcommand{\Uni}{\mathrm{U}}
\newcommand{\uf}{\mathcal{U}}
\newcommand{\Uniuf}{\Uni_\uf^{\lVert\cdot\rVert}}
\newcommand{\Matuf}{\mathrm{M}_\uf^{\lVert\cdot\rVert}}
\DeclareMathOperator{\dist}{dist}
\DeclareMathOperator{\spam}{span}
\DeclareMathOperator{\Ra}{Re}
\DeclareMathOperator{\diag}{diag}
\DeclareMathOperator{\id}{id}
\DeclareMathOperator{\Tr}{Tr}
\DeclareMathOperator{\Hom}{Hom}
\DeclareMathOperator{\defect}{def}
\newtheorem{stn}{Sætning}[section]
\newtheorem{theorem}[stn]{Theorem}
\newtheorem{cor}[stn]{Corollary}
\newtheorem{lemma}[stn]{Lemma}
\newtheorem{prop}[stn]{Proposition}
\newtheorem{question}[stn]{Question}
\theoremstyle{definition}\newtheorem{defi}[stn]{Definition}
\theoremstyle{remark}\newtheorem{rem}[stn]{Remark}
\def\cH{{\mathcal H}}
\def\cS{{\mathcal S}}
\def\cC{{\mathcal C}}
\newcommand{\Z}{\mathbb Z}
\def\ls#1{\langle #1 \rangle}
\title[Asymptotic representations]{Stability, cohomology vanishing, and non-approximable groups}
\author[M.\ De Chiffre]{Marcus De Chiffre}
\address{M.D.C., TU Dresden, Germany}
\email{marcus\_dorph.de\_chiffre@tu-dresden.de}
\author[L.\ Glebsky]{Lev Glebsky}
\address{L.G., Universidad Aut\'onoma de San Luis Potos\'\i, M\'exico.}
\email{glebsky@cactus.iico.uaslp.mx}
\author[A.\ Lubotzky]{Alexander Lubotzky}
\address{A.L., Hebrew University, Israel}
\email{alexlub@math.huji.ac.il}
\author[A.\ Thom]{Andreas Thom}
\address{A.T., TU Dresden, Germany}
\email{andreas.thom@tu-dresden.de}
\begin{document}
\begin{abstract}
Several well-known open questions (such as: are all groups sofic/hyperlinear?) have a common form: can all groups be approximated by asymptotic homomorphisms into the symmetric groups $\mathrm{Sym}(n)$ (in the sofic case) or the finite dimensional unitary groups $\Uni(n)$ (in the hyperlinear case)? In the case of $\Uni(n)$, the question can be asked with respect to different metrics and norms. 
This paper answers, for the first time, one of these versions, showing that there exist fintely presented groups which are \emph{not} approximated by $\Uni(n)$ with respect to the Frobenius norm $\frob{T}=\sqrt{\sum_{i,j=1}^n|T_{ij}|^2},T=[T_{ij}]_{i,j=1}^n\in\mathrm{M}_n(\mathbb C)$.
Our strategy is to show that some higher dimensional cohomology vanishing phenomena implies \emph{stability}, that is, every
Frobenius-approximate homomorphism into finite-dimensional unitary groups is close to an actual homomorphism.
This is combined with existence results of certain non-residually finite central extensions of lattices in some simple $p$-adic Lie groups. 
These groups act on high rank Bruhat-Tits buildings and satisfy the needed vanishing cohomology phenomenon and are thus stable and not Frobenius-approximated.
\end{abstract}

\maketitle

\section{Introduction}
Since the very beginning of the study of groups, groups were studied by looking at their orthogonal and unitary representations. It is very natural to relax the notion of a representation and require the group multiplication to be preserved only up to little mistakes in a suitable metric. First variations of this topic appeared already in the work of Turing \cite{turing1938finite} and later Ulam \cite[Chapter VI]{ulam}. This theme knows many variations, ranging from sofic approximations as introduced by Gromov \cite{gromov} and operator-norm approximations that appeared in the theory of operator algebras \cite{MR1437044, cde} to questions related to Connes' Embedding Problem, see \cite{MR0454659, MR2460675} for details. In each case, approximation properties of groups are studied relative to a particular class of metric groups. More specifically, let $\Gamma$ be a countable group and let $(G_n,d_n)_{n=1}^{\infty}$ be a sequence of metric groups with bi-invariant metrics $d_n$. We say that $\Gamma$ is $(G_n,d_n)_{n=1}^{\infty}$-approximated, if there exists a separating sequence of asymptotic homomorphisms $\ph_n\colon \Gamma \to G_n$, i.e.\ a sequence of maps $\ph_n$ that becomes multiplicative in the sense that
$$\lim_{n \to \infty} d_n(\ph_n(gh),\ph_n(g)\ph_n(h)) = 0, \qquad\text{for all\ } g,h\in \Gamma,$$
which is also separating, that is, $d_{n}(\ph_n(g),1_{G_n})$ is bounded away from zero for all $g \neq 1_{\Gamma}$, see Section \ref{apphom} for precise definitions. Several examples of this situation have been studied in the literature (see \cite{Asurvey} for a survey):
\begin{enumerate}
\item[(i)] $G_n = {\rm Sym}(n)$, the symmetric group on an $n$-point set, with $d_n$ the normalized Hamming distance. In this case, $(G_n,d_n)_{n=1}^{\infty}$-approximated groups are called \emph{sofic}, see \cite{gromov, MR2460675}.
\item[(ii)] $G_n$  an arbitrary finite group equipped with any bi-invariant metric. In this case, approximated groups are called \emph{weakly sofic}, or ${\mathcal C}$-\emph{approximated} depending on a particular restricted family $\mathcal C$ of finite groups. An interesting connection to pro-finite group theory and recent advances can be found in \cite{glebskyrivera2008sofic, chnikthom}.
\item[(iii)] $G_n={\rm U}(n)$, the unitary group on an $n$-dimensional Hilbert space, where the metric $d_n$ is induced by the normalized Hilbert-Schmidt norm $\hsnorm T = \sqrt{n^{-1} \sum_{i,j=1}^n |T_{ij}|^2}$. In this case, approximated groups are sometimes called \emph{hyperlinear} \cite{MR2460675}. 
\item[(iv)] $G_n={\rm U}(n)$, where the metric $d_n$ is induced by the operator norm $\|T\|_{\rm op} = \sup_{\|v\|=1} \|Tv\|.$ In this case, groups which are $(G_n,d_n)_{n=1}^{\infty}$-approximated groups are called {\rm MF}, see \cite{cde}.
\item[(v)] $G_n={\rm U}(n)$, where the metric $d_n$ is induced by the \emph{unnormalized} Hilbert-Schmidt norm $\|T\|_{\rm Frob}= \sqrt{ \sum_{i,j=1}^n |T_{ij}|^2}$, also called Frobenius norm. We will speak about Frobenius-approximated groups in this context.
\end{enumerate}

Note that the approximation properties are \emph{local} in the sense that only finitely many group elements and their relations have to be considered for fixed $\ph_n$. This is in stark contrast to the \emph{uniform} situation, which -- starting with the work of of Grove-Karcher-Ruh and Kazhdan \cite{johnson, art:kazh} -- is much better understood, see \cite{MR3038548, chozth}.

Well-known and longstanding problems, albeit in different fields of mathematics, ask if \emph{any} group exists which is not approximated in either of the above settings. In setting (i), this is Gromov's question whether all groups are sofic \cite{gromov, MR2460675}. The similar question in the context of (iii) is closely related to Connes' Embedding Problem \cite{MR0454659, MR2460675}. Indeed, the existence of a non-hyperlinear group, whould answer Connes' Embedding Problem in the negative. In \cite{MR1437044}, Kirchberg asked whether any stably finite $C^*$-algebra is embeddable into an norm-ultraproduct of matrix algebras, implying a positive answer to the approximation problem in the sense of (iv) for any group. Recent breakthrough results imply that any amenable group is MF, i.e.\ approximated in the  sense of (iv), see \cite{MR3583354}.

\vspace{0.1cm}

In this paper, we want to introduce a conceptually new technique that allows us to provide groups that are not approximated in the sense of (v) above, i.e.\ we show that there are finitely presented groups which are not approximated by unitary groups $\{{\rm U}(n) \mid n \in \mathbb N\}$ with their Frobenius norm. Our techniques do not apply directly to the context of (iii), so we cannot say anything conclusive about Connes' Embedding Problem, but since the norms in (iii) and (v) are related by a normalization constant, we believe that we provide a promising new angle of attack.

\vspace{0.1cm}

Before we start out explaining our strategy and some notation let us state the main results of this article.

\begin{theorem} \label{main}
There exist finitely presented groups which are not Frobenius-approximated. 
\end{theorem}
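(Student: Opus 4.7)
The plan is to construct a finitely presented group $\Gamma$ satisfying two seemingly antagonistic properties and then pit them against each other. Property (A), \emph{Frobenius stability}: every Frobenius-approximate homomorphism $\Gamma\to\Uni(n)$ is uniformly close to a genuine homomorphism. Property (B), \emph{dearth of finite-dimensional unitary representations}: there is a non-trivial element $z\in\Gamma$ that is killed by every finite-dimensional unitary representation. Any separating sequence of approximate homomorphisms would, by (A), be close to a sequence of genuine representations which by (B) would fail to separate $z$ from the identity, yielding a contradiction with the separation condition.

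First I would recast Frobenius approximability cohomologically. Fix a finite presentation of $\Gamma$. An $\eps$-approximate homomorphism $\ph\colon\Gamma\to\Uni(n)$ is the same data as a unitary representation $\tilde\ph$ of the free group on the generators together with a ``defect'' $2$-cochain $\beta(g,h):=\ph(gh)^{-1}\ph(g)\ph(h)$ on relators, small in Frobenius norm. After linearisation, $\beta-1$ becomes a $2$-cochain with coefficients in $\mathfrak{u}(n)$ via the adjoint action of $\ph$, and the obstruction to straightening it away is captured by $H^2(\Gamma;\mathrm{ad}\,\tilde\ph)$. The key technical input I would aim to establish is: \emph{if $H^2(\Gamma;\pi)=0$ for every finite-dimensional unitary representation $\pi$ of $\Gamma$, with a dimension-independent norm bound on a cohomological primitive, then $\Gamma$ is Frobenius-stable.} The proof uses the uniform vanishing to produce a bounded right inverse to the coboundary, applies it to $\beta-1$ to get a small $1$-cochain $\eta$, and replaces $\ph$ by a twist $\exp(\eta)\ph$ whose defect is quadratically smaller; iterating (or invoking an implicit function theorem on the representation variety) yields a genuine homomorphism in a Frobenius neighbourhood of $\ph$.

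For the vanishing input I would take $\Lambda$ a cocompact arithmetic lattice in a high-rank simple $p$-adic Lie group acting on its Bruhat--Tits building $\mathcal{B}$. By Garland's method, refined for unitary coefficients, the local spectral gap on links in $\mathcal{B}$ yields $H^i(\Lambda;\pi)=0$ uniformly in $\pi$ for $1\le i\le\dim\mathcal{B}-1$ with quantitative norm control; in particular $H^2(\Lambda;\pi)=0$ once the rank is at least three. I would then construct $\Gamma$ as a finitely presented central extension
\[
1\to\Z\to\Gamma\to\Lambda\to 1
\]
whose extension class lies in $H^2(\Lambda;\Z)$ and represents an element of infinite order whose reductions mod $N$ are non-trivial in a controlled way; such classes exist for these lattices owing to failure of the congruence subgroup property in higher rank, and arrange $\Gamma$ to be non-residually finite on a specific element $z$ of the central $\Z$. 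The amenability of the kernel $\Z$ then allows the Hochschild--Serre spectral sequence to transfer the uniform vanishing of $H^2$ from $\Lambda$ to $\Gamma$, which by the first step makes $\Gamma$ Frobenius-stable.

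Finally I would invoke Margulis superrigidity in its $p$-adic form to ensure that every finite-dimensional unitary representation of $\Lambda$, and hence of $\Gamma$, has finite image. Thus the kernel of every such representation of $\Gamma$ contains the intersection of all finite-index normal subgroups, which by construction of $\Gamma$ is non-trivial and in particular contains $z$. If a separating Frobenius-asymptotic sequence $\ph_n\colon\Gamma\to\Uni(k_n)$ existed, stability would yield genuine $\psi_n$ with $\frob{\ph_n(z)-\psi_n(z)}\to 0$; since $\psi_n(z)=1$ for every $n$, we would get $\frob{\ph_n(z)-1}\to 0$, contradicting separation. Finite presentation is inherited because $\Lambda$ is finitely presented and $\Z$ is finitely generated and central. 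The principal obstacle is the quantitative, \emph{dimension-independent} stability of the first step: the cohomological primitive produced by Hilbert-space vanishing must have a norm bound uniform in $\dim\pi$, a feature tailor-made for the \emph{unnormalised} Frobenius norm and which is precisely what fails, at present, in the normalised Hilbert--Schmidt regime underlying Connes' Embedding Problem. A secondary technical difficulty is carrying out the Hochschild--Serre transfer with uniform constants and exhibiting an explicit central $2$-cocycle whose reductions give a genuinely non-residually finite, finitely presented $\Gamma$.
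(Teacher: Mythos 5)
Your overall architecture is the same as the paper's: prove that a suitable second-cohomology vanishing ($2$-Kazhdan) forces Frobenius-stability, then manufacture a $2$-Kazhdan group that is not residually finite by centrally extending a cocompact lattice in a high-rank $p$-adic group, and finally observe that a stable, non--residually-finite group cannot be Frobenius-approximated. However, there are two genuine technical errors in the construction step and one superfluous but incorrect appeal.

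\textbf{The kernel must be finite, not $\ZZ$.} You propose an extension $1\to\ZZ\to\Gamma\to\Lambda\to1$ and claim that ``amenability of the kernel $\ZZ$'' lets Hochschild--Serre transfer the vanishing of $H^2$. This is false. In the spectral sequence $E_2^{p,q}=H^p(\Lambda,H^q(\ZZ,\cH))\Rightarrow H^{p+q}(\Gamma,\cH)$, the term $E_2^{1,1}=H^1(\Lambda,H^1(\ZZ,\cH))$ is the problem: $H^1(\ZZ,\cH)\cong\cH/\overline{\ran(\pi(t)-1)}$ (possibly a non-Hausdorff quotient) is nonzero whenever $\pi(t)$ has $1$ in its spectrum, and even when it is a Hilbert space, there is no reason for the vanishing to propagate. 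The correct hypothesis, used in the paper's Proposition on extensions, is that the kernel be \emph{strongly $n$-Kazhdan}, i.e.\ $H^k(\text{kernel},\cH)=0$ for all $k\le n$ and all unitary $\cH$. A finite group has this property trivially (averaging), while $\ZZ$ does not even have Property~(T). Accordingly, the paper works with a \emph{finite} central extension $1\to C\to\widetilde\Gamma\to\Gamma\to1$, where $C$ is a quotient of the cyclic group $C(p)$ of roots of unity in $\QQ_p$. You must replace $\ZZ$ with such a finite group.

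\textbf{The congruence subgroup property holds and is used, not circumvented.} You write that the relevant extension classes exist ``owing to failure of the congruence subgroup property in higher rank.'' This is the opposite of the truth. For the anisotropic-over-$\RR$, split-over-$\QQ_p$ groups $\mathbf G$ of type $C_n$, $n\ge3$, the congruence subgroup problem is \emph{solved affirmatively} by Rapinchuk and Tomanov, i.e.\ the arithmetic and congruence completions of $\mathbf G(\QQ)$ coincide. Deligne's mechanism relies on this: CSP identifies the profinite completion of $\Gamma$ with a compact-open subgroup of $\mathbf G(\mathbb A_{f\setminus\{p\}})$, and the non--residual-finiteness of $\widetilde\Gamma$ then comes from comparing the local metaplectic kernel $C(p)\cong\ZZ/(p-1)$ at $p$ with the global metaplectic kernel of $\mathbf G(\mathbb A)$ split over $\mathbf G(\QQ)$, which Prasad--Rapinchuk compute to have order at most $2$. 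Hence for $p\geq5$ the finite central extension is not residually finite. Without CSP the whole argument loses control of the profinite completion.

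\textbf{Superrigidity is both unnecessary and false here.} The lattice $\Lambda=\mathbf G(\ZZ[1/p])$ embeds in the \emph{compact} group $\mathbf G(\RR)\cong\mathrm{Sp}(n)$, so it has plenty of infinite-image finite-dimensional unitary representations; Margulis superrigidity does not force finite image. Fortunately you do not need it: by Mal'cev, any finite-dimensional unitary representation of a finitely generated group has residually finite image, so it factors through the maximal residually finite quotient, and hence kills the nontrivial element $z$ of the residual kernel. This is exactly the paper's observation that a finitely presented, $\Uni_{\rm Frob}$-stable and $\Uni_{\rm Frob}$-approximated group must be residually finite.

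On the stability step: your proposed iteration/implicit-function-theorem argument is a legitimate variant of the paper's approach and you correctly isolate the crucial requirement of \emph{dimension-independent} primitives. The paper obtains this cleanly by formulating $2$-Kazhdan for \emph{all} (including infinite-dimensional) Hilbert space representations, applying the open mapping theorem on the Fr\'echet cochain complex to get quantitative vanishing, and then running a defect-diminishing step inside the Hilbert space ultraproduct $\prod_{n\to\uf}(\mathrm M_{k_n}(\CC),\frob{\cdot})$, together with a compactness/extremality argument rather than a naive iteration. Your phrasing in terms of finite-dimensional coefficients with uniform constants is essentially equivalent after a diagonalization, but you should make that bridge explicit, and you should be aware that the Ballmann--\'Swi\polhk atkowski extension of Garland is precisely what gives vanishing for arbitrary Hilbert coefficients.
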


The groups we construct are central extensions of cocompact lattices in simple $p$-adic Lie groups. Specifically, we can take certain central extensions of ${\rm U}(2n) \cap {\rm Sp}(2n,\mathbb Z[i,1/p])$ for $n\geq 3$ and $p$ a large enough prime.

To prove Theorem \ref{main}, we use the notion of stability: A group is called $(G_n,d_n)$\emph{-stable} if every asymptotic homomorphism (not necessarily a separating one) is ``close'' to a true homomorphism (see Definition \ref{stabdef}). Now, if $G_n=\mathrm{U}(n)$ and $\Gamma$ is $(G_n,d_n)$-approximated and $(G_n,d_n)$-stable, one easily deduces that $\Gamma$ must be residually finite.
This basic observation suggests a way to find non-approximated groups: find a group $\Gamma$ which is stable but not residually finite. This method has failed so far for two reasons: (1) it is very difficult to prove stability directly and (2) even in the case where stability was proven,
(see e.g.\ \cite{MR3350728} and the references therein as well as \cites{becklub})
it was proven in a way that completely classifies asymptotic homomorphism and it is shown that all are close to a genuine homomorphisms. Thus, only groups which are already approximated have been shown to be stable so far.
The main technical novelty of our paper is the following theorem which provides a sufficient condition for a group to be Frobenius-approximated without assuming a priori that the group is approximated.

\begin{theorem} \label{main2}
Let $\Gamma$ be a finitely presented group such that \[H^2(\Gamma,\mathcal H_\pi)=\{0\}\] for every unitary representation $\pi \colon \Gamma \to {\rm U}(\mathcal H_\pi)$. Then, any asymptotic homomorphism $\ph_n \colon \Gamma \to {\rm U}(n)$ w.r.t.\ the Frobenius norm is asymptotically close to a sequence of homomorphisms, i.e. $\Gamma$ is Frobenius-stable.
\end{theorem}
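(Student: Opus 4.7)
The plan is to interpret the defect of a Frobenius-asymptotic homomorphism $\varphi_n\colon\Gamma\to U(n)$ as an approximate $2$-cocycle on $\Gamma$ with values in a unitary $\Gamma$-representation, trivialize it via the cohomology hypothesis, and convert the trivialization into a perturbation that turns $\varphi_n$ into a genuine homomorphism.

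Form the projective defect $\omega_n(g,h) := \varphi_n(g)\varphi_n(h)\varphi_n(gh)^{-1}\in U(n)$. A direct calculation gives the exact identity
$$\omega_n(g,h)\,\omega_n(gh,k) \;=\; \mathrm{Ad}(\varphi_n(g))\bigl(\omega_n(h,k)\bigr)\cdot\omega_n(g,hk),$$
and $\|\omega_n(g,h)-I\|_{\mathrm{Frob}}\to 0$ for every $(g,h)\in\Gamma^2$. Write $\omega_n(g,h)=\exp(ia_n(g,h))$ with $a_n(g,h)$ self-adjoint and small in Frobenius; Baker--Campbell--Hausdorff turns the associativity identity above into an approximate cocycle equation
$$a_n(g,h)+a_n(gh,k)-\mathrm{Ad}(\varphi_n(g))(a_n(h,k))-a_n(g,hk) \;=\; q_n(g,h,k),$$
with BCH remainder $q_n$ quadratic in the $a_n$'s.

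Rescale by $\eps_n:=\max_{s,t\in S}\|a_n(s,t)\|_{\mathrm{Frob}}$ over a fixed finite generating set $S$ and pass to the Hilbert-space ultraproduct $\mathcal H_\omega:=\prod_{n,\omega}(M_n(\mathbb C),\|\cdot\|_{\mathrm{Frob}})$. The adjoint actions $\mathrm{Ad}\,\varphi_n$ assemble into a unitary $\Gamma$-representation $\pi$ on $\mathcal H_\omega$, and --- provided $q_n=o(\eps_n)$ in Frobenius --- the ultralimit $\tilde a:=\lim_\omega a_n/\eps_n$ is a bona fide element of $Z^2(\Gamma,\mathcal H_\pi)$. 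Invoking $H^2(\Gamma,\mathcal H_\pi)=\{0\}$, write $\tilde a=\delta\tilde b$, lift $\tilde b$ to a sequence of self-adjoint 1-cochains $\tilde b_n\colon\Gamma\to(M_n)_{sa}$, set $b_n:=\eps_n\tilde b_n$, and define the correction $\psi_n(g):=\exp(-ib_n(g))\,\varphi_n(g)$. A short BCH calculation then shows that the defect of $\psi_n$ is $o(\eps_n)$, strictly smaller than that of $\varphi_n$. Since the hypothesis applies to \emph{every} unitary representation, the whole argument reruns with $\psi_n$ in place of $\varphi_n$; iterating, the successive defects decay super-linearly while the total Frobenius correction stays $O(\eps_n)$, and for all sufficiently large $n$ the iterate $\psi_n^{(\infty)}$ is a genuine homomorphism $\Gamma\to U(n)$ within Frobenius distance $O(\eps_n)$ of $\varphi_n$ on each group element --- which is Frobenius-stability.

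The main obstacle is establishing $q_n=o(\eps_n)$ in Frobenius. The natural bound for a BCH commutator is $\|[a,b]\|_{\mathrm{Frob}}\leq 2\|a\|_{\mathrm{op}}\|b\|_{\mathrm{Frob}}$, but Frobenius-smallness of $\omega_n-I$ does \emph{not} imply operator-norm smallness: a rank-$o(n)$ part of $\omega_n$ may remain $O(1)$-far from $I$, so $\|a_n\|_{\mathrm{op}}$ need not tend to zero. I would address this by spectrally truncating $\omega_n(g,h)$ to the subspace where it is close to $I$, logarithming only there, and absorbing the (rank-small, Frobenius-small) discarded piece into a separate, simpler error term. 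Interleaving this truncation consistently with the cocycle identity and with the iteration --- and ensuring that the constants arising from successive applications of the cohomology-vanishing hypothesis do not accumulate --- is where the real technical work lies.
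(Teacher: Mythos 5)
Your high-level strategy matches the paper's: interpret the defect as a $2$-cocycle valued in the Frobenius ultraproduct (which is a Hilbert space carrying a unitary $\Gamma$-action via conjugation), trivialize it using the cohomological hypothesis, and perturb $\varphi_n$ by a coboundary primitive to shrink the defect. But your implementation has two genuine gaps, both of which the paper is structured specifically to avoid.

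The first gap is the one you identified yourself: passing through the multiplicative defect $\omega_n(g,h)$, taking logarithms, and invoking BCH introduces a quadratic remainder $q_n$ that you cannot bound by $o(\eps_n)$ in Frobenius norm, because the commutator estimate requires operator-norm control on one factor, and Frobenius-smallness of $\omega_n - 1$ gives no operator-norm smallness (a rank-$o(n)$ piece can sit at distance $2$). Your proposed spectral truncation is not carried out, and interleaving it with the cocycle identity while preserving the exact algebraic structure is not obviously possible. The paper avoids this entirely by never taking logarithms: it works with the \emph{additive} defect $c_n(g,h) = (\tilde\varphi_n(g)\tilde\varphi_n(h) - \tilde\varphi_n(gh))/\defect(\varphi_n)$, for which the Hochschild-type cocycle identity
\[
\tilde\varphi_n(g)c_n(h,k) - c_n(gh,k) + c_n(g,hk) - c_n(g,h)\tilde\varphi_n(k) = 0
\]
holds \emph{exactly} by pure algebra, with no BCH remainder. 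The price is that $c_n$ lands in $M_{k_n}(\mathbb{C})$ rather than in skew-adjoints, but after passing to the ultraproduct one can symmetrize the primitive $\beta$ (Proposition \ref{iota}) and then exponentiate $-\defect(\varphi_n)\beta_n(g)$ at the very end, where a single application of the elementary estimate $\|1 - A - \exp(A)\| \leq \|A\|^2\exp\|A\|$ suffices. This reorders the use of the exponential so that the hard bound you were fighting never arises.

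The second gap is the iteration. You propose rerunning the improvement on $\psi_n$, claiming super-linear decay with bounded total correction; you correctly flag that non-accumulation of constants is ``where the real technical work lies,'' but you leave it unresolved, and it is a real problem: each improvement step invokes cohomology vanishing for a \emph{different} representation (the ultraproduct built from the corrected sequence), and the open-mapping constant a priori depends on the representation. The paper resolves this in two moves. First, a diagonalization argument (Corollary \ref{vanishcor}) shows that for a $2$-Kazhdan group the open-mapping constant $C_F$ can be chosen \emph{uniformly} over all unitary representations at once. Second, and more importantly, the paper does not iterate at all. It fixes an auxiliary functional $\theta(\varphi) = \homdist(\varphi) - 2C\,\defect(\varphi)$, uses compactness of the set of $\eps_n$-almost homomorphisms to pick a maximizer $\varphi_n$, applies the single-step improvement to get $\varphi_n'$ with $\defect(\varphi_n') \leq \tfrac14\defect(\varphi_n)$ and $\dist(\varphi_n,\varphi_n') \leq C\defect(\varphi_n)$, and then plays the maximality of $\theta(\varphi_n)$ against these two inequalities to force $\defect(\varphi_n) = 0$ outright. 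That extremal argument is what lets a single improvement step, rather than an infinite iteration, close the proof, and it is the piece you are missing.

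So: same cohomological idea, but the paper's choice of a linear (difference) cocycle in place of your logarithmic one eliminates the BCH obstruction, and its compactness-plus-maximization argument replaces your iteration and sidesteps the accumulation-of-constants problem. As written, your proposal would need both of those repairs to be complete.
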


The appearance of vanishing second cohomology groups may look surprising at first sight, but, inf fact, one can translate the question of approximating an asymptotic homomorphism by a true homomorphism to a question about splitting an exact sequence. When the norm is submultiplicative (as is the case of the Frobenius norm but \emph{not} of the normalized Hilbert-Schmidt norm) the kernel of this splitting problem is abelian (see Section \ref{altsec}). It is well-known that vanishing of the second cohomology with abelian coefficients means splitting of suitable exact sequences and hence is relevant the question of stability.
It is also interesting to observe that the second cohomology has already appeared in the work of Kazhdan \cite{art:kazh} in the context of uniform $\eps$-representations (of compact or amenable groups), a concept related to asymptotic representations, abeit essentially different.

Recall that the classical Kazhdan's Property (T) is equivalent to the statement that $H^1(\Gamma,\mathcal H_\pi)=0$ for all unitary representations $\pi\colon\Gamma\to\Uni(\mathcal H_\pi)$. 
We say that a group is $n$-Kazhdan if $H^n(\Gamma,\mathcal H_\pi)=\{0\}$ for every unitary representation
$\pi\colon\Gamma\to\Uni(\mathcal H_\pi).$ Theorem \ref{main2} simply says that every $2$-Kazhdan group is Frobenius stable.
Thus to prove Theorem \ref{main} it suffices to find $2$-Kazhdan groups which are not residually finite. Now, the seminal work of Garland \cite{MR0320180} (as was extended by Ballmann-\'Swi\polhk atkowski \cite{MR1465598} and others -- see Section \ref{hika} for details) shows that for every $2\leq r\in\NN$ and $p$ large enough, cocompact (arithmetic) lattices in simple $p$-adic Lie groups of rank $r$ are $n$-Kazhdan for every $1\leq n<r$.
In fact, a variant of this has been used to give examples of groups with Property (T), i.e.\ $1$-Kazhdan groups, which are not linear (and potentially also not residually finite) by using ``exotic'' buildings of rank $2$ (see \cite{MR3343347}). We want to prove the existence of non-residually finite $2$-Kazhdan groups, but there is a catch: as $n=2$, $r$ should be at least $3$, but a well-known result of Tits 
asserts that for $r\geq 3$, there are no ``exotic'' buildings of dimension $r$ and the standard ones coming from $p$-adic Lie groups provide lattices which are all residually finite.
To work around this point, we imitate a result (and method of proof) of Deligne \cite{MR507760}. Deligne showed that some non-uniform lattices in simple Lie groups (e.g.\ $\mathrm{Sp}(2n,\ZZ)$) have finite central extensions which are not residually finite. Raghunathan \cite{MR735524} extended it also to some cocompact lattices in ${\rm Spin}(2,n)$.
These examples became famous when Toledo \cite{MR1249171} used them to provide examples of fundamental groups of algebraic varieties which are not residually finite. In the last section, we will explain how Deligne's method can be applied also to cocompact lattices in certain $p$-adic Lie groups. Along the way we use the solution to the congruence subgroup problem for these lattices which was provided by Rapinchuk \cite{MR1015345} and Tomanov \cite{MR1022796}. This way we will get finite central extensions of certain cocompact $p$-adic lattices which are themselves not residually finite anymore. Finally, an easy spectral sequence argument shows that a finite (central) extension of an $n$-Kazhdan group is also $n$-Kazhdan. Thus, the non-residually finite central extensions of the abovementioned lattices provide the non-Frobenius-approximated group promised in Theorem \ref{main}.

\vspace{0.1cm}

Along the way in Section \ref{stabapp}, we also provide examples of residually finite groups which are not Frobenius-stable and of finitely generated non-residually finite groups which are Frobenius-approximated. It is currently unclear if maybe all amenable (or even all solvable) groups are Frobenius-approximated. Moreover, it is an open problem to decide if the class of Frobenius-approximated groups is closed under central quotients or under crossed products by $\mathbb Z$, compare with \cite{MR3320894, MR2566306}.

\vspace{0.1cm}

The results of this article are part of the PhD project of the first named author.

\subsection{Notation}
Given any set $S$ we let $\mathbb F_S$ denote the free group on $S$. For any $R\del \mathbb F_S$ we let $\llangle R\rrangle$ denote the normal subgroup generated by $R$ and we let $\langle S\mid R\rangle:=\mathbb F_S/\llangle R\rrangle$ be the group with generators $S$ and relations $R$.
We use the convention $\NN=\{1,2,\ldots\}$.
For $n\in\NN$ we let $ {\rm M}_{n}(\mathbb C)$ denote the complex $n\times n$-matrices and ${\rm U}(n)\del {\rm M}_{n}(\mathbb C)$ the group of unitary matrices. The identity matrix is denoted by $1_n$.

Recall, an ultrafilter $\uf$ on $\NN$ is a non-trivial collection of subsets of $\NN$, such that (i) $A \in \uf$, $A \subset B$ implies $B \in \uf$, (ii) $A,B \in \uf$ implies $A\cap B \in \uf$, and (iii) $A \not \in \uf$ if and only if $\NN \setminus A \in \uf$ holds. We say that $\uf$ is non-principal if $\{n\} \not \in \uf$ for all $n \in \NN$. The existence of non-principal ultrafilters on $\NN$ is ensured by the Axiom of Choice. We can view a non-principal ultrafilter as a finitely additive probability measure defined on all subsets of $\NN$, taking only the values $\{0,1\}$ and giving the value $0$ to all finite subsets of $\NN$.

Throughout the whole paper, we fix a non-principal ultrafilter $\uf$ on $\NN$. Given some statement $P(n)$ for $n\in\NN$, we use the wording  \emph{$P(n)$ holds for most $n\in\NN$} as $\{n\in\NN\mid P(n)\}\in \uf$. Given a bounded sequence $(x_n)_{n\in\NN}$ of real numbers we denote the limit along the ultrafilter by $\lim_{n\to\uf}x_n\in(-\infty,\infty)$. Formally, the limit is the unique real number $x$ such for all $\varepsilon>0$ we have
$\{n \in \NN \mid |x_n - x|< \varepsilon \} \in \uf.$ For unbounded sequences, the limit takes a well-defined value in the extended real line $[-\infty,\infty]$.

We adopt the Landau notation; given two sequences $(x_n)_{n\in\NN}$ and $(y_n)_{n\in\NN}$ of non-negative real numbers, we write $x_n=O_\uf(y_n)$ if there exists $C>0$ such that $x_n\leq Cy_n$ for most $n\in\NN$ and $x_n=o_\uf(y_n)$ if there exists a third sequence $(\eps_n)_{n\in\NN}$ of non-negative real numbers such that $\lim_{n\to\uf}\eps_n=0$ and $x_n=\eps_ny_n$.

\subsection{Unitarily invariant norms}
Recall that a norm $\lVert\cdot\rVert$ on $ {\rm M}_d(\mathbb C)$ is called \emph{unitarily invariant} if
\[\lVert UAV\rVert=\lVert A\rVert,\]
for all $A\in {\rm M}_d(\mathbb C)$ and $U,V\in{\rm U}(d)$.

Important examples of such norms are the operator norm $\opnorm T=\sup_{\Vert v\Vert=1}\Vert Tv\Vert,$ the Frobenius norm $\frob T=\Tr(T^*T)^{1/2}=\sqrt{\sum_{i,j}^n|T_{ij}|^2}$ (also known as the unnormalized Hilbert-Schmidt norm) and the normalized Hilbert-Schmidt norm (or 2-norm) given by $\hsnorm T=\frac{1}{\sqrt n}\frob T$  for $T\in {\rm M}_{n}(\mathbb C)$. 
Here, $T^*$ denotes the adjoint matrix and $T$ is called self-adjoint if $T=T^*$. The matrix $T$ is called unitary if $TT^*=T^*T=1_n$.
We recall some basic and well-known facts about unitarily invariant norms. For $T \in {\rm M}_{n}(\mathbb C)$, we set $|T|=(T^*T)^{1/2}$. For self-adjoint matrices $A,B$, we write $A \leq B$ if $B-A$ is positive semi-definite, i.e. if $B-A$ has only non-negative eigenvalues.

\begin{prop}\label{uniprop}
Let $A,B,C\in {\rm M}_d(\mathbb C)$. Then, for any unitarily invariant norm, it holds that
\begin{enumerate}
\item[$(1)$] $\Vert ABC\Vert\leq \opnorm{A}\Vert B\Vert\opnorm{C},$
\item[$(2)$] $\lVert A\rVert=\lVert A^*\rVert=\lVert |A|\rVert$,
\item[$(3)$] If $A$ and $B$ are positive semi-definite matrices and $A\leq B$, then $\lVert A\rVert\leq \lVert B\rVert.$
\end{enumerate}
\end{prop}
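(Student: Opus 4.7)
The plan is to prove the three items in the order $(2),(3),(1)$, since $(2)$ is a one-line consequence of polar decomposition, $(3)$ is the serious input, and $(1)$ then drops out by reducing everything to a positive semi-definite comparison.

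For $(2)$, I would invoke the polar decomposition $A = U|A|$ with $U\in\mathrm{U}(d)$. Unitary invariance on the left gives $\lVert A\rVert = \lVert U|A|\rVert = \lVert |A|\rVert$. Taking adjoints, $A^* = |A|U^*$, and unitary invariance on the right gives $\lVert A^*\rVert = \lVert |A|U^*\rVert = \lVert|A|\rVert$. (If $A$ is not invertible one uses the standard extension of the partial isometry in the polar decomposition to a genuine unitary, which is possible since we are in finite dimensions.)

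For $(3)$, the plan is to go through singular values. By the von Neumann / Ky Fan theorem, every unitarily invariant norm on $\mathrm{M}_d(\mathbb C)$ is of the form $\lVert T\rVert = \Phi(\sigma_1(T),\dots,\sigma_d(T))$ for some symmetric gauge function $\Phi$, and the Ky Fan dominance principle says $\lVert X\rVert\le\lVert Y\rVert$ whenever the partial sums of singular values satisfy $\sum_{i=1}^k\sigma_i(X)\le\sum_{i=1}^k\sigma_i(Y)$ for every $k$. When $0\le A\le B$ the singular values are the eigenvalues, and Weyl's monotonicity principle (from the min-max characterization) gives $\lambda_i(A)\le\lambda_i(B)$ for each $i$; hence all partial sums are dominated and $(3)$ follows. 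This is where the only real work sits, and the main obstacle is simply knowing (or quoting) Ky Fan dominance; I would cite Bhatia's \emph{Matrix Analysis} for both ingredients rather than reproving them.

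For $(1)$, I first handle the two-factor case $\lVert AB\rVert\le\opnorm A\lVert B\rVert$. From $A^*A\le\opnorm A^2\cdot 1_d$ and conjugation by $B$ we obtain the operator inequality
\[|AB|^2 = B^*A^*AB \le \opnorm A^2\, B^*B = \opnorm A^2\,|B|^2.\]
By operator monotonicity of $x\mapsto x^{1/2}$ on positive semi-definite matrices, this yields $|AB|\le\opnorm A\,|B|$, so part $(3)$ combined with $(2)$ gives $\lVert AB\rVert = \lVert|AB|\rVert \le \opnorm A\,\lVert|B|\rVert = \opnorm A\,\lVert B\rVert$. The right-hand multiplication is dealt with by taking adjoints: using $(2)$ again,
\[\lVert BC\rVert = \lVert C^*B^*\rVert \le \opnorm{C^*}\,\lVert B^*\rVert = \opnorm C\,\lVert B\rVert,\]
and concatenating gives $\lVert ABC\rVert\le\opnorm A\lVert BC\rVert\le\opnorm A\lVert B\rVert\opnorm C$. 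The only slightly non-trivial ingredient here is operator monotonicity of the square root, which is a standard consequence of the integral representation $x^{1/2}=\pi^{-1}\int_0^\infty(1-t(x+t)^{-1})t^{-1/2}\,dt$.
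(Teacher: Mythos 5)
Your proof is correct, but it runs the argument in the opposite direction from the one sketched in the paper's source. The paper proves $(1)$ first, by an elementary convexity argument: after normalizing so that $\opnorm{A}=\opnorm{C}=1$, one uses that the operator-norm unit ball of $\mathrm{M}_d(\mathbb C)$ is the convex hull of $\mathrm{U}(d)$, writes $A=\sum_i\lambda_i U_i$ and $C=\sum_j\mu_j V_j$ as convex combinations of unitaries, and applies the triangle inequality together with unitary invariance to get $\Vert ABC\Vert\le\sum_{i,j}\lambda_i\mu_j\Vert U_iBV_j\Vert=\Vert B\Vert$. Item $(2)$ is handled by polar decomposition exactly as you do, and $(3)$ is then deduced from $(1)$ by factoring $A$ through $B$ via a contraction (take $K$ with $A^{1/2}=KB^{1/2}$, so $A=KBK^*$) and applying $(1)$. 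You instead take $(3)$ as the primary input, proving it via the von Neumann/Ky Fan description of unitarily invariant norms as symmetric gauge functions of singular values, Ky Fan dominance, and Weyl monotonicity, and then derive $(1)$ from $(3)$ and operator monotonicity of $x\mapsto x^{1/2}$. Both routes are sound, and the order matters: the paper goes $(1)\Rightarrow(3)$, you go $(3)\Rightarrow(1)$. The paper's route is more elementary and self-contained, since the Russo--Dye-type fact that the unit ball of $\mathrm{M}_d(\mathbb C)$ equals $\mathrm{conv}\,\mathrm{U}(d)$ is a short finite-dimensional exercise, whereas Ky Fan dominance and the symmetric gauge function characterization are considerably heavier theorems to cite. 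Your route buys a transparent majorization picture for $(3)$ and isolates the clean operator inequality $|AB|\le\opnorm{A}\,|B|$, which is a useful ingredient in its own right.
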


\begin{prop}\label{quadclose}
Let $A\in{\rm U}(d)$. Then there is a unitary $B\in{\rm U}(d)$ such that $B^2=1$ and
\[\lVert B-A\lVert\leq \lVert 1_d-A^2\rVert,\]
for all unitarily invariant norms.
\end{prop}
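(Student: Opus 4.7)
The natural candidates for $B$ are the self-adjoint unitaries, whose spectrum lies in $\{\pm 1\}$. I would therefore start by diagonalizing $A$ via the spectral theorem, writing $A = UDU^*$ with $U \in {\rm U}(d)$ and $D = \diag(e^{i\theta_1},\ldots,e^{i\theta_d})$ for some $\theta_j\in(-\pi,\pi]$, and then define
\[
B := U\,\diag(\epsilon_1,\ldots,\epsilon_d)\,U^*,
\]
where $\epsilon_j \in \{-1,+1\}$ is chosen as the point of $\{\pm 1\}$ closest to $e^{i\theta_j}$; concretely, $\epsilon_j = 1$ if $\cos\theta_j \geq 0$ and $\epsilon_j = -1$ otherwise. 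Then $B$ is manifestly self-adjoint and unitary with $B^2 = 1_d$.

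The heart of the argument is the scalar inequality
\[
|e^{i\theta}-\epsilon|^2 \;=\; 2(1-|\cos\theta|), \qquad |1-e^{2i\theta}|^2 \;=\; 4\sin^2\theta \;=\; 4(1-|\cos\theta|)(1+|\cos\theta|),
\]
which gives $|e^{i\theta}-\epsilon| \leq |1-e^{2i\theta}|$ (with an extra slack factor of $1/\sqrt{2}$). This is exactly the place where the choice of $\epsilon_j$ as the closer sign matters; the constant choice $\epsilon_j \equiv 1$, for instance, would break the inequality near $\theta = \pi$, so I expect verifying the correct branch selection to be the only actually subtle point.

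Next, I would observe that $|B-A| = U|\diag(\epsilon_j) - D|U^*$ and $|1_d-A^2| = U|1_d - D^2|U^*$ are simultaneously diagonal in the basis provided by $U$, and the scalar inequality shows that each diagonal entry of $|B-A|$ is dominated by the corresponding entry of $|1_d-A^2|$. Hence $|1_d-A^2| - |B-A|$ is diagonal with non-negative entries in that basis, so $|B-A| \leq |1_d - A^2|$ in the order of positive semi-definite matrices.

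Finally, for any unitarily invariant norm, combining Proposition \ref{uniprop}(2) with Proposition \ref{uniprop}(3) yields
\[
\lVert B-A\rVert \;=\; \bigl\lVert\,|B-A|\,\bigr\rVert \;\leq\; \bigl\lVert\,|1_d-A^2|\,\bigr\rVert \;=\; \lVert 1_d-A^2\rVert,
\]
which is the asserted bound. The whole proof is essentially: pick the obvious $B$, reduce to a one-variable inequality on the unit circle, and transfer it to matrices via the simultaneous diagonalization.
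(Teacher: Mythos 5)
Your proof is correct and follows essentially the same strategy as the paper's: diagonalize $A$, round each eigenvalue $e^{i\theta_j}$ to its nearest sign $\epsilon_j\in\{\pm1\}$, verify the scalar inequality $|e^{i\theta}-\epsilon|\le|1-e^{2i\theta}|$, and transfer to matrices via Proposition \ref{uniprop}(2) and (3). The only cosmetic difference is that the paper verifies the scalar bound via the factorization $|b_j-a_j|\le|1-a_j||{-1}-a_j|=|1-a_j^2|$ (using $|1\pm a_j|\ge1$ on the appropriate half-circle), whereas you compute $2(1-|\cos\theta|)$ against $4(1-\cos^2\theta)$ directly; both are immediate, and your computation even exhibits the extra factor of $1/\sqrt2$ explicitly.
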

\begin{proof}
By unitary invariance, we may assume that $A$ is a diagonal matrix, which we denote $A=\diag(a_1,\ldots,a_d)$. Let
\[b_j:=\begin{cases}
1,&\text{if }\Ra(a_j)\geq 0,\\
-1,&\text{if }\Ra(a_j)<0.
\end{cases}\]
One readily sees that $|b_j-a_j|\leq |1-a_j||-1-a_j|= |1-a_j^2|$.
Thus $B=\diag(b_1,\ldots,b_d)$ is a self-adjoint unitary and by Proposition \ref{uniprop} (3)
\[\Vert B-A\Vert= \Vert |B-A|\Vert \leq \Vert |1_d-A^2|\rVert=\Vert 1_d-A^2\Vert.\qedhere\]
\end{proof}

A second property that is important to us is \emph{submultiplicativity}, that is, $\lVert AB\rVert\leq \lVert A\rVert\lVert B\rVert$ for all $A,B\in {\rm M}_d(\mathbb C)$. This property turns $( {\rm M}_d(\mathbb C),\lVert\cdot\rVert)$ into a Banach algebra. The operator norm and the Frobenius norm enjoy this property, but the normalized Hilbert-Schmidt norm does not.

\subsection{Ultraproducts}\label{ultra} We will need the definition of the ultraproduct of Banach spaces and metric groups, respectively. 
First, let $(V_n)_{n\in\NN}$ be a sequence of Banach spaces. Consider the $\ell^\infty$-direct product $\prod_{n\in\NN} V_n$ (i.e. the Banach space of bounded sequences $(v_n)_{n\in\NN}$ with $v_n\in V_n$) and the closed subspace of nullsequences
\[I:=\left\{(v_n)_{n\in\NN}\in\prod_{n\in\NN}V_n\mid \lim_{n\to\uf}\Vert v_n\Vert_{V_n}=0\right\}.\]
We define the \emph{ultraproduct Banach space} by
\[\prod_{n\to\uf}(V_n,\lVert\cdot\rVert_{V_n}):=\prod_{n\in\NN} V_n\Big/ I.\]
As the name suggests, the ultraproduct Banach space is itself a Banach space with the norm induced by $\Vert (x_n)_{n\in\NN}\Vert=\lim_{n\to\uf}\Vert x_n\Vert_{V_n}$ for $(x_n)_{n\in\NN}\in\prod_{n\in\NN}V_n$. Moreover, if the $V_n$ are all Banach algebras, $C^*$-algebras or Hilbert spaces, so is the ultraproduct.

Let $(G_n)_{n\in\NN}$ be a family of groups, all equipped with bi-invariant metrics $d_n$.
In this case, the subgroup
\[N=\left\{(g_n)_{n\in\NN}\in\prod_{n\in\NN}G_n\mid \lim_{n\to\uf}d_n(g_n,1_{G_n})=0\right\}\]
of the direct product $\prod_{n\in\NN} G_n$
is normal, so we can define the \emph{metric ultraproduct}
\[\prod_{n\to\uf}(G_n,d_n):=\prod_{n\in\NN}G_n\Big/N.\]
Note that, in contrast to the Banach space definition \emph{we do not require the sequences to be bounded}. It is worth noting (albeit not relevant for our purposes) that the bi-invariant metric
\[d((g_n)_{n\in\NN},(h_n)_{n\in\NN})=\lim_{n\to\uf}\min\{d_n(g_n,h_n),1\},\qquad g_n,h_n\in G_n\]
on $\prod_{n\in\NN} G_n$ induces a bi-invariant metric on $\prod_{n\to\uf}(G_n,d_n)$.

The above definitions will be relevant to us in the following setting. Let $(k_n)_{n\in\NN}$ be a sequence of natural numbers and consider the family of matrix algebras $V_n:= {\rm M}_{k_n}(\mathbb C)$ equipped with some unitarily invariant, submultiplicative norms $\lVert\cdot\rVert_n$. We usually omit the index and denote all the norms by $\lVert\cdot\rVert$. Let $G_n:={\rm U}(k_n)$, equipped with the metrics $\dist_{\lVert\cdot\rVert_n}(g,h)=\lVert g-h\rVert_n,g,h\in G_n$ induced from the norms.
We consider the ultraproduct Banach space 
\[\Matuf:=\prod_{n\to\uf}( {\rm M}_{k_n}(\mathbb C),\lVert\cdot\rVert_n),\]
and the metric ultraproduct
\[\Uniuf:=\prod_{n\to\uf}({\rm U}(k_n),\dist_{\lVert\cdot\rVert_n}).\]
By submultiplicativity of the norms, we see that if $u_n\in{\rm U}(k_n)$ with $\lim_{n\to\uf}\dist_{\lVert\cdot\rVert_n}(u_n,1_{k_n})=0$ then
\[\lim_{n\to\uf}\Vert u_nT_n-T_n\Vert_n\leq \lim_{n\to\uf}\lVert u_n-1_{k_n}\rVert_n\cdot\lVert T_n\rVert_n=0,\]
for all bounded sequences $T_n\in {\rm M}_{k_n}(\mathbb C)$. Thus left multiplication by $u_n$ induces a left action of $\Uni_\uf^{\lVert\cdot\rVert}$ on ${\mathrm M}_{\uf}^{\lVert\cdot\rVert}$. By unitary invariance of the norms, we see that this action is isometric. Similarly, we have a right action by right multiplication and another left action by conjugation -- both of them isometric.

\subsection{Asymptotic homomorphisms} \label{apphom}
In this section, we let $\Gamma=\langle S\mid R\rangle$ be a fixed finitely presented group (i.e.\ $S$ and $R$ are finite) and we let $\gclass$ be a class of groups, all equipped with bi-invariant metrics. Any map $\ph\colon S\to G$, for some $G\in \gclass$, uniquely determines a homomorphism $\mathbb F_S\to G$ which we will also denote by $\ph$.

\begin{defi}
Let $G\in\gclass$ and let $\ph,\psi\colon S\to G$ be maps. The \emph{defect} of $\ph$ is defined by
\[\defect(\ph):=\max_{r\in R}d_G(\ph(r),1_G).\]
The \emph{distance} between $\ph$ and $\psi$ is defined by
\[\dist(\ph,\psi)=\max_{s\in S}d_G(\ph(s),\psi(s)).\]
The \emph{homomorphism distance} of $\ph$ is defined by
\[\homdist(\ph):=\inf_{\pi\in\Hom(\Gamma,G)}\dist(\ph,\pi|_S).\]
\end{defi}

\begin{defi}
A sequence of maps $\ph_n\colon S\to G_n$, for $G_n\in\gclass$, is called an \emph{asymptotic homomorphism} if $\lim_{n\to\uf}\defect(\ph_n)=0$.
\end{defi}
We will mainly be concerned with finite dimensional asympotic \emph{representations}, that is, asymptotic homomorphisms with respect to the class of unitary groups ${\rm U}(n)$ on finite dimensional Hilbert spaces, equipped with the metrics
\[d(T,S)=\Vert T-S\Vert,\qquad T,S\in{\rm U}(n),\]
coming from some family of unitarily invariant norms $\lVert\cdot\rVert$. 
The class of finite-dimensional unitary groups with metrics coming from$\lVert\cdot\rVert_{\mathrm{op}}$, $\lVert\cdot\rVert_{\mathrm{Frob}}$ and $\lVert\cdot\rVert_{\mathrm{HS}}$ are denoted $\Uni_{\mathrm{op}}$, $\Uni_{\mathrm{Frob}}$, and $\Uni_{\mathrm{HS}}$.

We might also find the need to quantify the above definition.
\begin{defi}
Let $\eps>0$ and $G\in\gclass$. An \emph{$\eps$-almost homomorphism} is a map $\ph\colon S\to G$ such that 
$\defect(\ph)\leq \eps.$
\end{defi}

In the literature, there are many different (inequivalent) notions of ``almost'', ``asymptotic'' and ``quasi-'' homomorphisms. If one would be precise, the above notion of asymptotic homomorphism could be called a \emph{local, discrete} asymptotic homomorphism. Local, since we are only interested in the behaviour of $\ph_n$ on the set of relations $R$ (compare with the \emph{uniform} situation \cite{MR3038548}) and discrete, because the family of homomorphisms are indexed by the natural numbers.

\begin{defi}
Let $G_n\in\gclass,n\in \NN$. Two sequences $\ph_n,\psi_n\colon S\to G_n$ are called \emph{(asymptotically) equivalent} if $\lim_{n\to\uf}\dist(\ph_n,\psi_n)=0.$
\end{defi}
If an asymptotic homomorphism $(\ph_n)_{n\in\NN}$ is equivalent to a sequence of genuine representations, we call $(\ph_n)_{n\in\NN}$ \emph{trivial} or \emph{liftable}.

We will now come to two central notions that we study in this paper, the notion of stability and approximability by a class of metric groups.

\begin{defi}\label{stabdef}
The group $\Gamma$ is called \emph{$\gclass$-stable} if all asymptotic homomorphisms are equivalent to a sequence of homomorphisms, that is,
\[\lim_{n\to\uf}\homdist(\ph_n)=0,\]
for all $\ph_n\colon S\to G_n$, $G_n\in\gclass,n\in\NN$ with $\lim_{n\to\uf}\defect(\ph_n)=0$.
\end{defi}

\begin{defi}
A finitely presented group $\Gamma=\langle S\mid R \rangle$ is called \emph{$\gclass$-approximated}, if there exists an asymptotic homomorphism $\ph_n\colon S \to G_n$, $G_n\in\gclass,n\in\NN$ such that 
\[\lim_{n\to\uf}d_n(\ph_n(x),1_{G_n})>0,\qquad\text{for all\ } x\in  \mathbb F_S\backslash \llangle R\rrangle.\]
\end{defi}
We will be mainly concerned with $\Uni_{\rm Frob}$-approximation and $\Uni_{\rm Frob}$-stability in this paper and,  for convenience, we will often just speak about Frobenius-approximation and Frobenius-stability in this context.

\begin{defi}
A group $\Gamma$ is called \emph{residually $\gclass$} if for all $x\in\Gamma\backslash\{1_\Gamma\}$ there is a homomorphism $\pi\colon \Gamma\to G$ for some $G\in\gclass$ such that $\pi(x)\neq 1_G$.
\end{defi}

The following proposition (see \cite{glebskyrivera2008sofic} or \cite{MR3350728}) is evident from the definitions, nevertheless a central observation in our work. 

\begin{prop} Let $\Gamma$ be a finitely presented. If $\Gamma$ is $\gclass$-stable and $\gclass$-approximated group, then it must be residually $\gclass$. In particular, if the class $\gclass$ consists of finite-dimensional unitary groups, any finitely presented, $\gclass$-stable and $\gclass$-approximated group is residually finite.
\end{prop}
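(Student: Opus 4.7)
The plan is to combine the separating property of the approximation with the closeness to an honest homomorphism furnished by stability, and then, in the unitary case, invoke Malcev's theorem on residual finiteness of finitely generated linear groups.

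First, fix a presentation $\Gamma = \langle S \mid R\rangle$ and pick any nontrivial $g \in \Gamma$, represented by some $x \in \mathbb{F}_S \setminus \llangle R \rrangle$. By $\gclass$-approximation, there is an asymptotic homomorphism $\varphi_n \colon S \to G_n$ with $G_n \in \gclass$ and
\[
c := \lim_{n\to\uf} d_n(\varphi_n(x), 1_{G_n}) > 0.
\]
By $\gclass$-stability applied to $(\varphi_n)_{n\in\NN}$, there exist genuine homomorphisms $\pi_n \colon \Gamma \to G_n$ with $\lim_{n\to\uf}\dist(\varphi_n, \pi_n|_S) = 0$.

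Next, I want to upgrade closeness on the generators $S$ to closeness at the specific word $x$. Write $x = s_{i_1}^{\varepsilon_1}\cdots s_{i_k}^{\varepsilon_k}$ in $\mathbb{F}_S$. Since $\varphi_n$ extends to a homomorphism on $\mathbb{F}_S$ and bi-invariant metrics satisfy $d_n(as^{\pm 1}b, at^{\pm 1}b) = d_n(s,t)$, a standard telescoping estimate yields
\[
d_n\bigl(\varphi_n(x), \pi_n(x)\bigr) \leq k \cdot \dist(\varphi_n|_S, \pi_n|_S) \xrightarrow[n\to\uf]{} 0,
\]
where the length $k$ depends only on the fixed word $x$. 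Combined with the triangle inequality and $c > 0$, this gives $\lim_{n\to\uf} d_n(\pi_n(x), 1_{G_n}) = c > 0$, so for most $n \in \NN$ we have $\pi_n(g) \neq 1_{G_n}$. In particular, some $\pi_n$ witnesses that $\Gamma$ is residually $\gclass$.

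For the second assertion, suppose $\gclass \subset \{\Uni(k) \mid k \in \NN\}$. Since $\Gamma$ is finitely generated, so is each image $\pi_n(\Gamma) \leq \Uni(k_n) \subset \mathrm{GL}(k_n,\CC)$, and by Malcev's theorem every finitely generated linear group is residually finite. Given $g \neq 1_\Gamma$, pick $n$ as above so that $\pi_n(g) \neq 1$; then choose a finite quotient $q \colon \pi_n(\Gamma) \to F$ of the linear group $\pi_n(\Gamma)$ separating $\pi_n(g)$ from the identity. The composition $q \circ \pi_n \colon \Gamma \to F$ is the desired homomorphism to a finite group that is nontrivial on $g$. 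I expect the main (mildly subtle) point to be the telescoping step, which uses that the word length $k$ is a constant independent of $n$, so the generator-wise closeness propagates to a single fixed word without blowing up along the ultrafilter; once this is in place, the rest is essentially bookkeeping together with the appeal to Malcev.
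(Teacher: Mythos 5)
Your proof is correct, and it fills in exactly the details the paper leaves out (the paper merely calls the proposition ``evident from the definitions'' and cites references). The two key points you flag — that the telescoping constant $k$ is the length of a fixed word and hence independent of $n$, and that bi-invariance gives $d_n(a^{-1},b^{-1})=d_n(a,b)$ so inverses of generators cause no trouble — are precisely the content that makes the claim evident, and your appeal to Malcev for the unitary case is the standard and intended route.
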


We finish this section with a basic lemma. The important part in the statement of the lemma is that $K_r$ does not depend on $\ph$.
\begin{lemma}\label{relfact}
For all $r\in\llangle R\rrangle$ there is a constant $K_r$ such that for all groups $G$ with a bi-invariant metric and all maps $\ph\colon S\to G$ it holds that
\[d_{G}(\ph(r),1_{G}) \leq K_r  \defect(\ph).\]
\end{lemma}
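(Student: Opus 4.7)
The plan is to exploit the two standard consequences of bi-invariance: conjugation preserves distances to the identity, and $d_G(ab,1_G) \leq d_G(a,1_G)+d_G(b,1_G)$. The latter follows because, by right-invariance, $d_G(ab,1_G)=d_G(a,b^{-1})$ and then by the triangle inequality and bi-invariance $d_G(a,b^{-1})\leq d_G(a,1_G)+d_G(1_G,b^{-1})=d_G(a,1_G)+d_G(b,1_G)$. Similarly, bi-invariance gives $d_G(wgw^{-1},1_G)=d_G(g,1_G)$ and $d_G(g^{-1},1_G)=d_G(g,1_G)$.

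Next, because $r\in\llangle R\rrangle$ there exist $k\in\NN$, words $w_1,\ldots,w_k\in\mathbb F_S$, relations $r_1,\ldots,r_k\in R$ and signs $\eps_1,\ldots,\eps_k\in\{\pm 1\}$ with
\[r=\prod_{i=1}^k w_i r_i^{\eps_i} w_i^{-1}\qquad\text{in }\mathbb F_S.\]
Crucially, the integer $k$ and the words $w_i$ depend only on $r$ (and on the presentation), not on $G$ or $\ph$. Applying the homomorphism $\ph\colon\mathbb F_S\to G$ yields $\ph(r)=\prod_{i=1}^k \ph(w_i)\ph(r_i)^{\eps_i}\ph(w_i)^{-1}$.

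The final step is to combine the two ingredients above: for each factor, bi-invariance gives
\[d_G\bigl(\ph(w_i)\ph(r_i)^{\eps_i}\ph(w_i)^{-1},1_G\bigr)=d_G\bigl(\ph(r_i)^{\eps_i},1_G\bigr)=d_G(\ph(r_i),1_G)\leq \defect(\ph),\]
and iterating the subadditivity inequality $d_G(ab,1_G)\leq d_G(a,1_G)+d_G(b,1_G)$ over the $k$ factors produces $d_G(\ph(r),1_G)\leq k\,\defect(\ph)$. Setting $K_r:=k$ finishes the proof.

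There is essentially no obstacle; the only point worth flagging is that $K_r$ must be extracted from a \emph{fixed} expression of $r$ as a product of conjugates of elements of $R^{\pm1}$, which is legitimate because such an expression exists by definition of $\llangle R\rrangle$ and does not involve $\ph$ or $G$. The uniformity over all $(G,\ph)$ is then automatic from bi-invariance.
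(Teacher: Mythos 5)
Your proof is correct and follows essentially the same route as the paper's: express $r$ as a finite product of conjugates of elements of $R^{\pm1}$, use bi-invariance to bound each conjugate factor by $\defect(\ph)$, and iterate the subadditivity of $d_G(\cdot,1_G)$ to get $d_G(\ph(r),1_G)\leq k\,\defect(\ph)$. Your version is slightly more explicit in spelling out how subadditivity follows from bi-invariance and the triangle inequality, and in flagging that $K_r$ depends only on a fixed chosen expression of $r$, but this is the same argument.
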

\begin{proof}
If $r\in\llangle R\rrangle$ we can determine $r_1,\ldots,r_k\in R\cup R^{-1}$ and $x_1,\ldots,x_k\in\mathbb F_S$ such that
\[r=x_1r_1x_1^{-1} x_2r_2x^{-1}_2\cdots x_kr_kx_k^{-1}.\]
Note that by bi-invariance \[d_{G}(\ph(r_j),1_{G})=d_{G}(\ph(r^{-1}_j),1_{G})\leq \defect(\ph)\] for all $j$.
Thus, using bi-invariance again, we get
\begin{align*}
d_{G}(\ph(r),1_{k_n})&=d_{G}(\ph(x_1r_1x_1^{-1})\cdots \ph(x_kr_kx^{-1}_k),1_{G})
\\&\leq \sum_{j=1}^kd_{G}(\ph(x_j)\ph(r_j)\ph(x_j)^{-1},1_{G})
\\&=\sum_{j=1}^k d_{G}(\ph(r_j),1_{G})
\\&\leq k\cdot \defect(\ph),
\end{align*}
So letting $K_r=k$, we are done.
\end{proof}

\subsection{Group cohomology}\label{sec:cohom}
For convenience, we recall one construction of group cohomology. We primarily need the second cohomology of a group with coefficients in a unitary representation, but for completeness, we give a more general definition. Let $\Gamma$ be any group and let $V$ be a $\Gamma$-module, i.e. an abelian group together with a (left) action $\pi$ of $\Gamma$ on $V$. We consider the chain complex $C^n(\Gamma,V),n\geq 1$, which is the set of functions from $\Gamma^n$ to $V$ together with the coboundary operator,
\[d=d^{n}\colon C^{n}(\Gamma,V)\to C^{n+1}(\Gamma,V),\]
defined by
\begin{align*}
d^{n}(f)(g_1,\ldots,g_{n+1})&=\pi(g_1)f(g_2,\ldots,g_{n+1})\\&+\sum_{j=1}^{n} (-1)^jf(g_1,\ldots,g_jg_{j+1},\ldots,g_{n+1})\\&+(-1)^{n+1}f(g_1,\ldots,g_{n}).
\end{align*}
We also let $C^0(\Gamma,V)=V$ and $d^0(v)(g)=\pi(g)v-v$ for $v\in V,g\in\Gamma$.
Thus, for $n\geq 0$ we define the \emph{$n$-coboundaries} to be $B^n(\Gamma,V)=\mathrm{Im}(d^{n-1})$ (with $B^0(\Gamma,V)=\{0\}$) and the \emph{$n$-cocycles} to be $Z^n(\Gamma,V)=\ker(d^{n})$. One checks that $B^n(\Gamma,V)\del Z^n(\Gamma,V)$, so we can define the \emph{$n$-th cohomology} to be
\[H^n(\Gamma,V)=Z^n(\Gamma,V)/B^n(\Gamma,V).\]

Recall that given an extension of groups
\[1\to V\stackrel{i}{\to} \hat\Gamma\stackrel{q}{\to} \Gamma\to 1,\]
where $V$ is abelian, there is an action of $\Gamma$ on $V$ induced by the conjugation action of $\hat\Gamma$ on $i(V)$. Fixing any section $\sigma\colon \Gamma\to\hat\Gamma$ (with $\sigma(1_\Gamma)=1_{\hat\Gamma}$) of the quotient $q$ we can define a map map $f\colon \Gamma\times \Gamma\to V$ as the solution to \[i(f(g,h))=\sigma(g)\sigma(h)\sigma(gh)^{-1},\] for $g,h\in\Gamma$.
It is straightforward to check that $f\in Z^2(\Gamma,V,\pi)$ and $f\in B^2(\Gamma,V,\pi)$ exactly when the extension \emph{splits}, i.e.\ there is a homomorphism $p\colon \Gamma\to\hat\Gamma$ such that $q\circ p=\id_\Gamma$.

Assume now that $\Gamma$ is countable and $V$ is a Banach space with norm $\lVert\cdot\rVert$, then we can define a separating family of semi-norms on $C^n(\Gamma,V)$ by
\begin{equation} \label{eqdefnorm}
\Vert f\Vert_F=\max_{g\in F}\Vert f(g)\Vert,\end{equation}
for $f\in C^n(\Gamma,V)$ and finite $F\del \Gamma^n$. It is easy to see that with respect to this family, $C^n(\Gamma,V)$ is a Fréchet space (one can even take $\Vert \cdot\Vert_{\{x\}},x\in \Gamma^n$ as separating family) and if $\Gamma$ acts on $V$ by isometries, the map $d^{n}$ is bounded.

\section{Some examples of non Frobenius-stable groups} \label{stabapp}

Part of our aim is to provide a large class of Frobenius-stable groups, but let us start out by giving examples of well-known groups that are \emph{not} stable.
Specifically, we show that $\Z^2$ and the Baumslag-Solitar group $\mathrm{BS}(2,3)$  are not Frobenius-stable by giving concrete examples of asymptotic representations that are not equivalent to genuine representations. We also exploit the latter example to provide an example of an Frobenius-approximated, non-residually finite group, see Section \ref{grexample}.

\subsection{$\Z^2$ is not Frobenius-stable}
In \cite{V1}, Voiculescu proved that the matrices
\[A_n=\begin{pmatrix}
1&&&&\\
&\omega_n&&&\\
&&\omega_n^2&&\\
&&&\ddots&\\
&&&&\omega_n^{n-1}
\end{pmatrix},\qquad B_n=\begin{pmatrix}
0&& &0&1\\
1&0&&&0\\
&1&&&\\
&&\ddots&&\\
&&&1&0
\end{pmatrix}\in {\rm U}(n),\]
where $\omega_n:=\exp(\tfrac{2\pi i}{n}),n\in\NN$, define a non-trivial $\opnorm\cdot$-asymptotic representation of $\Z^2=\ls{a,b,\mid aba^{-1}b^{-1}}$ by $\ph_n(a)=A_n$ and $\ph_n(b)=B_n$. More precisely,
\[\defect_{\opnorm\cdot}(\ph_n)=\opnorm{A_nB_nA_n^*B_n^*-1_n}=|\omega_n-1|=O_\uf\big(\tfrac{1}{n}\big),\]
but
\[\homdist_{\opnorm\cdot}(\ph_n)\geq \sqrt{2-|1-\omega_n|}-1\]
(see also \cite{exelloring, V1}).
By the inequalities $\opnorm T\leq \frob T\leq n^{1/2}\opnorm T$ for $T\in{\rm U}(n)$, we conclude that
\[\defect_{\frob\cdot}(\ph_n)=O_\uf(n^{-1/2}),\]
and
\[\homdist_{\frob\cdot}(\ph_n)\geq \sqrt{2-|1-\omega_n|}-1,\]
so $\ph_n$ is also a non-trivial $\frob\cdot$-asymptotic representation. In particular $\ZZ^2$ is neither $\Uni_{\mathrm{op}}$- nor Frobenius-stable. It is worth noting that $\ZZ^2$ actually \emph{is} $\Uni_{\mathrm{HS}}$-stable, see e.g.\ \cite{glebcom} for a quantitative proof.

\subsection{$\mathrm{BS}(2,3)$ is not Frobenius-stable}
We now turn our attention to the Baumslag-Solitar group $\mathrm{BS}(2,3)=\ls{a,b\mid b^{-1}a^2ba^{-3}}$, see \cite{MR0142635} for the original reference.
By definition, the generators satisfy the equation
\begin{equation}\label{eq_BS}
b^{-1}a^2b=a^3.
\end{equation}
It is also well known and not hard to check that the generators do \emph{not} satisfy
\begin{equation}\label{eq_comm}
ab^{-1}ab=b^{-1}aba.
\end{equation} 
Indeed, this follows easily from the description of ${\rm BS}(2,3)$ as an HNN-extension of $\mathbb Z$.
On the other hand, we recall the following.
\begin{prop}[Baumslag-Solitar \cite{MR0142635}]\label{prop1}
Let $\Gamma$ be a residually finite group. If $a,b\in \Gamma$ satisfy \eqref{eq_BS}, then they also satisfy \eqref{eq_comm}.
\end{prop}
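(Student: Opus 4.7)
The plan is to reduce the statement to the case of finite groups using residual finiteness, and then to exploit the fact that in a finite group the relation $b^{-1}a^2b = a^3$ forces $a$ to have odd order, which in turn makes conjugation by $b$ act on $a$ by a well-defined power.

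More precisely, the first step is the residual finiteness reduction: suppose towards contradiction that $ab^{-1}ab \ne b^{-1}aba$ in $\Gamma$. Setting $x := (ab^{-1}ab)(b^{-1}aba)^{-1} \ne 1_\Gamma$, residual finiteness supplies a homomorphism $\pi \colon \Gamma \to F$ into a finite group $F$ with $\pi(x) \ne 1_F$. The elements $\pi(a), \pi(b) \in F$ still satisfy \eqref{eq_BS}, so it suffices to establish the conclusion for any pair of elements in a finite group satisfying \eqref{eq_BS}.

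The second step handles the finite case. Let $a, b$ lie in a finite group $F$ with $b^{-1}a^2b = a^3$. Then $a^2$ and $a^3$ are conjugate, hence have the same order. If $n$ denotes the order of $a$, this gives $n/\gcd(2,n) = n/\gcd(3,n)$, forcing $\gcd(2,n) = \gcd(3,n) = 1$; in particular $n$ is odd. Since $2$ is therefore invertible modulo $n$, the element $a^2$ generates $\langle a \rangle$, and consequently the automorphism of $\langle a\rangle$ induced by conjugation by $b$ (which sends $a^2$ to $a^3 \in \langle a\rangle$, so preserves $\langle a\rangle$) is determined by its value on $a^2$. Writing $2k \equiv 1 \pmod n$, we get
\[
b^{-1}ab \;=\; b^{-1}(a^2)^k b \;=\; (b^{-1}a^2 b)^k \;=\; a^{3k} \;=\; a^{m}
\]
for $m := 3k \bmod n$.

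The third step is the direct verification: with $b^{-1}ab = a^m$ in hand,
\[
ab^{-1}ab \;=\; a \cdot (b^{-1}ab) \;=\; a \cdot a^m \;=\; a^{m+1} \;=\; a^m \cdot a \;=\; (b^{-1}ab)\cdot a \;=\; b^{-1}aba,
\]
so \eqref{eq_comm} holds in $F$. Combined with the first step, this yields the proposition. The only nontrivial point is the second step, and even there the main observation, that $a$ must have odd order, is immediate from comparing orders of $a^2$ and $a^3$; after that the argument is entirely formal.
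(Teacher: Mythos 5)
Your proof is correct and follows essentially the same route as the paper: reduce to a finite quotient by residual finiteness, observe that conjugacy of $a^2$ and $a^3$ forces the order of $a$ to be odd, and then deduce that $b^{-1}ab$ lies in $\langle a\rangle$ (hence commutes with $a$) because $a$ is a power of $a^2$. You merely spell out the computation $b^{-1}ab=a^{3k}$ explicitly where the paper leaves it implicit.
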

\begin{proof}
Indeed, if $a$ has finite order and $a^2$ is conjugate to $a^3$, then the order of $a$ cannot be even. Thus, $b^{-1}ab$ is a power of $b^{-1}a^2b=a^3$. We conclude that $a$ and $b^{-1}ab$ commute.
\end{proof}

By Mal'cev's Theorem we immediately obtain the following consequence.
\begin{cor}\label{cor2}
Let $a,b$ be unitary matrices. If $a,b\in \Gamma$ satisfy \eqref{eq_BS}, then they also satisfy \eqref{eq_comm}.
\end{cor}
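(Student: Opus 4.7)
The plan is to reduce Corollary \ref{cor2} to Proposition \ref{prop1} by invoking Mal'cev's theorem, which is exactly what the sentence preceding the corollary announces. Concretely, given unitary matrices $a, b \in \Uni(n)$ satisfying \eqref{eq_BS}, I would consider the subgroup $\Gamma := \langle a, b \rangle \leq \Uni(n) \leq \mathrm{GL}_n(\CC)$. This $\Gamma$ is a finitely generated linear group, so Mal'cev's theorem guarantees that it is residually finite.

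Once $\Gamma$ is known to be residually finite, Proposition \ref{prop1} applies directly to the pair $a, b \in \Gamma$ and yields the relation \eqref{eq_comm}, finishing the proof. So the plan is just two lines: embed $a, b$ into a finitely generated linear (hence residually finite) group, then quote Proposition \ref{prop1}.

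I do not expect a genuine obstacle here, since both ingredients are available off the shelf. The only thing to double-check is that one is allowed to pass from the two matrices to the group they generate without losing the hypothesis~\eqref{eq_BS}; but this hypothesis is a group-theoretic relation in the generators, so it automatically holds inside $\langle a, b\rangle$. One could in principle formulate the statement without mentioning unitarity — any matrix group would do — but in our context the unitary case is what is needed, and the argument is identical.
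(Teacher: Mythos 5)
Your argument is exactly the one the paper intends: the corollary is introduced with the phrase ``By Mal'cev's Theorem,'' and the implicit step is precisely to take $\Gamma=\langle a,b\rangle\leq \mathrm{GL}_n(\CC)$, note it is finitely generated and linear hence residually finite, and then invoke Proposition~\ref{prop1}. Correct and matches the paper's reasoning.
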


This last Corollary can also be proven directly by linear algebra methods, see \cite{fritz} where some quantitative aspects of operator-norm aproximability of $\mathrm{BS}(2,3)$ were studied.
By Corollary~\ref{cor2}, in order to show that $\mathrm{BS}(2,3)$ is non-stable it suffices to find a sequence of pairs of unitary matrices that $\frob\cdot$-asymptotically satisfy Equation \eqref{eq_BS} but are far from satisfying
Equation \eqref{eq_comm}. The study of approximation properties of $\mathrm{BS}(2,3)$ goes back to R\u adulescu \cite{MR2436761}, where the focus was more on approximation in the (normalized) Hilbert-Schmidt norm. We are now going to prove the following result.

\begin{theorem}\label{th_bs_approx}
The group ${\rm BS}(2,3)$ is not Frobenius-stable.
\end{theorem}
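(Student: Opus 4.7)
The strategy is to exhibit an asymptotic representation $(A_n,B_n)\in\Uni(k_n)$ of $\mathrm{BS}(2,3)$ whose Frobenius-defect of \eqref{eq_BS} tends to zero, yet which cannot be equivalent to any sequence of honest homomorphisms. By Corollary~\ref{cor2}, every honest homomorphism $\pi\colon\mathrm{BS}(2,3)\to\Uni(k)$ satisfies $\pi(w)=1_k$ for the word $w=ab^{-1}ab\cdot a^{-1}b^{-1}a^{-1}b$. A standard telescoping argument, using bi-invariance and the unitary-invariance of $\frob\cdot$, yields $\frob{\ph(w)-\psi(w)}\leq|w|\dist(\ph,\psi)$ for any two maps $\ph,\psi\colon\{a,b\}\to\Uni(k)$. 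Hence, if an asymptotic homomorphism $(\ph_n)$ were equivalent to exact homomorphisms, then $\frob{\ph_n(w)-1_{k_n}}\to 0$, or equivalently $\frob{[A_n,B_n^{-1}A_nB_n]}\to 0$. It therefore suffices to produce $(A_n,B_n)\in\Uni(k_n)$ with
\[\frob{B_n^{-1}A_n^2B_n-A_n^3}\longrightarrow 0\qquad\text{while}\qquad\frob{[A_n,B_n^{-1}A_nB_n]}\geq c>0\]
for some constant $c$ independent of $n$.

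To build such a pair, the plan is to follow the template of Voiculescu's $\ZZ^2$-example recalled above, adapted to the Baumslag--Solitar relation. The guiding infinite-dimensional model is the exact unitary representation of $\mathrm{BS}(2,3)$ on $L^2(\R)$ where $a$ acts as multiplication by $e^{ix}$ and $b$ as the dilation $\xi\mapsto\sqrt{2/3}\,\xi(2x/3)$; one checks that $B^{-1}AB$ equals multiplication by $e^{i(3/2)x}$, so $B^{-1}A^2B=A^3$ exactly. Discretizing this picture onto an $n$-point lattice on the circle suggests taking $A_n=\diag(\zeta_n^j)_{j=0}^{n-1}$ for $\zeta_n=e^{2\pi i/n}$ and $B_n$ a unitary approximately implementing the ``dilation'' $j\mapsto(3\cdot 2^{-1})j\pmod{n}$ (well-defined whenever $\gcd(n,6)=1$). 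A pure permutation $B_n$ of this form would intertwine $A_n^2$ and $A_n^3$ exactly but would also lie in the commutant of $A_n$, trivializing the commutator. To break this collapse one should replace the permutation by a unitary that is genuinely off-diagonal in the $A_n$-eigenbasis; working in dimension $k_n=nm$ (so that each eigenvalue of $A_n$ acquires multiplicity $m\geq 2$) opens up an $m\times m$-block structure in the commutant of $A_n$ within which such off-diagonal pieces live.

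The verification then reduces to two direct Frobenius-norm computations: a bound on the intertwining error for $A_n^2\mapsto A_n^3$, and a lower bound on the commutator $[A_n,B_n^{-1}A_nB_n]$ coming from the off-diagonal perturbation of $B_n$. The main obstacle is the inherent tension between these two requirements: the Frobenius norm exceeds the operator norm by a factor of $\sqrt{k_n}$, so making the Baumslag--Solitar defect small in Frobenius demands an intertwining tight at the level of individual matrix entries, whereas a Frobenius-macroscopic commutator requires a non-commutative perturbation of $B_n$ whose Frobenius size does \emph{not} vanish. Threading this needle is the crux of the construction: in the simple-spectrum regime, any approximate square root of $A_n^3$ is forced close to a diagonal matrix in the $A_n$-basis and therefore asymptotically commutes with $A_n$, so one must exploit the multiplicities introduced above to allow a genuinely off-diagonal square root whose discrepancy survives in the Frobenius commutator while remaining invisible to the Frobenius Baumslag--Solitar defect.
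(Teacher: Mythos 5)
Your reduction is correct and matches the paper's framework exactly: by Corollary~\ref{cor2}, honest unitary representations of $\mathrm{BS}(2,3)$ kill the word $w=ab^{-1}aba^{-1}b^{-1}a^{-1}b$, and the telescoping estimate $\frob{\ph(w)-\psi(w)}\leq |w|\dist(\ph,\psi)$ (valid for $\frob\cdot$ by unitary invariance) shows that it suffices to produce $A_n,B_n\in\Uni(k_n)$ with $\frob{B_n^{-1}A_n^2B_n-A_n^3}\to 0$ while $\frob{A_nB_n^{-1}A_nB_n-B_n^{-1}A_nB_nA_n}$ stays bounded away from $0$. This is precisely how the paper sets up its lemma.

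However, the proof then stops short of its goal. You never actually define $B_n$ (nor the multiplicity $m$ and the perturbation that makes it genuinely off-diagonal), and you never carry out the two Frobenius estimates. You explicitly acknowledge that the construction ``is the crux'' and that ``threading this needle'' is the main difficulty, but threading it is precisely the content of the theorem's proof; everything preceding it is soft. The $L^2(\RR)$ model (multiplication by $e^{ix}$ and dilation by $2/3$) is a reasonable heuristic, and the observation that a pure permutation $B_n$ intertwining $j\mapsto (3\cdot 2^{-1})j\bmod n$ lands in the commutant of $A_n$ is correct and does point in the direction of the paper's construction. But identifying the tension is not the same as resolving it. The paper resolves it concretely: it works in dimension $6n$ with $\omega=e^{2\pi i/6n}$, exhibits two orthogonal decompositions $\CC^{6n}=\bigoplus_j\cS[j]=\bigoplus_j\cC[j]$ into $6$-dimensional blocks chosen so that $A^2|_{\cS[j]}\approx\omega^{6j}1_6$ and $A^3|_{\cC[j]}\approx\omega^{6j}1_6$, takes $B=\bigoplus_j B_j$ with each $B_j\colon\cC[j]\to\cS[j]$ a fixed block-rotation, and then shows the defect per block is $O(1/n^2)$ (hence $O(1/n)$ after summing) while the commutator contributes $6$ per block (hence $\sqrt{6n}-O(1)$ overall). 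This is exactly the multiplicity-$6$ realization of the idea you gestured at, but your proposal does not reach it. As written, the proof is incomplete: the reduction is fine, the construction that makes it work is absent.
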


The theorem is a direct consequence of the following lemma.
\begin{lemma}
There exist $A_n,B_n\in {\rm U}(6n)$ such that
\begin{itemize}
\item $\frob{B_n^{-1}A_n^2B_n-A_n^3}=O_\uf(\tfrac{1}{n})$
\item $\frob{A_nB_n^{-1}A_nB_n-B_n^{-1}A_nB_nA_n}= \sqrt{6n}-O_\uf(1)$.
\end{itemize}
\end{lemma}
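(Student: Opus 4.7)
The plan is to construct $A_n, B_n \in \mathrm{U}(6n)$ explicitly and then verify both estimates by direct computation.

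A natural first attempt takes $A_n$ diagonal with eigenvalues $\{e^{i\theta_k}\}_{k=0}^{6n-1}$ arranged so that the multisets $\{2\theta_k \bmod 2\pi\}$ and $\{3\theta_k \bmod 2\pi\}$ nearly coincide. A concrete way to do this is to place the $\theta_k$ along a finite orbit of the map $\theta \mapsto \tfrac{3}{2}\theta \bmod 2\pi$ and to take $B_n$ to be the cyclic shift along the orbit. With a well-chosen base angle $\theta_0$, the identity $(B_n^{-1} A_n B_n)^2 = A_n^3$ then holds on every diagonal entry except at a single ``wrap-around'' index, from which $\frob{B_n^{-1} A_n^2 B_n - A_n^3} = O_\uf(1/n)$ follows.

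The catch is that such a $B_n^{-1} A_n B_n$ is diagonal in the same basis as $A_n$, so the commutator $[A_n, B_n^{-1} A_n B_n]$ vanishes and the second estimate fails. I would therefore refine $B_n$ so that $C_n := B_n^{-1} A_n B_n$ genuinely mixes eigenvectors of $A_n$ inside each eigenspace of $A_n^3$. The key algebraic observation is that the approximate relation $C_n^2 \approx A_n^3$ determines $C_n$ on each eigenspace $E_\mu$ of $A_n^3$ only up to a unitary involution times $\mu^{1/2}$; we select these involutions so that $C_n|_{E_\mu}$ maximally anti-commutes with $A_n|_{E_\mu}$ while keeping the overall spectrum of $C_n$ equal to that of $A_n$. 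Via the identity
\[
\frob{[A_n,C_n]}^{2} = 12n - 2\,\Ra \tr(C_n^{\ast} A_n^{\ast} C_n A_n),
\]
a bound $\Ra \tr(C_n^{\ast} A_n^{\ast} C_n A_n) \leq 3n + O(\sqrt{n})$ then yields $\frob{A_n B_n^{-1} A_n B_n - B_n^{-1} A_n B_n A_n} \geq \sqrt{6n} - O_\uf(1)$.

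The hard part will be calibrating the two defects simultaneously. By Corollary~\ref{cor2}, any exact solution $B^{-1} A^2 B = A^3$ with $B \in \mathrm{U}(6n)$ already satisfies the commutator relation \eqref{eq_comm}, so the BS defect must be strictly positive -- and yet it must be as small as $O_\uf(1/n)$. The construction therefore has to sit on the narrow boundary between genuine representations and macroscopic failure, and the quantitative balance -- making the BS defect vanish in the limit while keeping the commutator close to its maximum order $\sqrt{6n}$ -- is the technical heart of the proof, in the spirit of R\u adulescu's Hilbert--Schmidt construction in \cite{MR2436761}.
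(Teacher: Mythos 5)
Your high-level plan correctly isolates the tension -- you need a $B_n$ that makes the Baumslag--Solitar defect $O_\uf(1/n)$ while keeping the commutator of $A_n$ with $C_n := B_n^{-1}A_nB_n$ macroscopic -- and your identity
\[
\frob{[A_n,C_n]}^2 = 12n - 2\,\Ra\tr(C_n^*A_n^*C_nA_n)
\]
is correct. You also correctly diagnose that a permutation-type $B_n$ (the $\theta\mapsto\tfrac32\theta$ orbit picture) leaves $C_n$ diagonal in the same basis as $A_n$, killing the commutator. But the argument stops exactly where the content of the lemma lives. The sentence ``we select these involutions so that $C_n|_{E_\mu}$ maximally anti-commutes with $A_n|_{E_\mu}$ while keeping the overall spectrum of $C_n$ equal to that of $A_n$'' is an assertion, not a construction: you do not exhibit the involutions, do not check that they can be chosen consistently with the rigid constraint $\mathrm{spec}(C_n)=\mathrm{spec}(A_n)$ (forced by $C_n=B_n^{-1}A_nB_n$, not a free normalization), do not verify that the modified $B_n$ still gives $\frob{B_n^{-1}A_n^2B_n - A_n^3}=O_\uf(1/n)$, and do not prove the trace bound $\Ra\tr(C_n^*A_n^*C_nA_n)\le 3n+O(\sqrt n)$. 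These four items are essentially the entire proof, and you flag them yourself as the technical heart. The orbit picture is also more delicate than you indicate: if the orbit closes exactly, then $B_n^{-1}A_n^2B_n=A_n^3$ holds exactly, Corollary~\ref{cor2} forces the commutator relation, and the whole game is over; the wrap-around error therefore has to be introduced on purpose and then shown to survive your later modification of $B_n$, neither of which is addressed.

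For contrast, the paper bypasses the orbit idea and works with explicit block subspaces. With $\omega=\exp(2\pi i/6n)$ and $A=\diag(1,\omega,\dots,\omega^{6n-1})$ on $\cH=\CC^{6n}$, one writes $\cH=\oplus_{j}\cS[j]=\oplus_{j}\cC[j]$ into $n$ six-dimensional pieces, with $\cS[j]$ spanned by $v[3j+r],\,v[3j+3n+r]$ ($r=0,1,2$), on which $A^2\approx\omega^{6j}1$, and $\cC[j]$ spanned by $v[2j+2nr],\,v[2j+2nr+1]$ ($r=0,1,2$), on which $A^3\approx\omega^{6j}1$. Then \emph{any} $B=\oplus_j B_j$ with unitary $B_j\colon\cC[j]\to\cS[j]$ already yields $\frob{B^{-1}A^2B-A^3}=O_\uf(1/n)$, and the entire remaining freedom in each $B_j\in U(6)$ is spent on a fixed choice (three $2\times2$ Hadamard blocks) that mixes eigenvectors carrying different limiting eigenvalues of $A$; a $6\times 6$ computation shows each block contributes Frobenius norm exactly $\sqrt6$ to the commutator in the limit, so $\frob{[A,B^{-1}AB]}^2\approx 6n$. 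Packaging the problem into $n$ independent copies of one explicit $U(6)$ model is what makes the two estimates verifiable simultaneously and eliminates the global calibration issue you correctly worried about but did not resolve.
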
    
\begin{proof}
We will omit the index and write $A=A_n$ and $B=B_n$.
Let $\omega=\exp(\tfrac{2\pi}{6n})$ and consider a $6n$-dimensional Hilbert space $\cH$ with orthonormal basis $v[0],v[1],\dots v[6n-1]$.
Define $A\in{\rm U}(6n)$ as $Av[j]=\omega^jv[j]$ (that is, $A$ is $A_{6n}$ from the previous example). We plan to decompose $\cH$ as a direct sum in two ways $\cH=\oplus_{j=0}^{n-1} \cS[j]$ and
 $\cH=\oplus_{j=0}^{n-1} \cC[j]$ such that each $\cS[j]$ and each $\cC[j]$ is $6$-dimensional and the restriction of $A^2$ to $\cS[j]$ as well
as restriction of $A^3$ to $\cC[j]$ act approximately as multiplication by $\omega^{6j}$. 
(The letter $\cS$ stands for square and $\cC$ stands for cube.)   
Then we construct
$B=\oplus_{j=0}^{n-1} B_j$ with $B_j:\cC[j]\to\cS[j]$. Let us start the detailed construction.
Define
\begin{align*}
\cS[j]=\spam\{&v[3j],v[3j+1],v[3j+2],
\\&v[3j+3n],
v[3j+3n+1],v[3j+3n+2]\}
\end{align*}
and
\begin{align*}
\cC[j]=\spam\{&v[2j],v[2j+2n],v[2j+4n],
\\&v[2j+1],v[2j+2n+1],v[2j+4n+1]\}.
\end{align*}
We will use the ordered base of $\cS[j]$ (resp. $\cC[j]$) as it appears in their definitions.
Let $S_j$ (resp. $C_j$) be a restriction of $A$ to $\cS[j]$ (resp. $\cC[j]$).
Observe that \[S_j=\omega^{3j}\diag(1,\omega,\omega^2,-1,-\omega,-\omega^2)\] and
\[C_j=\omega^{2j}\diag(1,\exp(\tfrac{2\pi i}{3}),\exp(\tfrac{4\pi i}{3}),\omega,\omega\exp(\tfrac{2\pi i}{3}),\omega\exp(\tfrac{4\pi i}{3})).\]

Now, let $B\in{\rm U}(6n)$ be any unitary of the form
$B=\oplus_{j=0}^{n-1} B_j$ with unitary $B_j\colon\cC[j]\to\cS[j]$. We claim that
\[\frob{B^{-1}A^2B-A^3}=O_\uf(\tfrac{1}{n})\]

Indeed,
\[\frob{B^{-1}A^2B-A^3}^2=\sum_{j=0}^{n-1}\frob{B_j^{-1}S_j^2B_j-C_j^3}^2,\] 
and we obtain
\begin{align*}
\frob{B_j^{-1}S_j^2B_j-C_j^3}^2&=\frob{B_j^{-1}(S_j^2-\omega^{6j})B_j-(C_j^3-\omega^{6j})}^2
\\&\leq
\frob{S_j^2-\omega^{6j}}^2+\frob{C_j^3-\omega^{6j}}^2
\\&
=2(|1-\omega^2|^2+|1-\omega^4|^2)+3|1-\omega^3|^2=O_\uf(\tfrac{1}{n^2}),
\end{align*}
which entails the claim.
Now, consider the unitary given by the matrix
\[
B_j=\frac{1}{\sqrt{2}}\begin{pmatrix}
1 & 1 & 0 & 0 & 0 & 0 \\
1 &-1 & 0 & 0 & 0 & 0 \\
0 & 0 & 1 & 1 & 0 & 0 \\
0 & 0 & 1 &-1 & 0 & 0 \\
0 & 0 & 0 & 0 & 1 & 1\\
0 & 0 & 0 & 0 & 1 &-1 
\end{pmatrix},
\]
and let \[\tilde S=\diag(1,1,1,-1,-1,-1)\] and 
\[\tilde C =\diag(1,\exp(\tfrac{2\pi i}{3}),\exp(\tfrac{4\pi i}{3}),1,\exp(\tfrac{2\pi i}{3}),\exp(\tfrac{4\pi i}{3})).\]
It is not hard to check that \[\frob{S_j-\omega^{3j}\tilde S}^2=O_\uf(\tfrac{1}{n^2})\] and 
\[\frob{C_j-\omega^{2j}\tilde C}^2=O_\uf(\tfrac{1}{n^2}).\]  Direct calculations show that
$\frob{\tilde C B_j^{-1}\tilde SB_j-B_j^{-1}\tilde SB_j\tilde C}^2=6$, so since
\[\frob{AB^{-1}AB-B^{-1}ABA}^2=\sum\limits_{j=0}^{n-1}\frob{C_jB_j^{-1}S_jB_j-B_j^{-1}S_jB_jC_j}^2,\]
the lemma follows.
\end{proof}

\subsection{An example of a finitely generated, non-residually finite, and Frobenius-approximated group}
\label{grexample}
Note that the example above provides a homomorphism into the ultraproduct $\ph\colon \mathrm{BS}(2,3)\to\Uni_\uf^{\frob\cdot}$. The image $\Gamma=\ph(\mathrm{BS}(2,3))$ is clearly Frobenius-approximated, but it is clearly not residually finite, since, by construction, the elements $\ph(a),\ph(b)\in\Gamma$ satisfy \eqref{eq_BS} but not \eqref{eq_comm}. In some sense it is an artefact of the definitions that every non-Frobenius-stable group has a non-trivial Frobenius-approximated group quotient. It seems quite likely that the construction above is enough to show that $\mathrm{BS}(2,3)$ is itself Frobenius-approximated. Indeed, even though the proof of this assertion is not spelled out in full detail in \cite{MR2436761}, it appears that R\u adulescu's construction shows this. Note that it follows from work of Kropholler \cite{MR1078097} that ${\rm BS}(2,3)$ is residually solvable and hence MF, see \cite{cde}.

\section[Diminishing the defect]{Diminishing the defect of asymptotic representations}\label{sec:def}

This section contains the key technical novelty of this article. We associate an element $[\alpha]\in H^2(\Gamma,\prod_{n\to\uf}( {\rm M}_{k_n}(\mathbb C),\lVert\cdot\rVert))$ to an asymptotic representation $\ph_n\colon \Gamma\to{\rm U}(k_n)$. We prove that if $[\alpha]=0$, then the defect can be diminished in the sense that there is an equivalent asymptotic representation $\ph_n'$ with effectively better defect, more precisely $\defect(\ph_n')=o_\uf(\defect(\ph_n))$.

\subsection{Assumptions for this section}\label{maryassumption}
For this section, we fix the following.
\begin{itemize}
\item
A finitely presented group
$\Gamma=\langle S\mid R\rangle$,
\item a sequence of natural numbers $(k_n)_{n\in\NN}$,
\item a family of submultiplicative, unitarily invariant norms on ${\rm U}(k),k\in\NN$, all denoted by $\lVert\cdot \rVert$, and
\item an asymptotic representation $\ph_n:S\to{\rm U}(k_n)$ with respect to the metrics associated to $\lVert\cdot\rVert$.
\end{itemize}
Recall the ultraproduct notation introduced in Section \ref{ultra}, that is, $\Uniuf=\prod_{n\to\uf}({\rm U}(k_n),\dist_{\lVert\cdot\rVert})$ and $\Matuf=\prod_{n\to\uf}( {\rm M}_{k_n}(\mathbb C),\Vert \cdot\Vert)$ and recall that since $\lVert\cdot\rVert$ is submultiplicative, $\Uniuf$ acts on $\Matuf$ by multiplication.
An asymptotic representation as above induces a homomorphism $\ph_\uf:\Gamma\to \Uniuf$ on the level of the group $\Gamma$. Thus $\Gamma$ acts on $\Matuf$ through $\ph_\uf$.
With this in mind, we also want to fix the following:
\begin{itemize}
\item a section $\sigma\colon \Gamma\to\mathbb F_S$ of the natural surjection $\mathbb F_S \to \Gamma$, in particular, we have $\sigma(g)\sigma(h)\sigma(gh)^{-1}\in\llangle R\rrangle$ for all $g,h\in \Gamma$,
\item a sequence $\tilde\ph_n\colon \Gamma\to{\rm U}(k_n)$ such that $\tilde\ph(1_\Gamma)=1_{k_n}$, $\tilde\ph_n(g^{-1})=\tilde\ph_n(g)^*$, and for every $g\in \Gamma$
\begin{equation}\lVert \ph_n(\sigma(g))-\tilde\ph_n(g)\Vert=O_\uf(\defect(\ph_n)).\label{kinderriegel}
\end{equation}
In particular, the sequence $\tilde\ph_n$ is a lift of $\ph_\uf$.
\end{itemize}
For this, note that given any section $\sigma\colon \Gamma\to\mathbb F_S$, the sequence $\ph_n\circ\sigma$ is a lift of $\ph_\uf$. There exists a section $\sigma$ with $\sigma(1_\Gamma)=1_{\mathbb F_S}$ and $\sigma(g^{-1})=\sigma(g)^{-1}$ for all $g$ such that $g^2\neq 1_\Gamma$. We define $\tilde\ph_n(g):=\ph_n(\sigma(g))$ for all $g$ with $g^2\neq 1_\Gamma$. In the case $g^2=1_\Gamma$, by Lemma \ref{relfact} it holds that
\[\lVert\ph_n(\sigma(g))^2-1_{k_n}\rVert= O_\uf(\defect(\ph_n)),\] so by Proposition \ref{quadclose} there are self-adjoint unitaries $B_n\in{\rm U}(k_n)$ such that
\[\lVert B_n-\ph_n(\sigma(g))\rVert=O_\uf(\defect(\ph_n)).\]
By letting $\tilde\ph_n(g):=B_n$, we get the desired map.

\subsection{The cohomology class of an asymptotic representation}
We want to define an element in $H^2(\Gamma,\Matuf)$ associated to $\ph_n$.
To this end we define $c_n:=c_n(\ph_n):\Gamma\times \Gamma\to {\rm M}_{k_n}(\mathbb C)$ by
\[c_n(g,h)=
\frac{\tilde\ph_n(g)\tilde\ph_n(h)-\tilde\ph_n(gh)}{\defect(\ph_n)},\]
for all $n\in\NN$ such that $\defect(\ph_n)>0$ and $c_n(g,h)=0$ otherwise, for all $g,h\in \Gamma$.
The next proposition is a collection of basic properties of the maps $c_n$.
\begin{prop}\label{cprops}
Let $g,h,k\in\Gamma$. The maps $c_n$ satisfy the following equations
\begin{gather*}
\tilde\ph_n(g)c_n(h,k)-c_n(gh,k)+c_n(g,hk)-c_n(g,h)\tilde\ph_n(k)=0,\label{cocyc}
\\
c_n(g,g^{-1})=c_n(1_\Gamma,g)=c_n(g,1_\Gamma)=0\qquad \text{and}\qquad c_n(g,h)^*=c_n(h^{-1},g^{-1}),
\end{gather*}
Furthermore, we have for every $g,h \in \Gamma$
\begin{equation}
\Vert c_n(g,h)\Vert=O_\uf(1).\label{cbound}
\end{equation}

\end{prop}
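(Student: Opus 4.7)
\medskip

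\noindent\textbf{Proof plan.} The first identity is a direct algebraic verification. I would expand each of the four summands in
\[\tilde\ph_n(g)c_n(h,k)-c_n(gh,k)+c_n(g,hk)-c_n(g,h)\tilde\ph_n(k)\]
using the definition of $c_n$, pull out the common factor $1/\defect(\ph_n)$, and observe that the eight resulting products of $\tilde\ph_n$-terms cancel pairwise. This is the standard computation showing that the ``associator'' of any set-theoretic lift of a group into a $\Gamma$-module is automatically a normalized $2$-cocycle with respect to the left- and right-multiplication actions.

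The normalization identities follow immediately from the conventions on $\tilde\ph_n$ fixed in Section~\ref{maryassumption}. Since $\tilde\ph_n(1_\Gamma)=1_{k_n}$, the formulas $c_n(1_\Gamma,g)=c_n(g,1_\Gamma)=0$ are read off directly from the definition. Since $\tilde\ph_n(g^{-1})=\tilde\ph_n(g)^*$, one has $\tilde\ph_n(g)\tilde\ph_n(g^{-1})=1_{k_n}=\tilde\ph_n(gg^{-1})$, giving $c_n(g,g^{-1})=0$. For the adjoint identity I would take the adjoint of the defining expression of $c_n(g,h)$, use $\tilde\ph_n(x)^*=\tilde\ph_n(x^{-1})$ and $(gh)^{-1}=h^{-1}g^{-1}$, and recognize the result as $c_n(h^{-1},g^{-1})$.

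The substantive step, and the expected main obstacle, is the uniform bound $\Vert c_n(g,h)\Vert=O_\uf(1)$, which is equivalent to the estimate
\[\bigl\Vert \tilde\ph_n(g)\tilde\ph_n(h)-\tilde\ph_n(gh)\bigr\Vert=O_\uf(\defect(\ph_n)).\]
My plan is to reduce this to an estimate for $\ph_n\circ\sigma$ in place of $\tilde\ph_n$. Applying \eqref{kinderriegel} three times in a telescoping fashion, and using Proposition~\ref{uniprop}(1) to prevent the intermediate unitary factors from inflating the norm, the cost of replacing $\tilde\ph_n$ by $\ph_n\circ\sigma$ is $O_\uf(\defect(\ph_n))$. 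What then remains to control is
\[\ph_n(\sigma(g))\ph_n(\sigma(h))-\ph_n(\sigma(gh))=\bigl(\ph_n(r)-1_{k_n}\bigr)\ph_n(\sigma(gh)),\]
where $r:=\sigma(g)\sigma(h)\sigma(gh)^{-1}\in\llangle R\rrangle$, and where I have used that $\ph_n\colon\mathbb F_S\to{\rm U}(k_n)$ is a genuine homomorphism. Lemma~\ref{relfact} bounds $\Vert \ph_n(r)-1_{k_n}\Vert$ by $K_r\defect(\ph_n)$, and since $\ph_n(\sigma(gh))$ is unitary, Proposition~\ref{uniprop}(1) transfers this bound to the product. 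The subtlety to watch is that the implied constants depend on the fixed pair $g,h$ (through the length of the word $r$ and through \eqref{kinderriegel}) but \emph{not} on $n$, which is exactly the pointwise-in-$(g,h)$ statement that \eqref{cbound} makes.
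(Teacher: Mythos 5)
Your proposal is correct and follows essentially the same route as the paper's proof: direct expansion for the cocycle-type identity, reading off the normalization and adjoint identities from the conventions $\tilde\ph_n(1_\Gamma)=1_{k_n}$ and $\tilde\ph_n(g^{-1})=\tilde\ph_n(g)^*$, and deducing the $O_\uf(1)$ bound by combining Lemma~\ref{relfact} applied to $\sigma(g)\sigma(h)\sigma(gh)^{-1}\in\llangle R\rrangle$ with the estimate~\eqref{kinderriegel}. The paper states the final step a bit more tersely, but your telescoping and the use of unitary invariance via Proposition~\ref{uniprop}(1) is exactly what the implicit estimate amounts to.
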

\begin{proof}
For all $g,h,k\in \Gamma$ and $n\in\NN$ we have
\begin{align*}
&\defect(\ph_n)\cdot(\tilde\ph_n(g)c_n(h,k)-c_n(gh, k) + c_n(g, hk)-c_n(g, h)\tilde\ph_n(k))\\
&= \tilde\ph_n(g)(\tilde\ph_n(h)\tilde\ph_n(k)-\tilde\ph_n(hk))-(\tilde\ph_n(gh)\tilde\ph_n(k)-\tilde\ph_n(ghk))\\
&+ (\tilde\ph_n(g)\tilde\ph_n(hk)-\tilde\ph_n(ghk))-(\tilde\ph_n(g)\tilde\ph_n(h)-\tilde\ph_n(gh))\tilde\ph_n(k)
\\&= \tilde\ph_n(g)\tilde\ph_n(h)\tilde\ph_n(k)-\tilde\ph_n(g)\tilde\ph_n(hk)-\tilde\ph_n(gh)\tilde\ph_n(k)+\tilde\ph_n(ghk)\\&
+\tilde\ph_n(g)\tilde\ph_n(hk)-\tilde\ph_n(ghk)-\tilde\ph_n(g)\tilde\ph_n(h)\tilde\ph_n(k)+\tilde\ph_n(gh)\tilde\ph_n(k)\\
&= 0,
\end{align*}
which proves the first equation. The second line of equations is immediate from the definition of $c_n$ and the fact that $\tilde\ph_n(g^{-1})=\tilde\ph_n(g)^*$.

For the last assertion, note that since $\sigma(g)\sigma(h)\sigma(gh)^{-1}\in\llangle R\rrangle$,
it follows from Lemma \ref{relfact} that
\[\Vert \ph_n(\sigma(g)\sigma(h)\sigma(gh)^{-1})-1_{k_n}\Vert=O_\uf(\defect(\ph_n)).
\]
and thus it follows (by using Equation \eqref{kinderriegel}) that
\[\defect(\ph_n)\Vert c_n(g,h)\Vert=O_\uf(\defect(\ph_n)).\qedhere\]
\end{proof}

By \eqref{cbound} it follows that for every $g,h \in \Gamma$, $c_n(g,h)$ is a bounded sequence, so the sequence defines a map
\[c=(c_n)_{n\in\NN}\colon \Gamma\times \Gamma\to\Matuf.\]
This map is not a cocycle in the sense explained in Section \ref{sec:cohom}, but, as the next corollary states, the map $\alpha(g,h):=c(g,h)\ph_\uf(gh)^*$ is. (The map $c$ is a cocycle in the equivalent picture of \emph{Hochschild} cohomology and it turns out that some calculations are more natural with $c$, so we will also work with this map.)
Even though we suppress it in the notation, keep in mind that $c$ and $\alpha$ depend on the lift $\tilde\ph_n$ and on $\defect(\ph_n)$.

\begin{cor}
The map $\alpha \colon \Gamma\times \Gamma\to\Matuf$ is a 2-cocycle with respect to the isometric action $\pi(g)T=\ph_\uf(g)T\ph_\uf(g)^*,g\in\Gamma,T\in\Matuf$.
\end{cor}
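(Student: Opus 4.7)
The plan is to verify the 2-cocycle identity for $\alpha$ directly, by translating the first identity in Proposition \ref{cprops} from the level of the lifts $\tilde\ph_n$ down to the ultraproduct and then rearranging. Recall that the coboundary of a 2-cochain with respect to the conjugation action $\pi$ reads
\[
d^2\alpha(g,h,k)=\ph_\uf(g)\alpha(h,k)\ph_\uf(g)^*-\alpha(gh,k)+\alpha(g,hk)-\alpha(g,h),
\]
so the goal is to show that this expression vanishes in $\Matuf$ for all $g,h,k\in\Gamma$.

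First I would pass the identity
\[
\tilde\ph_n(g)c_n(h,k)-c_n(gh,k)+c_n(g,hk)-c_n(g,h)\tilde\ph_n(k)=0
\]
to the ultraproduct. Since $\tilde\ph_n$ is a lift of $\ph_\uf$ and the sequences $c_n(g,h)$ are bounded by \eqref{cbound}, the image of $\tilde\ph_n(g)$ in $\Matuf$ acts on $c(g,h)$ as left (respectively right) multiplication by $\ph_\uf(g)$, and one obtains in $\Matuf$ the relation
\[
\ph_\uf(g)c(h,k)-c(gh,k)+c(g,hk)=c(g,h)\ph_\uf(k).
\]

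Next I would plug $\alpha(g,h)=c(g,h)\ph_\uf(gh)^*$ into $d^2\alpha(g,h,k)$. Since $\ph_\uf$ is a homomorphism, $\ph_\uf(hk)^*\ph_\uf(g)^*=\ph_\uf(ghk)^*$, so the first three terms of $d^2\alpha(g,h,k)$ combine into
\[
\bigl[\ph_\uf(g)c(h,k)-c(gh,k)+c(g,hk)\bigr]\ph_\uf(ghk)^*,
\]
which by the displayed identity equals $c(g,h)\ph_\uf(k)\ph_\uf(ghk)^*=c(g,h)\ph_\uf(gh)^*=\alpha(g,h)$, using $\ph_\uf(k)\ph_\uf(k)^*=1$. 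The fourth term $-\alpha(g,h)$ then cancels this, giving $d^2\alpha(g,h,k)=0$.

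There is essentially no obstacle; the only point deserving care is the reduction from the defining identity involving $\tilde\ph_n$ (which is only an approximate lift, cf.\ \eqref{kinderriegel}) to a clean identity in the ultraproduct. This is harmless because the ambiguity between $\tilde\ph_n(g)$ and $\ph_n(\sigma(g))$ is $O_\uf(\defect(\ph_n))$ and the sequences $c_n(g,h)$ are uniformly bounded, so all correction terms are $o_\uf(1)$ and vanish in $\Matuf$. Finally, one should note that the action $\pi$ is isometric precisely because the norm $\lVert\cdot\rVert$ is unitarily invariant, which is exactly the hypothesis in Section \ref{maryassumption}.
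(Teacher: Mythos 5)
Your proof is correct and follows essentially the same route as the paper: substitute $\alpha(g,h)=c(g,h)\ph_\uf(gh)^*$ into the coboundary, use multiplicativity of $\ph_\uf$ to factor out $\ph_\uf(ghk)^*$, and invoke the cocycle identity for $c$ from Proposition~\ref{cprops}. The only cosmetic difference is that you combine three terms into $\alpha(g,h)$ and cancel, while the paper factors $\ph_\uf(ghk)^*$ out of all four terms simultaneously; these are the same computation.
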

\begin{proof}
Given $g,h,k\in \Gamma$ we have that
\begin{align*}
&\ph_\uf(g)\alpha(h,k)\ph_\uf(g)^{*}-\alpha(gh,k)+\alpha(g,hk)-\alpha(g,h)\\
&=\ph_\uf(g)c(h,k)\ph_\uf(hk)^{*}\ph_\uf(g)^{*}-c(gh,k)\ph_\uf(ghk)^{*}\\&\ +c(g,hk)\ph_\uf(ghk)^{*}-c(g,h)\ph_\uf(gh)^{*}\\
&=(\ph_\uf(g)c(h,k)-c(gh,k)+c(g,hk)-c(g,h)\ph_\uf(k))\ph_\uf(ghk)^{*}\\&=0,
\end{align*}
where we used that $\ph_\uf$ is a homomorphism and Proposition \ref{cprops}.
\end{proof}

We call $\alpha$ the cocycle associated to the sequence $(\varphi_n)_{n \in \mathbb N}$.

\begin{prop}\label{iota}
Assume that $\alpha$ represents the trivial cohomology class in $H^2(\Gamma,\Matuf)$, i.e.\ there exists a map $\beta\colon\Gamma\to\Matuf$ satisfying
\[\alpha(g,h)=\ph_\uf(g)\beta(h)\ph_\uf(g)^*-\beta(gh)+\beta(g),\qquad g,h\in\Gamma.\]
Then
\begin{gather}
\beta(1_\Gamma)=0,\label{en}\\
\beta(g)=-\ph_\uf(g)\beta(g^{-1})\ph_\uf(g)^*\label{to}\\
c(g,h)=\ph_\uf(g)\beta(h)\ph_\uf(h)-\beta(gh)\ph_\uf(gh)+\beta(g)\ph_\uf(gh).\label{tre}
\end{gather}
Furthermore, we can choose
$\beta(g)$ to be skew-symmetric for all $g\in\Gamma$.
\end{prop}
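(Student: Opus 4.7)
The plan is to obtain the identities \eqref{en}--\eqref{tre} by specializing the coboundary equation, and then to upgrade $\beta$ to a skew-adjoint solution by subtracting off a self-adjoint $1$-cocycle built from $\beta$ itself.

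For \eqref{en} I would specialize $g = h = 1_\Gamma$ in the coboundary equation: two of the $\beta$-terms cancel and the conjugation is trivial, so the equation collapses to $\beta(1_\Gamma) = \alpha(1_\Gamma, 1_\Gamma)$, which vanishes because $c(1_\Gamma, 1_\Gamma) = 0$ by Proposition \ref{cprops}. For \eqref{to} I would specialize $h = g^{-1}$; invoking \eqref{en} and the vanishing $\alpha(g, g^{-1}) = c(g, g^{-1})\ph_\uf(1_\Gamma)^* = 0$ immediately yields $\beta(g) = -\ph_\uf(g)\beta(g^{-1})\ph_\uf(g)^*$. For \eqref{tre} I would right-multiply the coboundary equation by $\ph_\uf(gh)$: the left side becomes $c(g, h)$ by the definition of $\alpha$, and the homomorphism identity $\ph_\uf(g)^*\ph_\uf(gh) = \ph_\uf(h)$ turns the first term on the right into $\ph_\uf(g)\beta(h)\ph_\uf(h)$.

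For the skew-adjoint modification I would set $\eta(g) := \beta(g) + \beta(g)^*$ (twice the self-adjoint part) and show that $\eta$ is a $1$-cocycle for the conjugation action, i.e.\ $\ph_\uf(g)\eta(h)\ph_\uf(g)^* - \eta(gh) + \eta(g) = 0$. The argument would produce two expressions for $c(h^{-1}, g^{-1})$: first, apply \eqref{tre} with $(g,h)$ replaced by $(h^{-1}, g^{-1})$ and use \eqref{to} to rewrite $\beta(g^{-1}), \beta(h^{-1}), \beta((gh)^{-1})$ in terms of $\beta(g), \beta(h), \beta(gh)$; second, take adjoints of \eqref{tre} directly and invoke the symmetry $c(g, h)^* = c(h^{-1}, g^{-1})$ from Proposition \ref{cprops}. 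Equating the two expressions and left-multiplying by $\ph_\uf(gh)$ should collapse everything to exactly the cocycle equation for $\eta$. Then $\beta'(g) := \beta(g) - \tfrac{1}{2}\eta(g) = \tfrac{1}{2}(\beta(g) - \beta(g)^*)$ is manifestly skew-adjoint, and since $d\eta = 0$ it satisfies $d\beta' = d\beta = \alpha$; properties \eqref{en}--\eqref{tre} transfer to $\beta'$ automatically, since they were derived purely from $d\beta' = \alpha$ and the vanishing of $c$ at trivial arguments.

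The hard part will be the bookkeeping in the last paragraph: several cancellations of the form $\ph_\uf(x)\ph_\uf(x)^* = 1$ are needed when unpacking both expressions for $c(h^{-1}, g^{-1})$, and one has to verify that after all simplification the residual terms assemble into precisely the $1$-cocycle identity for $\eta$ rather than some weaker relation. The key conceptual input is the \emph{submultiplicativity} implicit in the construction (via the symmetry $c(g,h)^* = c(h^{-1}, g^{-1})$, which in turn rests on $\tilde\ph_n(g)^* = \tilde\ph_n(g^{-1})$ from Section \ref{maryassumption}).
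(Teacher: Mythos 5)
Your proof is correct and follows essentially the same route as the paper: \eqref{tre} is the coboundary relation rewritten via $c=\alpha\cdot\ph_\uf$, \eqref{en} and \eqref{to} come from specializing $g=h=1_\Gamma$ and $h=g^{-1}$, and your skew-symmetrization step (showing $\eta=\beta+\beta^*$ is a cocycle so that $(\beta-\beta^*)/2$ still solves $d\beta^\sharp=\alpha$) is equivalent to the paper's observation that $\beta'(g):=-\beta(g)^*$ also satisfies \eqref{tre}. One small correction: the symmetry $c(g,h)^*=c(h^{-1},g^{-1})$ rests only on the normalizations $\tilde\ph_n(g^{-1})=\tilde\ph_n(g)^*$ and $\tilde\ph_n(1_\Gamma)=1_{k_n}$ chosen in Section \ref{maryassumption}, not on submultiplicativity, which is a standing hypothesis of the section (needed to make $\Matuf$ a $\Uniuf$-module at all) but is not what drives this particular proposition.
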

\begin{proof}
Equation \eqref{tre} is immediate from
$
c(g,h)=\alpha(g,h)\ph_\uf(gh)
$
for $g,h\in G$.
Equation \eqref{en} follows from \eqref{tre} and Proposition \ref{cprops} with $g=h=1_\Gamma$ and \eqref{to} follows from \eqref{en}, \eqref{tre} and Proposition \ref{cprops} with $h=g^{-1}$.
For the last claim, we possibly need to alter $\beta$ a little. Note that $\beta'(g):=-\beta(g)^*=\ph_\uf(g)\beta(g^{-1})^*\ph_\uf(g)^*$ also satisfies \eqref{en}-\eqref{tre}. Indeed,
\begin{align*}
&c(g,h)=c(h^{-1},g^{-1})^*\\
&=(\ph_\uf(h^{-1})\beta(g^{-1})\ph_\uf(g^{-1})-\beta(h^{-1}g^{-1})\ph_\uf(h^{-1}g^{-1})+\beta(h^{-1})\ph_\uf(h^{-1}g^{-1}))^*
\\&=\ph_\uf(g)\beta(g^{-1})^*\ph_\uf(h)-\ph_\uf(gh)\beta((gh)^{-1})^*+\ph_\uf(gh)\beta(h^{-1})^*
\\&=\beta'(g)\ph_\uf(gh)-\beta'(gh)\ph_\uf(gh)+\ph_\uf(g)\beta'(h)\ph_\uf(h)
\end{align*}
for $g,h\in \Gamma$, which proves \eqref{tre} whence the other two follow.
Thus, replacing $\beta$ with 
\[\beta^\sharp(g):=
\frac{\beta(g)-\beta(g)^*}{2},\qquad g\in \Gamma,n\in\NN
,\]
we see that $\beta^\sharp(g)$ is skew-symmetric and that Equations \eqref{en}-\eqref{tre} are still satisfied.
\end{proof}

\subsection{Correction of the asymptotic representation}
Now let $\beta$ be as above and let $\beta_n\colon \Gamma\to {\rm M}_{k_n}(\mathbb C)$ be any skew-symmetric lift of $\beta$.
Then $\exp(-\defect(\ph_n)\beta_n(g))$ is a unitary for every $g\in \Gamma$, so we can define a sequence of maps $\psi_n:\Gamma\to{\rm U}(k_n)$ by 
\[\psi_n(g)=\exp(-\defect(\ph_n)\beta_n(g))\tilde\ph_n(g).\]
Note that since $\tilde\ph_n(1_\Gamma)=1_{k_n}$ and $\beta_n(1_\Gamma)=0$, we have $\psi_n(1_\Gamma)=1_{k_n}$.

In the proofs of Proposition \ref{closeto} and Lemma \ref{lemspeed}, we will make use of two basic inequalities that hold for any $k \in \NN$ and $A \in {\rm M}_k(\mathbb C)$:
\begin{equation} \label{eqb1}
\|1_k-\exp(A)\| \leq \|A\| \exp(\|A\|)
\end{equation}
\begin{equation} \label{eqb2}
\|1_k-A-\exp(A)\| \leq \|A\|^2 \exp(\|A\|),
\end{equation}
They are simple consequences of the definition $\exp(A)=\sum_{k=0}^{\infty} \frac{A^k}{k!}$ and the triangle inequality and submultiplicativity of the norm.

\begin{prop}\label{closeto}
With the notation from above, for every $g\in\Gamma$, we have
\[\Vert \tilde\ph_n(g)-\psi_n(g)\Vert=O_\uf(\defect(\ph_n)).\]
More precisely,
\[\Vert\tilde\ph_n(g)-\psi_n(g)\Vert\leq 2\Vert \beta_n(g)\Vert\defect(\ph_n)\]
for most $n\in\NN$.
\end{prop}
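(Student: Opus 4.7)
The plan is to exploit the factorization $\tilde\ph_n(g) - \psi_n(g) = (1_{k_n} - \exp(-\defect(\ph_n)\beta_n(g)))\tilde\ph_n(g)$ together with unitary invariance and the exponential bound \eqref{eqb1}. The proof is essentially a direct computation; there is no real obstacle, only a couple of estimates to keep track of.

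First I would write
\[
\tilde\ph_n(g) - \psi_n(g) = \bigl(1_{k_n} - \exp(-\defect(\ph_n)\beta_n(g))\bigr)\tilde\ph_n(g).
\]
Since $\tilde\ph_n(g)\in{\rm U}(k_n)$, by Proposition \ref{uniprop}(1) (applied with the operator norm of the unitary factor, which equals $1$), the unitary invariance reduces the norm of the left-hand side to
\[
\Vert\tilde\ph_n(g)-\psi_n(g)\Vert = \bigl\Vert 1_{k_n} - \exp(-\defect(\ph_n)\beta_n(g))\bigr\Vert.
\]

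Next I would apply the inequality \eqref{eqb1} with $A = -\defect(\ph_n)\beta_n(g)$, which gives
\[
\Vert\tilde\ph_n(g)-\psi_n(g)\Vert \leq \defect(\ph_n)\Vert\beta_n(g)\Vert \exp\bigl(\defect(\ph_n)\Vert\beta_n(g)\Vert\bigr).
\]

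Finally, since $\beta_n$ is a lift of $\beta(g)\in\Matuf$, the sequence $(\Vert\beta_n(g)\Vert)_{n\in\NN}$ is bounded along $\uf$, and since $\defect(\ph_n)\to 0$ as $n\to\uf$, we have $\defect(\ph_n)\Vert\beta_n(g)\Vert \to 0$, so in particular $\exp(\defect(\ph_n)\Vert\beta_n(g)\Vert)\leq 2$ for most $n\in\NN$. This yields the precise bound
\[
\Vert\tilde\ph_n(g)-\psi_n(g)\Vert \leq 2\Vert\beta_n(g)\Vert\defect(\ph_n)
\]
for most $n$, from which the $O_\uf(\defect(\ph_n))$ statement is immediate.
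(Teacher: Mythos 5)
Your proof is correct and follows essentially the same route as the paper's: factor out $\tilde\ph_n(g)$, use unitary invariance to reduce to $\Vert 1_{k_n}-\exp(-\defect(\ph_n)\beta_n(g))\Vert$, apply the exponential estimate \eqref{eqb1}, and note that boundedness of $\Vert\beta_n(g)\Vert$ along $\uf$ together with $\defect(\ph_n)\to 0$ makes the exponential factor at most $2$ for most $n$.
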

\begin{proof}
Let $g\in\Gamma$.
By unitary invariance and submultiplicativity, we get that
\begin{align*}
\Vert\tilde\ph_n(g)-\psi_n(g)\Vert
&=\Vert 1_{k_n}-\exp(-\defect(\ph_n)\beta_n(g))\Vert
\\&\stackrel{\eqref{eqb1}}{\leq} \defect(\ph_n)\Vert\beta_n(g)\Vert\exp(\defect(\ph_n)\Vert\beta_n(g)\Vert)
\end{align*}
and since $\Vert\beta_n(g)\Vert$ is a bounded sequence and $\lim_{n\to\uf}\defect(\ph_n)=0$, we have $\exp(\defect(\ph_n)\Vert\beta_n(g)\Vert)\leq 2$ for most $n$ and the result follows.
\end{proof}

It follows that $\psi_n|_S$ is an asymptotic representation with $\defect(\psi_n|_S)=O_\uf(\defect(\ph_n))$, but we prove that the defect is actually $o_\uf(\defect(\ph_n))$.

\begin{lemma}\label{lemspeed}
For any $g,h\in \Gamma$, we have that
\[\Vert \psi_n(gh)-\psi_n(g)\psi_n(h)\Vert=o_\uf(\defect(\ph_n)).\]
\end{lemma}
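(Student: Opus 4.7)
The plan is to expand both $\psi_n(gh)$ and $\psi_n(g)\psi_n(h)$ as power series in $\eps_n := \defect(\ph_n)$ up to second order, and show that the first-order terms cancel in $\Matuf$ thanks to the equation (\ref{tre}) in Proposition \ref{iota}. Abbreviate $\eps_n=\defect(\ph_n)$. From \eqref{eqb2} one has
\[
\exp(-\eps_n\beta_n(g)) = 1_{k_n} - \eps_n\beta_n(g) + E_n(g),\qquad \|E_n(g)\|=O_\uf(\eps_n^{2}),
\]
since $\|\beta_n(g)\|=O_\uf(1)$. Consequently
\[
\psi_n(g) = \tilde\ph_n(g) - \eps_n\beta_n(g)\tilde\ph_n(g) + E_n(g)\tilde\ph_n(g).
\]

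Multiplying two such expansions and using unitary invariance together with submultiplicativity, all contributions containing at least one factor $E_n$ or the product $\beta_n(g)\otimes\beta_n(h)$ are of norm $O_\uf(\eps_n^{2})$. Hence
\[
\psi_n(g)\psi_n(h) = \tilde\ph_n(g)\tilde\ph_n(h) - \eps_n\bigl(\beta_n(g)\tilde\ph_n(g)\tilde\ph_n(h) + \tilde\ph_n(g)\beta_n(h)\tilde\ph_n(h)\bigr) + O_\uf(\eps_n^{2}).
\]
Now substitute $\tilde\ph_n(g)\tilde\ph_n(h) = \tilde\ph_n(gh) + \eps_n c_n(g,h)$ in the leading position (the correction is $O_\uf(\eps_n)$, absorbed in the error), and also use it to replace $\beta_n(g)\tilde\ph_n(g)\tilde\ph_n(h)$ by $\beta_n(g)\tilde\ph_n(gh)$ up to a further $O_\uf(\eps_n^{2})$ term. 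A parallel expansion of $\psi_n(gh) = \tilde\ph_n(gh) - \eps_n\beta_n(gh)\tilde\ph_n(gh) + O_\uf(\eps_n^{2})$ then yields
\[
\psi_n(gh) - \psi_n(g)\psi_n(h) = \eps_n\, X_n(g,h) + O_\uf(\eps_n^{2}),
\]
where
\[
X_n(g,h) := -c_n(g,h) - \beta_n(gh)\tilde\ph_n(gh) + \beta_n(g)\tilde\ph_n(gh) + \tilde\ph_n(g)\beta_n(h)\tilde\ph_n(h).
\]

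It remains to check that $\|X_n(g,h)\| = o_\uf(1)$. Passing to the ultraproduct $\Matuf$, the sequence $X_n(g,h)$ represents
\[
-c(g,h) - \beta(gh)\ph_\uf(gh) + \beta(g)\ph_\uf(gh) + \ph_\uf(g)\beta(h)\ph_\uf(h),
\]
which vanishes by Equation \eqref{tre}. Therefore $\lim_{n\to\uf}\|X_n(g,h)\| = 0$, so
\[
\|\psi_n(gh) - \psi_n(g)\psi_n(h)\| \le \eps_n\,\|X_n(g,h)\| + O_\uf(\eps_n^{2}) = o_\uf(\eps_n),
\]
as required. The only subtle point is bookkeeping of error terms: one must verify that each $O_\uf(\eps_n)$ correction, when multiplied against the outer $\eps_n$ factor, becomes $O_\uf(\eps_n^{2})$, which uses only boundedness of $\|\beta_n(g)\|$ and $\|c_n(g,h)\|$ established in Proposition \ref{cprops}; this is the only place where genuine estimates (as opposed to algebraic identities) enter.
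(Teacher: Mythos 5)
Your proof is correct and follows essentially the same route as the paper: expand the exponential to first order via Equation \eqref{eqb2}, collect the linear-in-$\defect(\ph_n)$ coefficient, and recognize it as (a lift of) the combination that vanishes in $\Matuf$ by Equation \eqref{tre}. The paper packages the first-order truncation as an explicit auxiliary map $\xi_n(x)=(1_{k_n}-\defect(\ph_n)\beta_n(x))\tilde\ph_n(x)$ and computes $\xi_n(gh)-\xi_n(g)\xi_n(h)$ exactly, keeping the term $\beta_n(g)\tilde\ph_n(g)\tilde\ph_n(h)$ and letting multiplicativity of $\ph_\uf$ take care of it in the ultraproduct; you instead substitute $\tilde\ph_n(g)\tilde\ph_n(h)=\tilde\ph_n(gh)+\defect(\ph_n)c_n(g,h)$ at the finite level and absorb the difference into the $O_\uf(\defect(\ph_n)^2)$ error, which is a purely cosmetic variation.
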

\begin{proof}
Let $\xi_n(x):=(1_{k_n}-\defect(\ph_n)\beta_n(x))\tilde\ph_n(x)$, for $x\in\Gamma$, and
let $g,h\in \Gamma$ be fixed. Let $C=2\max_{x\in \{g,h,gh\}}\Vert \beta(x)\Vert$.
Whence it follows that for most $n\in\NN$,
\begin{align*}
\Vert\psi_n(x)-\xi_n(x)\Vert \stackrel{\eqref{eqb2}}{\leq} C\cdot\defect(\ph_n)^2,
\end{align*}
for $x\in \{g,h,gh\}$.
By the above (and by submultiplicativity) it follows that
\[\Vert \psi_n(gh)-\psi_n(g)\psi_n(h)\Vert=\Vert \xi_n(gh)-\xi_n(g)\xi_n(h)\Vert+o_\uf(\defect(\ph_n))\]
so it suffices to show that
\[\Vert \xi_n(gh)-\xi_n(g)\xi_n(h)\Vert=o_\uf(\defect(\ph_n))\]
which amounts to the following calculations
\begin{align*}
&\xi_n(gh)-\xi_n(g)\xi_n(h)\\
&=\tilde\ph_n(gh)-\tilde\ph_n(g)\tilde\ph_n(h)
\\&\ +\defect(\ph_n)(-\beta_n(gh)\tilde\ph_n(gh)+\tilde\ph_n(g)\beta_n(h)\tilde\ph_n(h)
+\beta_n(g)\tilde\ph_n(g)\tilde\ph_n(h))
\\&\ -\defect(\ph_n)^2\beta_n(g)\tilde\ph_n(g)\beta_n(h)\tilde\ph(h)
\\&=\defect(\ph_n)(-c_n(g,h)
\\&\ +\tilde\ph_n(g)\beta_n(h)\tilde\ph_n(h)-\beta_n(gh)\tilde\ph_n(gh)+\beta_n(g)\tilde\ph_n(g)\tilde\ph_n(h))
\\&\ -\defect(\ph_n)^2\beta_n(g)\tilde\ph_n(g)\beta_n(h)\tilde\ph_n(h).
\end{align*}
By Equation \eqref{tre} and the fact that submultiplicativity of the norm implies that $\lVert \beta_n(g)\tilde\ph_n(g)\beta_n(h)\tilde\ph_n(h)\rVert$ is bounded, this finishes the proof.
\end{proof}

At last we define the asymptotic representation $\ph'_n\colon S\to{\rm U}(k_n)$ by $\ph'_n=\psi_n|_S$ and reach the desired conclusion $\defect(\ph_n')=o_\uf(\defect(\ph_n))$. Let us, for reference's sake, formulate the result properly.
\begin{theorem}\label{speed}
Let $\Gamma=\langle S\mid R\rangle$ be a finitely presented group and let $\ph_n\colon S\to{\rm U}(k_n)$ be an asymptotic representation with respect to a family of submultiplicative, unitarily invariant norms.
Assume that the associated 2-cocycle $\alpha=\alpha(\ph_n)$ is trivial in $H^2(\Gamma,\Matuf)$. Then there exists an asymptotic representation $\ph_n'\colon S\to{\rm U}(k_n)$ such that
\begin{enumerate}
\item[$(i)$] $\dist(\ph_n,\ph'_n)=O_\uf(\defect(\ph_n))$
and
\item[$(ii)$] $\defect(\ph'_n)=o_\uf(\defect(\ph_n))$.
\end{enumerate}
\end{theorem}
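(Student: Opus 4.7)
The plan is to take $\ph_n'$ to be the restriction to $S$ of the corrected map $\psi_n\colon\Gamma\to\Uni(k_n)$ that has already been built in this section. Triviality of $\alpha$ produces, via Proposition \ref{iota}, a skew-symmetric $\beta\colon\Gamma\to\Matuf$ satisfying \eqref{en}--\eqref{tre}; fixing a skew-symmetric lift $\beta_n$ of $\beta$, the map $\psi_n(g)=\exp(-\defect(\ph_n)\beta_n(g))\tilde\ph_n(g)$ is well defined and I set $\ph_n':=\psi_n|_S$. There remains to verify the two bounds (i) and (ii).

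For (i), I would use that the section $\sigma$ in Section \ref{maryassumption} can be chosen so that $\sigma(s)=s$ for every $s\in S$, hence $\tilde\ph_n(s)$ differs from $\ph_n(s)$ by at most $O_\uf(\defect(\ph_n))$ (with equality on non-involutions and the small Proposition \ref{quadclose}-perturbation on the finitely many $s\in S$ with $s^2=1_\Gamma$). Combining this with Proposition \ref{closeto} applied to each $s\in S$, and taking the maximum over the finite set $S$, immediately produces $\dist(\ph_n,\ph_n')=O_\uf(\defect(\ph_n))$.

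For (ii), the core tool is Lemma \ref{lemspeed}, which says $\psi_n$ is pointwise $o_\uf(\defect(\ph_n))$-multiplicative on $\Gamma$. The plan is to telescope. Fix a relation $r=s_{i_1}^{\eps_1}\cdots s_{i_\ell}^{\eps_\ell}\in R$ and set $h_j:=s_{i_1}^{\eps_1}\cdots s_{i_j}^{\eps_j}\in\Gamma$, so that $h_\ell=1_\Gamma$ and $\psi_n(h_\ell)=1_{k_n}$. A straightforward induction on $j$, using Lemma \ref{lemspeed} applied to the pairs $(h_{j-1},s_{i_j}^{\eps_j})$ together with the bound $\lVert\psi_n(g)T\rVert\leq\lVert T\rVert$ from Proposition \ref{uniprop}(1), should give
\[\lVert \psi_n(h_j)-\psi_n(s_{i_1})^{\eps_1}\cdots\psi_n(s_{i_j})^{\eps_j}\rVert = o_\uf(\defect(\ph_n)),\]
with a constant depending on $r$ but not on $n$. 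Setting $j=\ell$ reads $\lVert\ph_n'(r)-1_{k_n}\rVert=o_\uf(\defect(\ph_n))$, and maximising over the finite set $R$ yields the required bound on $\defect(\ph_n')$.

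The main obstacle is the appearance of the exponents $\eps_j=-1$ in the inductive step: $\ph_n'(r)$ contains honest matrix inverses $\psi_n(s)^{-1}$, while Lemma \ref{lemspeed} naturally concerns $\psi_n$ evaluated at the group-theoretic inverse $\psi_n(s^{-1})$. To reconcile these, I would apply Lemma \ref{lemspeed} to the pair $(g,g^{-1})$ to get $\psi_n(g)\psi_n(g^{-1})=\psi_n(1_\Gamma)+o_\uf(\defect(\ph_n))=1_{k_n}+o_\uf(\defect(\ph_n))$; multiplying on the left by the unitary $\psi_n(g)^{-1}$ and invoking unitary invariance of the norm then delivers $\lVert\psi_n(g^{-1})-\psi_n(g)^{-1}\rVert=o_\uf(\defect(\ph_n))$ for every $g\in\Gamma$. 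Absorbing one such extra error term per negative exponent into the induction completes the argument.
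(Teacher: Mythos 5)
Your proposal is correct and follows essentially the same telescoping strategy as the paper: apply Proposition~\ref{closeto} for~(i), then iterate Lemma~\ref{lemspeed} along a reduced word for each $r\in R$, using unitarity of $\psi_n$ and unitary invariance of the norm to propagate the $o_\uf(\defect(\ph_n))$ error through the product. One small point of difference is worth noting: the paper's proof writes $\ph_n'(r)=\psi_n(x_1)\cdots\psi_n(x_m)$ directly and absorbs into the notation the identification of the matrix inverse $\psi_n(s)^{-1}$ with the group-inverse value $\psi_n(s^{-1})$, which strictly speaking requires the skew-symmetric lift $\beta_n$ to be chosen compatibly with Equation~\eqref{to} (e.g.\ $\beta_n(g^{-1})=-\tilde\ph_n(g)^*\beta_n(g)\tilde\ph_n(g)$). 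Your extra application of Lemma~\ref{lemspeed} to the pair $(g,g^{-1})$, yielding $\lVert\psi_n(g^{-1})-\psi_n(g)^{-1}\rVert=o_\uf(\defect(\ph_n))$, sidesteps this choice and makes the argument work for an arbitrary skew-symmetric lift $\beta_n$; similarly, your observation that one should either arrange $\sigma(s)=s$ or invoke Lemma~\ref{relfact} to compare $\ph_n(s)$ with $\tilde\ph_n(s)$ fills in a step that the paper compresses into ``(i) follows from Proposition~\ref{closeto}.'' The payoff is a marginally more self-contained and robust write-up of the same argument.
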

\begin{proof}
We adopt the above notation. Assertion (i) follows from Proposition \ref{closeto}; let $r=x_1x_2\cdots x_m\in R$ be written as a reduced word, where $x_j\in S\cup S^{-1},j=1,\ldots,m$. By iteration of Lemma \ref{lemspeed} (using that $\psi_n$ takes unitary values and that $\lVert\cdot\rVert$ is unitarily invariant), we see that
\begin{align*}
\Vert\ph'_n(r)-1_{k_n}\Vert&=\Vert\psi_n(x_1)\psi_n(x_2)\cdots\psi_n(x_m)-1_{k_n}\Vert
\\&=\Vert\psi_n(x_1x_2)\psi_n(x_3)\cdots\psi_n(x_m)-1_{k_n}\Vert+o_\uf(\defect(\ph_n))
\\&\ \vdots
\\&=\Vert \psi(1_\Gamma)-1_{k_n}\Vert+o_\uf(\defect(\ph_n)).
\end{align*}
Since $\psi(1_\Gamma)=1_{k_n}$, we are done.
\end{proof}

The converse of Theorem \ref{speed} is also valid in the following sense.
\begin{prop}\label{omvendtverden}
Let $\Gamma=\langle S\mid R\rangle$ be a finitely presented group,
let $\ph_n,\psi_n: S\to {\mathrm U}(k_n)$ be asymptotic representations with respect to some family of submultiplicative, unitarily invariant norms and suppose
\begin{itemize}
\item $\dist(\ph_n,\psi_n)=O_\uf(\defect(\ph_n))$
and
\item $\defect(\psi_n)=o_\uf(\defect(\ph_n))$.
\end{itemize}
Then, the 2-cocycle $\alpha$ associated to $(\varphi_n)_{n \in \mathbb N}$ is trivial in $H^2(\Gamma,\Matuf)$.
In particular, if $\ph_n$ is sufficiently close to a homomorphism, $\alpha$ is trivial.
\end{prop}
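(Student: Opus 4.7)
The plan is to construct an explicit $1$-cochain $\beta\colon\Gamma\to\Matuf$ realising $\alpha=d^{1}\beta$. The central observation is that the analogous Hochschild cocycle formed from $\psi_n$, but still normalised by $\defect(\ph_n)$, vanishes in the ultraproduct: by Lemma~\ref{relfact} applied to $\psi_n$ together with $\sigma(g)\sigma(h)\sigma(gh)^{-1}\in\llangle R\rrangle$, one has
\[\lVert\tilde\psi_n(g)\tilde\psi_n(h)-\tilde\psi_n(gh)\rVert=O_\uf(\defect(\psi_n))=o_\uf(\defect(\ph_n))\]
for every $g,h\in\Gamma$. Thus the difference of the $\ph$- and $\psi$-cocycles captures all of $c$ in the limit.

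First I would establish a word-length bound: fixing a representative $\sigma(g)=s_1\cdots s_m\in\mathbb F_S$ for each $g\in\Gamma$ and telescoping using submultiplicativity and unitary invariance (each partial product being unitary, so leaves $\lVert\cdot\rVert$ invariant) gives
\[\lVert\tilde\ph_n(g)-\tilde\psi_n(g)\rVert\leq m\cdot\dist(\ph_n,\psi_n)+O_\uf(\defect(\ph_n))=O_\uf(\defect(\ph_n)),\]
the additive correction absorbing the minor order-two adjustment in the definitions of $\tilde\ph_n$ and $\tilde\psi_n$. Hence $b_n(g):=(\tilde\ph_n(g)-\tilde\psi_n(g))/\defect(\ph_n)$ is a bounded sequence and defines $b(g)\in\Matuf$. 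Note also that $\ph_\uf=\psi_\uf$ because $\dist(\ph_n,\psi_n)\to 0$.

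The key identity then follows from the algebraic decomposition
\[\tilde\ph_n(g)\tilde\ph_n(h)-\tilde\psi_n(g)\tilde\psi_n(h)=\tilde\ph_n(g)(\tilde\ph_n(h)-\tilde\psi_n(h))+(\tilde\ph_n(g)-\tilde\psi_n(g))\tilde\psi_n(h).\]
Subtracting $\tilde\ph_n(gh)-\tilde\psi_n(gh)$, dividing by $\defect(\ph_n)$, and passing to the ultraproduct (using that the $\psi$-cocycle vanishes by the first paragraph, and that the left/right actions of $\Uniuf$ on $\Matuf$ are well-defined by submultiplicativity), yields
\[c(g,h)=\ph_\uf(g)b(h)+b(g)\ph_\uf(h)-b(gh)\qquad\text{in }\Matuf.\]
Setting $\beta(g):=b(g)\ph_\uf(g)^*$ and right-multiplying by $\ph_\uf(gh)^*=\ph_\uf(h)^*\ph_\uf(g)^*$ gives
\[\alpha(g,h)=c(g,h)\ph_\uf(gh)^*=\ph_\uf(g)\beta(h)\ph_\uf(g)^*+\beta(g)-\beta(gh),\]
which is precisely $(d^{1}\beta)(g,h)$. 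Hence $[\alpha]=0$ in $H^{2}(\Gamma,\Matuf)$. The ``In particular'' clause follows by taking $\psi_n=\pi_n|_S$ for a sequence of genuine homomorphisms $\pi_n$ at distance $O_\uf(\defect(\ph_n))$ from $\ph_n$; then $\defect(\psi_n)=0$ trivially fulfils the second hypothesis.

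I do not anticipate a significant obstacle. The only real subtlety is the bookkeeping of asymptotic estimates: one must check that the word-length constant $m=|\sigma(g)|$, though depending on $g$, is independent of $n$ and so does not interfere with the $n$-asymptotics, and that the remainder terms involving $\tilde\psi_n$ really are $o_\uf(\defect(\ph_n))$ after division --- this is precisely where the $o_\uf$-hypothesis on $\defect(\psi_n)$ is used.
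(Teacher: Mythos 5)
Your proof is correct and follows essentially the same route as the paper: define the $1$-cochain from the normalised difference $\gamma_n(g)=(\tilde\ph_n(g)-\tilde\psi_n(g))/\defect(\ph_n)$, verify the Hochschild coboundary identity $c(g,h)=\ph_\uf(g)\gamma(h)-\gamma(gh)+\gamma(g)\ph_\uf(h)$ via the same algebraic decomposition (using the hypothesis $\defect(\psi_n)=o_\uf(\defect(\ph_n))$ to kill the $\tilde\psi$-cocycle term), and pass to $\beta(g)=\gamma(g)\ph_\uf(g)^*$. The only difference is that you spell out the word-length telescoping argument showing $\|\tilde\ph_n(g)-\tilde\psi_n(g)\|=O_\uf(\defect(\ph_n))$ for general $g\in\Gamma$, a point the paper's proof leaves implicit when asserting that $\gamma_n$ is essentially bounded.
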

\begin{proof}
If $\defect(\ph_n)=0$ for most $n\in\NN$ there is nothing to prove, so let us assume this is not the case.
Let $\tilde\ph_n,\tilde\psi_n\colon \Gamma\to\Uni(k_n)$ be the induced maps we get by fixing a section $\Gamma\to \mathbb F_S$ as explained in the beginning of this section. We note that the sequences $\tilde\ph_n$ and $\tilde\psi_n$ induce the same map $\ph_\uf$ in the limit.
Define
\[\gamma_n(g)=\frac{\tilde\ph_n(g)-\tilde\psi_n(g)}{\defect(\ph_n)}\]
for $n$ with $\defect(\ph_n)>0$ and $\gamma_n(g)=0$ otherwise. By the first bullet in our assumptions, $\gamma_n$ is essentially bounded, so it defines an element $\gamma(g)\in\Matuf$.
If we prove that
\[c(g,h)=\ph_\uf(g)\gamma(h)-\gamma(gh)+\gamma(g)\ph_\uf(h),\]
it will follow easily that $\beta(g):=\gamma(g)\ph_\uf(g)^*$ will
satisfy $d\beta=\alpha$. First note that
it follows from the second bullet in our assumptions that for every $g,h\in\Gamma$
\[\Vert \tilde\psi_n(gh)-\tilde\psi_n(g)\tilde\psi_n(h)\Vert=o_\uf(\defect(\ph_n)),\]
thus
\begin{align*}
\defect(\ph_n)\cdot(\tilde\ph_n(g)&\gamma_n(h)-\gamma_n(gh)+\gamma_n(g)\tilde\psi_n(h))
\\&= \tilde\ph_n(g)\tilde\ph_n(h)-\tilde\ph_n(g)\tilde\psi_n(h)-\tilde\ph_n(gh)+\tilde\psi_n(gh)
\\&+\tilde\ph_n(g)\tilde\psi_n(h)-\tilde\psi_n(g)\tilde\psi_n(h)
\\&=\tilde\ph_n(g)\tilde\ph_n(h)-\tilde\ph_n(gh)+\tilde\psi_n(gh)-\tilde\psi_n(g)\tilde\psi_n(h)
\\&=\defect(\ph_n)\cdot c_n(g,h)+o_\uf(\defect(\ph_n)).
\end{align*}
Now the result follows by dividing by $\defect(\ph_n)$ (which is possible for most $n$) and taking the limit.
\end{proof}

It is now clear that we are in need of large classes of groups for which general vanishing results for the second cohomology with Banach or Hilbert space coefficients can be proven. This will be the subject of the next section.
But first let us mention an alternative approach that can be used to prove Theorem \ref{speed}.

\subsection{Asymptotic representations and extensions}\label{altsec}
As mentioned in Section \ref{sec:cohom}, the second cohomology characterizes extensions of $\Gamma$ with abelian kernel and that in this picture coboundaries correspond to splitting extensions.
Thus Theorem \ref{speed} and Corollary \ref{omvendtverden} show that finding the improved $\ph_n'$ is equivalent to finding a splitting for a certain extension.
The connection between asymptotic representations and extensions can be seen directly without going through the above computations, and this idea can actually be used to prove Theorem \ref{speed}.
Since this approach is very illustrative (it shows, for instance, very clearly what rôle submultiplicativity plays), we sketch the proof.

We retain the assumptions from Section \ref{maryassumption} and introduce some more notation.
Letting $\eps_n:=\defect(\ph_n)$, for $n\in\NN$, we define
\[{\rm N}(O_\uf(\eps_n))=\Big\{(u_n)_{n\in\NN}\in\prod_{n\in\NN}\Uni(k_n)\mid \lVert u_n-1_{k_n}\rVert=O_\uf(\eps_n)\Big\}\]
and
\[\Uni(O_\uf(\eps_n))=\prod_{n\in\NN} \Uni(k_n)/N(O_\uf(\eps_n)).\]
Similarly, we define ${\rm N}(o_\uf(\eps_n))$ and
$\Uni(o_\uf(\eps_n))$. 
We saw that the asymptotic representation $(\ph_n)_{n\in\NN}$ induces a homomorphism
$\ph_\uf:\Gamma\to \Uniuf$, but Lemma \ref{relfact} actually implies the existence of an induced homomorphism
\[\hat\ph_\uf\colon \Gamma\to \Uni(O_\uf(\eps_n)).\]
Now we observe that the existence of $\ph_n'\colon S\to\Uni(k_n)$ with $\dist(\ph_n,\ph'_n)=O_\uf(\defect(\ph_n))$ and $\defect(\ph_n')=o_\uf(\defect(\ph_n))$ as in Theorem \ref{speed} is equivalent to the existence of a lift $\ph_\uf'$ of $\hat\ph_\uf$:
 \begin{equation*}\label{diagram1}
\xymatrix{         & \Uni(o_\uf(\eps_n)) \ar@{->}[dd]\\
    \Gamma \ar@{->}^{\ph_\uf'}[ru]\ar@{->}^{\hat\ph_\uf}[rd] & \\
                    & \Uni(O_\uf(\eps_n))
         }  
\end{equation*} 
We also see that the map $\hat\ph_\uf$ fits into the following commutative diagram
\begin{equation*}\label{diagram3}
\xymatrix{         
1\ar@{->}[r] & N\ar@{->}[r] & \Uni(o_\uf(\eps_n))\ar@{->}^\psi[r]  & \Uni(O_\uf(\eps_n)) \ar@{->}[r] & 1 \\
1\ar@{->}[r] & N\ar@{->}[r]\ar@{=}[u] & \hat\Gamma\ar@{->}[r]\ar@{->}[u]       & \Gamma\ar@{->}^{\hat\ph_\uf}[u]\ar@{->}[r] & 1
         }  
\end{equation*}
where $\hat\Gamma$ is the pullback through $\hat\ph$ and $\psi$ and 
\[N:={\rm N}(O_\uf(\eps_n))/{\rm N}(o_\uf(\eps_n)).\]
Combining these two observations, it easily follows that $\ph_n$ can be improved to $\ph_n'$ if and only if the bottom row in the latter diagram splits.
Now, since $\lVert\cdot\rVert$ is submultiplicative the group $N$ is actually abelian. Indeed, for all $T,S\in\Uni(k)$, we have that
\begin{align*}
\lVert TST^*S^*-1_{k}\rVert&=\lVert TS-ST\rVert
\\&=\lVert (T-1_{k})(S-1_k)-(S-1_k)(T-1_k)\rVert
\\&\leq 2\lVert T-1_k\rVert\lVert S-1_k\rVert,
\end{align*}
so if $(T_n)_{n\in\NN},(S_n)_{n\in\NN}\in N(O(\eps_n))$ then 
\[(T_nS_nT_n^*S_n^*)_{n\in\NN}\in N(O(\eps_n^2))\del N(o(\eps_n)),\]
whence the claim follows.
Hence, as explained in Section \ref{sec:cohom}, the extension $1\to N\to\hat\Gamma\to\Gamma\to1$ corresponds to an element $[\hat\alpha]\in H^2(\Gamma,N)$,
and we conclude that $\ph_n$ can be improved if and only if $[\hat\alpha]=[0]$.
Now, the coefficients $N$ are not exactly the same as $\Matuf$ in Theorem \ref{speed}, but with a little effort, one can prove that $N$ is a real Banach space (or a real Hilbert space in the case $\lVert\cdot\rVert=\frob\cdot$) with an isometric $\Gamma$-action and the existence of an equivariant homomorphism $\theta\colon N\to\Matuf$ such that $[\theta\circ\hat\alpha]=[\alpha]$.

\begin{rem}
We note that this approach also works for the most part if $\lVert\cdot\rVert$ is not submultiplicative. In this case, however, the group $N$ is not abelian and the second cohomology with non-abelian coefficients is much less tractable in general.
\end{rem}

This alternative approach to the problem at hand is rather conceptual and elegant, but also the proof that we chose to present in detail has its merits. The cocycle $\alpha$ can be computed directly from $(\ph_n)_{n\in\NN}$, and in cases where the associated 1-cochain $\beta$ can be computed explicitly from $\alpha$, this gives us an explicit expression for $\ph_n'$.

\section{Cohomology vanishing and examples of $n$-Kazhdan groups} \label{hika}
Recall that if $\Gamma$ is a finitely (or, more generally, compactly) generated group, then $\Gamma$ has Kazhdan's Property (T) if and only if the first cohomology $H^1(\Gamma,\mathcal H_\pi)=0$ for every unitary representation $\pi\colon \Gamma\to\Uni(\mathcal H_\pi)$ on a Hilbert space $\mathcal H_\pi$, see \cite{MR2415834} for a proof and more background information.
We will consider groups for which the higher cohomology groups vanish. Higher dimensional vanishing phenomena have been studied in various articles, see for example \cites{MR3284391, MR1721403, MR1465598
,MR1762517, MR1946553, MR3343347, MR1651383}.

We propose the following terminology.
\begin{defi}
Let $n\in\NN$. A group $\Gamma$ is called \emph{$n$-Kazhdan} if $H^n(\Gamma,\mathcal H_\pi)$ vanishes for all unitary representations $(\pi,\mathcal H_\pi)$ of $\Gamma$. We call $\Gamma$ \emph{strongly $n$-Kazhdan}, if $\Gamma$ is $k$-Kazhdan for $k=1,\ldots, n$.
\end{defi}
So 1-Kazhdan is the  Kazhdan's classical Property (T).
See \cite{MR3284391, MR3343347} for discussions of other related higher dimensional analogues of Property (T). It will be central in our proof that by an application of the open mapping theorem, vanishing of cohomology with Hilbert space coefficients implies that cocycles are coboundaries with control on the norms. This is explained in the following proposition and its corollary, where we use the terminology introduced in Equation \eqref{eqdefnorm}.

\begin{prop}\label{vanishprop}
Let $n\in\NN$, let $\Gamma$ be a countable group, let $\pi\colon \Gamma\to\Uni(\mathcal H_\pi)$ be a unitary representation, and assume that $H^n(\Gamma,\mathcal H_\pi)=\{0\}$. Then for every finite set $F\del \Gamma^{n-1}$  there exist a finite set $F_\pi\del \Gamma^{n}$ and a constant $C_{\pi,F}\geq 0$ such that for every cocycle $z\in Z^{n}(\Gamma,\mathcal H_\pi)$ there is an element $b\in C^{n-1}(\Gamma,\mathcal H_\pi)$ such that $z=d^{n-1} b$ and $\Vert b\Vert_{F} < C_{\pi,F}\Vert z\Vert_{F_\pi}$.
\end{prop}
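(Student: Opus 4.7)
The plan is to interpret the statement as a quantitative form of the open mapping theorem applied to the coboundary operator $d^{n-1}$. Both $C^{n-1}(\Gamma,\mathcal H_\pi)$ and $C^n(\Gamma,\mathcal H_\pi)$ are Fréchet spaces: the defining family of seminorms $\lVert\cdot\rVert_F$ is separating, and since $\Gamma$ is countable the already-countable subfamily $\lVert\cdot\rVert_{\{x\}}$ for $x\in\Gamma^n$ generates the same topology, making the spaces metrizable (and completeness is immediate from pointwise convergence in $\mathcal H_\pi$). With respect to these topologies $d^{n-1}$ and $d^n$ are continuous, so $K:=\ker d^{n-1}$ is closed in $C^{n-1}$ and $Z^n=\ker d^n$ is closed in $C^n$. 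Consequently $C^{n-1}/K$ is a Fréchet space whose quotient seminorms are $\lVert\bar a\rVert_F=\inf_{k\in K}\lVert a+k\rVert_F$.

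Next I would use the vanishing hypothesis $H^n(\Gamma,\mathcal H_\pi)=\{0\}$, which means $B^n=Z^n$. Thus $d^{n-1}$ induces a continuous linear bijection $\tilde d\colon C^{n-1}/K\to Z^n$. Invoking the open mapping theorem for Fréchet spaces, $\tilde d$ is an isomorphism of Fréchet spaces; in particular $\tilde d^{-1}$ is continuous. Translated back into the language of the defining seminorms, continuity of $\tilde d^{-1}$ at $0$ says exactly that for each finite $F\subseteq\Gamma^{n-1}$ there exist a finite $F_\pi\subseteq\Gamma^n$ and a constant $C'_{\pi,F}$ with $\lVert\tilde d^{-1}(z)\rVert_F\leq C'_{\pi,F}\lVert z\rVert_{F_\pi}$ for every $z\in Z^n$.

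Finally, given $z\in Z^n$, I would pick a representative $b\in C^{n-1}$ of the class $\tilde d^{-1}(z)$ whose $F$-seminorm is close enough to the infimum; for instance, chose $b$ with $\lVert b\rVert_F<\lVert\tilde d^{-1}(z)\rVert_F+\lVert z\rVert_{F_\pi}$. Then $d^{n-1}b=z$ and $\lVert b\rVert_F<(C'_{\pi,F}+1)\lVert z\rVert_{F_\pi}$, so setting $C_{\pi,F}:=C'_{\pi,F}+1$ yields the desired strict inequality (the case $z=0$ is trivial: take $b=0$).

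The main technical point is simply making sure the functional-analytic setup is sound, namely that $K$ is closed and the quotient inherits the Fréchet structure so that the open mapping theorem applies. Everything else is a bookkeeping translation between the abstract continuity statement for $\tilde d^{-1}$ and the concrete seminorm estimate, which is the standard way in which vanishing of continuous cohomology upgrades to a uniform bound.
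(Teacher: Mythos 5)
Your proof is correct and follows essentially the same strategy as the paper: observe that $C^{n-1}$, $C^n$, and the closed subspace $Z^n$ are Fréchet spaces, use $H^n=0$ to get surjectivity of $d^{n-1}$ onto $Z^n$, and invoke the open mapping theorem. The only cosmetic difference is that you route through the quotient $C^{n-1}/\ker d^{n-1}$ and invert the induced bijection, whereas the paper applies the open-mapping form directly to the surjection $d^{n-1}\colon C^{n-1}\to Z^n$ and reads off the containment of basic neighborhoods; these are equivalent phrasings of the same theorem.
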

\begin{proof}
By definition of the topology on $C^n(\Gamma,\mathcal H_\pi)$, the basic open sets are given by
\[U_{\delta,F'}=\{f\in C^n(\Gamma,\mathcal H_\pi)\mid \Vert f\Vert_{F'}<\delta\},\]
for a finite $F'\del\Gamma^n$ and $\delta>0$.
Since the map $d^{n-1}\colon C^{n-1}(\Gamma,\mathcal H_\pi)\to Z^n(\Gamma,\mathcal H_\pi)$ is linear, bounded and surjective, the open mapping theorem applies (see \cite{MR1157815}), so there are $C_{\pi,F}>0$ and $F_\pi\del \Gamma^n$ such that
\[U_{C_{\pi,F}^{-1},F_\pi}\cap Z^{n}(\Gamma,\mathcal H_\pi)\del d^{n-1}(U_{1,F}).\] 
In other words, if $z\in Z^n(\Gamma,\mathcal H_\pi),\Vert z\Vert_{F_\pi}=1$, then $C_{\pi,F}^{-1} z\in U_{C_{\pi,F}^{-1},F_\pi}$, so there is
$b\in C^{n-1}(\Gamma,\mathcal H_\pi)$ such that $d^{n-1} b=z$ and $\Vert b\Vert_F<C_{\pi,F}= C_{\pi,F}\Vert z\Vert$. This proves the claim.
\end{proof}

We need the fact that if $H^2(\Gamma,\mathcal H_\pi)$ vanishes universally, the set $F_\pi$ and the bound $C_{\pi,F}$ can be chosen universally for all unitary representations $\pi$. This is the consequence of an easy diagonalisation argument.

\begin{cor}\label{vanishcor}
Let $n\in\NN$ and $\Gamma$ be a countable $n$-Kazhdan group. Then for every finite set $F\del \Gamma^n$ there are a finite set $F_0\del \Gamma^{n-1}$ and a constant $C_F\geq 0$ such that for all unitary representations $\pi$ of $\Gamma$ and all cocycles $z\in Z^n(\Gamma,\mathcal H_\pi)$ there is an element $b\in C^{n-1}(\Gamma,\mathcal H_\pi)$ such that $z=d^{n-1} b$ and $\Vert b\Vert_{F}< C_F\Vert z\Vert_{F_0}$.
\end{cor}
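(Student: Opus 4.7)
The plan is a diagonal argument: assume that the uniformity claimed in the corollary fails, collect the resulting countable family of ``bad'' examples into a single unitary representation via a Hilbert direct sum, and derive a contradiction by applying the a priori non-uniform Proposition \ref{vanishprop} to that one representation. Suppose the conclusion fails, so there exists a finite $F\del\Gamma^{n-1}$ such that for every finite $F_0\del\Gamma^n$ and every $C>0$ one can find a unitary representation $\pi$ of $\Gamma$ and a cocycle $z\in Z^n(\Gamma,\mathcal H_\pi)$ every primitive $b$ of which satisfies $\Vert b\Vert_F\geq C\Vert z\Vert_{F_0}$. Since $\Gamma$ is countable, fix an exhaustion $F_0^{(1)}\del F_0^{(2)}\del\cdots$ of $\Gamma^n$ by finite sets. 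Applying the failure with $F_0=F_0^{(k)}$ and $C=k$, and rescaling, produces for each $k\in\NN$ a unitary representation $\pi_k$ on a Hilbert space $\mathcal H_k$ together with a cocycle $z_k\in Z^n(\Gamma,\mathcal H_k)$ satisfying $\Vert z_k\Vert_{F_0^{(k)}}=1$ and $\Vert b_k\Vert_F\geq k$ for every primitive $b_k$ of $z_k$.

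Assemble these into the Hilbert direct sum $(\pi,\mathcal H):=\bigoplus_k(\pi_k,\mathcal H_k)$, and for each $m\in\NN$ let $\tilde z_m\in Z^n(\Gamma,\mathcal H)$ denote the image of $z_m$ under the isometric $\Gamma$-equivariant embedding $\mathcal H_m\hookrightarrow\mathcal H$. Applying Proposition \ref{vanishprop} to the single representation $\pi$ and the fixed set $F$ yields a finite $F_\pi\del\Gamma^n$ and a constant $C_{\pi,F}$ that control every cocycle in $Z^n(\Gamma,\mathcal H)$ simultaneously; in particular there exists $b^{(m)}\in C^{n-1}(\Gamma,\mathcal H)$ with $d^{n-1}b^{(m)}=\tilde z_m$ and $\Vert b^{(m)}\Vert_F<C_{\pi,F}\Vert z_m\Vert_{F_\pi}$. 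The orthogonal projection $P_m\colon\mathcal H\to\mathcal H_m$ is $\Gamma$-equivariant, of operator norm one, and commutes with $d^{n-1}$, so $P_m\circ b^{(m)}$ is a primitive of $z_m$ with $\Vert P_m\circ b^{(m)}\Vert_F\leq\Vert b^{(m)}\Vert_F$. Choosing $m$ so that both $F_\pi\del F_0^{(m)}$ and $m>C_{\pi,F}$—whence $\Vert z_m\Vert_{F_\pi}\leq\Vert z_m\Vert_{F_0^{(m)}}=1$—we obtain
\[m\leq\Vert P_m\circ b^{(m)}\Vert_F\leq\Vert b^{(m)}\Vert_F<C_{\pi,F}<m,\]
a contradiction. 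Hence uniform data $F_0,C_F$ must exist for every finite $F$.

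The only genuinely non-formal step is the absorption of the countable family of candidate counterexamples into a single representation: Proposition \ref{vanishprop} depends a priori on $\pi$, but the $n$-Kazhdan hypothesis is uniform in $\pi$, and passing to the direct sum collapses this dependence precisely because the countable family becomes a single object. The remaining ingredients—commutation of $P_m$ with the coboundary operator, homogeneity under rescaling of $z$, and norm non-increase under orthogonal projection—are formal.
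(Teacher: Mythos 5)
Your proof is correct and realizes precisely the ``easy diagonalisation argument'' the paper alludes to: assemble a countable family of putative worst-case representations into one Hilbert direct sum, invoke Proposition \ref{vanishprop} for that single representation, and project back onto each summand to reach a contradiction. The only cosmetic point is that the rescaling to $\Vert z_k\Vert_{F_0^{(k)}}=1$ tacitly assumes $\Vert z_k\Vert_{F_0^{(k)}}>0$; this is harmless under the obviously intended (non-strict) reading of the inequalities, and the same edge case is already present in Proposition \ref{vanishprop}.
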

%

We also observe the following extension proposition.

\begin{prop}\label{kazhext}
Consider a short exact sequence of groups.
\[1\rightarrow \Lambda\rightarrow \tilde\Gamma \rightarrow \Gamma\rightarrow 1.\]
If $\Lambda$ is strongly $n$-Kazhdan and $\Gamma$ is $n$-Kazhdan, then $\tilde\Gamma$ is also $n$-Kazhdan. In particular, this applies if $\Lambda$ or $\Gamma$ is finite.
\end{prop}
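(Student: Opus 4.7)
The plan is to invoke the Lyndon--Hochschild--Serre spectral sequence associated to the short exact sequence, namely
\[E_2^{p,q}=H^p(\Gamma,H^q(\Lambda,\mathcal H_\pi))\Longrightarrow H^{p+q}(\tilde\Gamma,\mathcal H_\pi),\]
for an arbitrary unitary representation $\pi\colon\tilde\Gamma\to\Uni(\mathcal H_\pi)$. Since the coefficient module is just an abstract abelian group with a $\tilde\Gamma$-action, the purely algebraic LHS spectral sequence suffices; no Banach-module refinements are needed here. It is enough to show that every entry on the antidiagonal $p+q=n$ of the $E_2$-page vanishes.

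For the entries with $1\leq q\leq n$, the restriction $\pi|_\Lambda$ is again a unitary representation, so the assumption that $\Lambda$ is strongly $n$-Kazhdan yields $H^q(\Lambda,\mathcal H_\pi)=0$ for each such $q$, and hence $E_2^{p,q}=0$. It remains to deal with $E_2^{n,0}=H^n(\Gamma,\mathcal H_\pi^\Lambda)$. The key observation is that $\mathcal H_\pi^\Lambda$ is a unitary $\Gamma$-representation on a Hilbert space: unitarity of $\pi|_\Lambda$ makes $\mathcal H_\pi^\Lambda$ a closed subspace of $\mathcal H_\pi$; normality of $\Lambda$ in $\tilde\Gamma$ shows that $\mathcal H_\pi^\Lambda$ is $\tilde\Gamma$-invariant; and since $\Lambda$ acts trivially on it, the action factors through $\Gamma$ and stays unitary. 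Thus the hypothesis that $\Gamma$ is $n$-Kazhdan forces $E_2^{n,0}=0$, and the spectral sequence then gives $H^n(\tilde\Gamma,\mathcal H_\pi)=0$.

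For the "in particular" clause I would argue by an averaging projection. If $\Lambda$ is finite, then on any $\Lambda$-module that is a $\QQ$-vector space (and in particular on any Hilbert space) the projector $|\Lambda|^{-1}\sum_{\lambda\in\Lambda}\pi(\lambda)$ shows $H^q(\Lambda,\mathcal H_\pi)=0$ for all $q\geq 1$, so $\Lambda$ is automatically strongly $n$-Kazhdan and the main claim applies directly. If instead $\Gamma$ is finite, the same averaging argument gives $H^p(\Gamma,V)=0$ for every $p\geq 1$ and every complex $\Gamma$-module $V$; in the spectral sequence this kills the whole $p\geq 1$ half-plane, and the (partial) degeneration reduces the claim to $H^n(\tilde\Gamma,\mathcal H_\pi)\cong H^0(\Gamma,H^n(\Lambda,\mathcal H_\pi))$, which vanishes as soon as $\Lambda$ is $n$-Kazhdan.

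The only mildly delicate point is verifying that the coefficient module $\mathcal H_\pi^\Lambda$ that enters the $n$-Kazhdan hypothesis for $\Gamma$ really is a unitary Hilbert space representation of $\Gamma$; once this is checked, everything else is a routine application of LHS and of the standard averaging trick for finite groups. I would not expect any genuine obstacle beyond bookkeeping.
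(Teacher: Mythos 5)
Your proof is correct and follows essentially the same route as the paper: invoke the Lyndon--Hochschild--Serre spectral sequence, kill the $q\geq 1$ terms on the $p+q=n$ antidiagonal using strong $n$-Kazhdan for $\Lambda$, and kill the $q=0$ term by observing that $\mathcal H_\pi^\Lambda$ is a unitary $\Gamma$-representation to which $n$-Kazhdan for $\Gamma$ applies. Your elaboration of the "in particular" clause via averaging is a correct filling-in of a detail the paper leaves to the reader.
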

\begin{proof}
By the Hochschild-Serre spectral sequence \cite{MR1324339}, it is enough to show that $H^k(\Gamma,H^l(\Lambda,\mathcal H_{\pi|_\Lambda}))$ vanishes for all $k,l\in\NN$ with $k+l= n$.
If $l>0$, then $H^l(\Lambda,\mathcal H_{\pi|_\Lambda})$ vanishes. For $l=0$ and $k=n$, we have $H^0(\Lambda,\mathcal H_{\pi|_\Lambda})=\mathrm{Fix}(\pi|_\Lambda)$ (the set of fixed vectors in $\mathcal H_{\pi|_\Lambda}$), which is a Hilbert space, and the induced action of $\Gamma$ is a unitary representation, so we conclude that $H^n(\Gamma,H^0(\Lambda,\mathcal H_{\pi|_\Lambda}))$ vanishes.
\end{proof}

In view of the previous section, it is natural to ask if there exists a non-residually finite group, such that $H^2(\Gamma,A)$ vanishes for all $C^*$-algebras $A$ equipped with an action of $\Gamma$ by automorphisms. We are not able to answer this questions, however, one can show that $H^1(\Gamma,\ell^{\infty}(\Gamma))$ does not vanish for any infinite group, which makes a positive answer somewhat unlikely. Here, we view $\ell^{\infty}(G)$ as a $G$-module with respect to the right translation action. Indeed, let $d \colon \Gamma \times \Gamma \to \NN$ be a proper left-invariant metric. Then, $c(g):= (h \mapsto d(1_\Gamma,h) - d(1_{\Gamma},hg))_{h \in \Gamma}$ defines a cocycle  $c \colon \Gamma \to \ell^{\infty}(\Gamma)$ which cannot be the boundary of an element in $\ell^{\infty}(\Gamma)$ if $\Gamma$ is infinite.

\subsection{Higher rank $p$-adic lattices are $2$-Kazhdan} \label{sec:hk}

Finally, this section provides examples, or every $n\geq 2$, of groups which are $n$-Kazhdan. The results are essentially known and we recall them in detail for convenience.

Let $K$ be a non-archimedean local field of residue class $q$, i.e.\ if $\mathcal O \subset K$ is the ring of integers and $\mathfrak m \subset \mathcal O$ is its unique maximal ideal, then $q= |\mathcal O/\mathfrak m|$. Let $\mathbf{G}$ be a simple $K$-algebraic group of $K$-rank $r$ and assume that $r \geq 1$. The group $G:= \mathbf G(K)$ acts on the associated Bruhat-Tits building $\mathcal B$. For more information on the theory of buildings, see \cite{MR2439729}. The latter is an infinite, contractible, pure simplicial complex of dimension $r$, on which $G$ acts transitively on the chambers, i.e.\ the top-dimensional simplices. Let $\Gamma$ be a uniform lattice in $G$, i.e. a discrete cocompact subgroup of $G$. When $\Gamma$ is also torsion free (which can always be achieved by replacing $\Gamma$ by a finite index subgroup), then the quotient $X:= \Gamma \backslash \mathcal B$ is a finite $r$-dimensional simplicial complex and $\Gamma= \pi_1(X).$ In particular, the group $\Gamma$ is finitely presented. We will use the following theorem which essentially appears in work of Ballmann and \'Swi\polhk atkowski \cite{MR1465598} building on previous work of Garland \cite{MR0320180}.

\begin{theorem} \label{higherkazhdan}
For every natural number $r \geq 2$, there exists $q_0(r) \in \mathbb N$ such that the following holds. If $q \geq q_0(r)$ and $G$ and $\Gamma$ are as above, then $\Gamma$ is strongly $(r-1)$-Kazhdan. In particular, if $r\geq 3$, then $\Gamma$ is $2$-Kazhdan.
\end{theorem}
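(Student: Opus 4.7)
The plan is to deduce the theorem from Garland's local-to-global method for cohomology vanishing, as refined by Ballmann--\'Swi\polhk atkowski \cite{MR1465598}. The starting point is that the Bruhat--Tits building $\mathcal B$ of $G$ is contractible and the action of $\Gamma$ on $\mathcal B$ is proper and cocompact, so that for every unitary representation $\pi\colon\Gamma\to\Uni(\mathcal H_\pi)$ the group cohomology $H^{\bullet}(\Gamma,\mathcal H_\pi)$ is computed by the complex of $\Gamma$-equivariant, $\mathcal H_\pi$-valued cochains on $\mathcal B$. Passing to a torsion-free finite-index subgroup (which is harmless by Proposition \ref{kazhext}) we may assume $\Gamma$ acts freely, so that the relevant cochain complex is the complex $C^{\bullet}(X,\underline{\mathcal H_\pi})$ of cochains on the finite complex $X=\Gamma\backslash\mathcal B$ with coefficients in the flat bundle $\underline{\mathcal H_\pi}$ induced by $\pi$. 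Square-integrability is automatic here because $X$ is finite.

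The next step is to introduce, for each $0\le k\le r$, the combinatorial Hodge Laplacian $\Delta_k = d^{k-1}(d^{k-1})^*+(d^k)^*d^k$ on $k$-cochains (equipped with the natural inner product inherited from $\mathcal H_\pi$ and the simplex weights on $\mathcal B$) and to identify $H^k(\Gamma,\mathcal H_\pi)$ with $\ker\Delta_k$. The Garland philosophy is to bound $\Delta_k$ from below in terms of a \emph{local} Laplacian on the links of $(k-1)$-simplices of $\mathcal B$: if for every simplex $\tau$ of codimension $\ge k+1$ the reduced Laplacian $\tilde\Delta_\tau$ of the link $\mathrm{lk}(\tau)$ satisfies $\tilde\Delta_\tau \ge \lambda\cdot\mathrm{id}$ with $\lambda > 1-\tfrac{1}{k}$, then $\Delta_k$ is strictly positive on the full cochain space and hence $H^k(\Gamma,\mathcal H_\pi)=\{0\}$. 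I would carry this out by first proving it for trivial coefficients (standard combinatorial computation) and then observing that the inequalities are preserved pointwise when tensoring with $\mathcal H_\pi$, which is the key reason why the argument works uniformly in the representation.

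The main obstacle, and the only genuinely nontrivial input, is verifying the spectral gap hypothesis on links. The links of simplices of codimension $\ge 2$ in the Bruhat--Tits building $\mathcal B$ are finite spherical buildings of types associated to proper subgroups of the affine Weyl group of $\mathbf G$; by Feit--Higman/Garland (see also \cite{MR1465598}), the non-zero spectrum of their normalised Laplacians is bounded below by $1 - C/\sqrt{q}$ for a constant $C$ depending only on the type of the building (and hence only on $r$). Choosing $q_0(r)$ large enough so that $1 - C/\sqrt{q} > 1 - 1/k$ for every $1\le k\le r-1$, the hypothesis of the Garland criterion is met simultaneously for all these degrees, and we conclude $H^k(\Gamma,\mathcal H_\pi)=\{0\}$ for $1\le k\le r-1$ and every unitary $\pi$; i.e., $\Gamma$ is strongly $(r-1)$-Kazhdan. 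The special case $r\ge 3$ then gives that $\Gamma$ is $2$-Kazhdan, as claimed.
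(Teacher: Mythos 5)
Your proposal is correct and takes essentially the same route as the paper, which establishes the theorem by reducing to a torsion-free finite-index subgroup (via Proposition \ref{kazhext}), then citing Garland's cohomology-vanishing criterion in the form given by Ballmann--\'Swi\polhk atkowski (Theorem 2.5 of \cite{MR1465598}) together with Garland's spectral-gap estimates for links of Bruhat--Tits buildings (Lemmas 6.3 and 8.2 of \cite{MR0320180}). The paper's ``proof'' is essentially a list of these references; your write-up fills in the intermediate combinatorial Hodge-theoretic steps in the same spirit, including the observation that the Laplacian inequalities are preserved after tensoring with an arbitrary unitary $\Gamma$-module.
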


Recall, that being $1$-Kazhdan is equivalent to Kazhdan's property (T). As it is well known, $G$ and $\Gamma$ as above have property (T) for every $r \geq 2$ and for \emph{all} $q$. It is quite plausible that this is also true for in the context of the preceding theorem.

Note that such $\Gamma$ contains a finite-index torsion free group $\Lambda$. Proposition \ref{kazhext} implies that it suffices to prove that $\Lambda$ is $(r-1)$-Kazhdan. So one can assume that $\Gamma$ is torsion free.

Theorem \ref{higherkazhdan} for finite dimensional Hilbert spaces is Theorem 8.3 in the seminal paper of Garland \cite{MR0320180}. The general case is stated in the last paragraph of Section 3.1 on page 631 in the work of Ballmann-\'Swi\polhk atkowski \cite{MR1465598}. It is deduced from Theorem 2.5 there: that Theorem asserts \'a la Garland  \cite{MR0320180} that the desired cohomology vanishing follows from sharp estimates of the spectral gap of the local Laplacians, i.e. the Laplacians of the proper links of the complex. These estimates (called also \emph{p-adic curvature})  
are given in Lemma 6.3 and Lemma 8.2 in \cite{MR0320180}. So altogether Theorem \ref{higherkazhdan} is proven. The method and estimates of Garland are used also in \cite{MR1651383, MR1995802} and more recently \cite{MR3557457, MR3343347}.

Let us give the reader just a notational warning:  when we say rank (following the common practice nowadays) we mean the  $K$-rank of $G$ as a $p$-adic group (and we denoted it by $r$) and then it follows that the dimension of the associated Bruhat-Tits building is equal to $r$. Garland refers to the rank of the Tits system which in his notation he denotes $l+1$. Hence, our $r$ is equal to his $l$. 


\vspace{0.1cm}

It is very natural to wonder what happens in the analogous real case. It is worth noting that already $H^5({\rm SL}_n(\mathbb Z),\mathbb R)$ is non-trivial for $n$ large enough \cite{borel}; thus ${\rm SL}_n(\mathbb Z)$ fails to be $5$-Kazhdan for $n$ large enough. 
Similarly, note that $H^2({\rm Sp}(2n,\mathbb Z),\mathbb R)=\mathbb R$ for all $n \geq 2$  \cite{borel}, so that the natural generalization to higher rank lattices in real Lie groups has to be formulated carefully; maybe just by excluding an explicit list of finite-dimensional unitary representations.

\begin{question}
Is ${\rm SL}_n(\mathbb Z)$ $2$-Kazhdan (at least for large $n$)?
\end{question}

\section{Proofs of the main results}

In order to finish the proofs of Theorem \ref{main} and Theorem \ref{main2}, we need to show that finitely presented $2$-Kazhdan groups are Frobenius-stable and that some of them are not residually finite. The main result follows then from Corollary \ref{corka} and the constructions in Section \ref{kazhres}.

\subsection{The Frobenius-stability of $2$-Kazhdan groups.}
We now consider 2-Kazhdan groups and asymptotic representations with respect to the Frobenius norm. As $\prod_{n\to\uf}( {\rm M}_{k_n}(\mathbb C),\frob{\cdot})$ is a Hilbert space, the techniques of the Section \ref{sec:def} can be applied and the defect of every asymptotic representation can be diminished. We start by completing the proof of Theorem \ref{main2}.

\begin{theorem}\label{kazh2stab}
Let $\Gamma$ be a finitely presented group. If $\Gamma$ is 2-Kazhdan, then it is Frobenius-stable.
\end{theorem}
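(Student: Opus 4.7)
The plan is to apply Theorem \ref{speed} iteratively, exploiting the fact that the Frobenius norm makes the coefficient module a Hilbert space. Indeed, each $(\mathrm M_{k_n}(\mathbb C), \frob{\cdot})$ is a Hilbert space, hence so is the ultraproduct $\Matuf$, and the conjugation action of $\Gamma$ via $\ph_\uf$ is isometric (by unitary invariance of $\frob{\cdot}$), yielding a unitary representation on a Hilbert space. The $2$-Kazhdan hypothesis thus gives $H^2(\Gamma, \Matuf) = \{0\}$, so the associated cocycle $\alpha(\ph_n)$ is always trivial and Theorem \ref{speed} applies to any asymptotic representation $\ph_n \colon S \to \Uni(k_n)$.

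Starting from $\ph_n^{(0)} := \ph_n$ and iterating Theorem \ref{speed}, I would produce a sequence $(\ph_n^{(k)})_{k \ge 0}$ satisfying $\dist(\ph_n^{(k)}, \ph_n^{(k+1)}) = O_\uf(\defect(\ph_n^{(k)}))$ and $\defect(\ph_n^{(k+1)}) = o_\uf(\defect(\ph_n^{(k)}))$ at each step. To ensure the iteration behaves uniformly in $k$, I would invoke Corollary \ref{vanishcor}, which provides an open-mapping constant that is uniform across \emph{all} unitary $\Gamma$-representations, together with the pointwise bound $\lVert c^{(k)}(g,h)\rVert \le K_{g,h}$ from Lemma \ref{relfact} (which depends only on the expression of $\sigma(g)\sigma(h)\sigma(gh)^{-1}$ as a product of conjugates of relators). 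Combined, these yield a single constant $M = M(\Gamma, S, R)$ with $\dist(\ph_n^{(k)}, \ph_n^{(k+1)}) \le M \cdot \defect(\ph_n^{(k)})$ for every $k$.

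To finish, I would run a diagonal argument: for each $k$ the set $A_k := \{n : \defect(\ph_n^{(k+1)}) \le 2^{-k-1} \defect(\ph_n^{(k)})\}$ lies in $\uf$, and on $B_K := \bigcap_{k<K} A_k \in \uf$ one has the geometric control $\defect(\ph_n^{(K)}) \le 2^{-K(K-1)/2}\defect(\ph_n^{(0)})$ together with the summable bound $\sum_{k<K}\dist(\ph_n^{(k)},\ph_n^{(k+1)}) \le 2M\defect(\ph_n^{(0)})$. Choosing $K(n) := \max\{K : n \in B_K\}$ gives $K(n) \to \infty$ along $\uf$, and by using compactness of $\Uni(k_n)^S$ together with a coupled diagonalization in $k$ and $n$, I would extract a limit $\pi_n \in \Hom(\Gamma, \Uni(k_n))$ with $\dist(\ph_n, \pi_n) = O(\defect(\ph_n))$, yielding $\lim_{n \to \uf} \homdist(\ph_n) = 0$, as required. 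The main obstacle, I expect, is the final passage to the limit: since the improvement of Theorem \ref{speed} is only $o_\uf$ and not quantitative (not, for instance, quadratic in $\defect$), producing a \emph{genuine} homomorphism rather than merely an asymptotic representation with vanishingly small defect requires a careful coupling of the $k$-iteration with the $n$-limit along $\uf$, exploiting the freedom in the choice of the corrections $\beta^{(k)}$ from Proposition \ref{iota}.
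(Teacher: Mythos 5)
Your first two paragraphs are on target: passing to the Hilbert space $\prod_{n\to\uf}(\mathrm M_{k_n}(\mathbb C),\frob{\cdot})$, using $2$-Kazhdan to kill $H^2$, and combining Corollary~\ref{vanishcor} with the cocycle bound~\eqref{cbound} and Lemma~\ref{relfact} to get a uniform constant for the correction $\beta$ are all exactly what the paper does. But your final step has a genuine gap, and you are right to be uneasy about it: the iteration-and-diagonalize plan never actually produces a homomorphism, so it cannot close the proof.

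Here is the difficulty, concretely. Each application of Theorem~\ref{speed} reduces the defect by a factor that is $o_\uf(1)$ \emph{in $n$}, not in $k$. So for a \emph{fixed} $n$ there is no reason $\defect(\ph_n^{(k)})\to 0$ as $k\to\infty$; the sets $A_k$ where the geometric improvement holds vary with $k$, and $\uf$ is only closed under finite intersections, so $\bigcap_k A_k$ need not be in $\uf$. Your diagonal workaround $n\mapsto\ph_n^{(K(n))}$ does produce a sequence with $\defect(\ph_n^{(K(n))})=o_\uf(\defect(\ph_n))$ and $\dist(\ph_n,\ph_n^{(K(n))})=O_\uf(\defect(\ph_n))$ --- but at that point you are staring at yet another asymptotic representation with small defect, not at a genuine homomorphism. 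To conclude $\homdist(\ph_n)\to 0$ you would need to know that small defect implies small $\homdist$, which is exactly the stability statement you are trying to prove; the argument is circular. Compactness of $\Uni(k_n)^S$ in $k$ for fixed $n$ does not save you either, because without decay in $k$ the subsequential limit need not have zero defect.

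The paper avoids iteration entirely by an extremal argument. With the uniform constant $C$ from Corollary~\ref{vanishcor} in hand, set $\theta(\ph):=\homdist(\ph)-2C\defect(\ph)$, fix a sequence $\eps_n\downarrow 0$, and for each $n$ use compactness of the set of $\eps_n$-almost homomorphisms $S\to\Uni(k_n)$ to choose $\ph_n$ \emph{maximizing} $\theta$. Apply one improvement step (Proposition~\ref{closeto} and Theorem~\ref{speed}) to this maximizer: for most $n$ you get $\ph_n'$ with $\dist(\ph_n,\ph_n')\le C\defect(\ph_n)$ and $\defect(\ph_n')\le\frac14\defect(\ph_n)$. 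The triangle inequality gives $\homdist(\ph_n)\le\homdist(\ph_n')+C\defect(\ph_n)$, and maximality gives $\theta(\ph_n')\le\theta(\ph_n)$; combining these two forces $\defect(\ph_n)\le 2\defect(\ph_n')\le\frac12\defect(\ph_n)$, hence $\defect(\ph_n)=0$ for most $n$. So the maximizer is already a genuine homomorphism, $\theta(\ph_n)\to 0$, and by maximality $\theta(\psi_n)\to 0$ for every $\eps_n$-almost representation $\psi_n$. The single improvement step suffices because it is applied to the worst case and produces a contradiction; one never has to take a limit in $k$ at all.
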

\begin{proof}
Let $\Gamma=\langle S\mid R\rangle$.
As mentioned, the ultraproduct \[\mathrm M_{\uf}^{\frob{\cdot}}:=\prod_{n\to\uf}( {\rm M}_{k_n}(\mathbb C),\frob{\cdot})\] is a Hilbert space and $\Gamma$ acts on this space by invertible isometries, i.e.\ by unitaries, so $H^2(\Gamma,\mathrm M_{\uf}^{\frob{\cdot}})$ vanishes.
By Corollary \ref{vanishcor} together with the bounds from Equation (\ref{cbound}) there is a constant $C$ such that for all asymptotic representations $\ph_n\colon \Gamma\to {\rm U}(k_n)$ with respect to $\frob{\cdot}$, we can choose the associated 1-cochain $\beta$ so that it satisfies
\[2\max_{s\in S}\frob{\beta(s)}\leq C.\]
Define the quantity \[\theta(\ph):=\homdist(\ph)-2C\defect(\ph)\] for any map $\ph\colon S\to\Uni(k)$ (for any $k\in\NN$). We note that if $\ph_n:S\to{\rm U}(k_n)$ is any asymptotic representation, then $\lim_{n\to\uf}\theta(\ph_n)\geq 0$ and equality holds if and only if $\ph_n$ is equivalent to a sequence of homomorphisms.

Now fix a sequence $(\eps_n)_{n\in\NN}$ of strictly positive real numbers such that $\lim_{n\to\uf}\eps_n=0$ and let $(k_n)_{n\in\NN}$ a sequence of natural numbers. By the above, we need to prove that for all sequences of $\eps_n$-almost representations $\psi_n\colon S\to{\rm U}(k_n)$ the quantity $\theta(\psi_n)$ tends to $0$. For each $n\in\NN$, the set of $\eps_n$-almost homomorphisms $\ph\colon S\to{\rm U}(k_n)$ is compact and since $\theta$ is continuous, there is $\ph_n\colon S\to {\rm U}(k_n)$ such that $\defect(\ph_n)\leq \eps_n$ and $\ph_n$ maximizes $\theta$ for all $n\in\NN$. Evidently $\ph_n$ is an asymptotic representation.
Thus, by Proposition \ref{closeto} and Theorem \ref{speed} there is an asymptotic representation $\ph_n'\colon S\to{\rm U}(k_n)$ such that $\dist(\ph_n,\ph_n')\leq C\defect(\ph_n)$ and
\begin{equation}\label{howi}
\defect(\ph_n')\leq \frac{1}{4}\defect(\ph_n)
\end{equation}
for most $n\in\NN$.
In particular, $\ph'_n$ is also an $\eps_n$-almost representation, and it follows that for most $n$, we have
\[\homdist(\ph_n)\leq \homdist(\ph'_n)+C\defect(\ph_n).\]
Furthermore, by maximality we have that
\begin{equation*}
\homdist(\ph'_n)-2C\defect(\ph'_n)=\theta(\ph'_n)\leq \theta(\ph_n)=\homdist(\ph_n)-2C\defect(\ph_n),
\end{equation*}
and putting these estimates together, we get
\begin{equation}\label{howo}
\homdist(\ph'_n)-2C\defect(\ph'_n)\leq \homdist(\ph'_n)-C\defect(\ph_n),
\end{equation}
or
\[\defect(\ph_n)\stackrel{\eqref{howo}}{\leq} 2\defect(\ph'_n)\stackrel{\eqref{howi}}{\leq} \frac{1}{2}\defect(\ph_n),\]
which can only be the case if $\defect(\ph_n)=0$ for most $n$.
But then $\ph_n$ is really a representation for most $n\in\NN$, so $\homdist(\ph_n)=0$ and we conclude $\lim_{n\to\uf}\theta(\ph_n)=0$. Since $\theta(\ph_n)$ was chosen maximal, we conclude that $\lim_{n\to\uf}\theta(\psi_n)=0$ for all $\eps_n$-almost representations $\psi_n$.
\end{proof}

\begin{rem}
Note that the same proof is still valid if one replaces $\frob{\cdot}$ with any submultiplicative norm $\lVert\cdot\rVert$ and the $2$-Kazhdan assumption with a suitable cohomology vanishing assumption. This, for instance, gives a sufficient condition for stability with respect to the operator norm, where one could assume vanishing of second cohomology with coefficients in a $C^*$-algebra, but it seems difficult to prove the existence of a group $\Gamma$ with such properties -- a task that will already occupy the remaining sections in the Hilbert space case.
\end{rem}

\begin{rem}
Note that Theorem \ref{kazh2stab} together with Proposition \ref{kazhext} imply that virtually free groups are Frobenius-stable -- a fact that seems cumbersome to establish directly.
\end{rem}

For the sake of reference, we formulate following dichotomy, which is an immediate corollary to Theorem \ref{kazh2stab}, explicitly.

\begin{cor} \label{corka}
Let $\Gamma$ be a finitely presented $2$-Kazhdan group. Then either
\begin{itemize}
\item $\Gamma$ is residually finite, or
\item $\Gamma$ is not Frobenius-approximated.
\end{itemize}
\end{cor}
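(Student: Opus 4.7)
The plan is to note that this corollary is a direct logical repackaging of two results already established in the paper. First, I would invoke Theorem \ref{kazh2stab}, which asserts that any finitely presented $2$-Kazhdan group is Frobenius-stable. Having $\Gamma$ be Frobenius-stable in hand, I would then appeal to the general proposition at the end of Section \ref{apphom}, which states that any finitely presented group that is simultaneously $\gclass$-stable and $\gclass$-approximated must be residually $\gclass$, and in particular that a finitely presented group that is both $\Uni_{\mathrm{Frob}}$-stable and $\Uni_{\mathrm{Frob}}$-approximated is residually finite.

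With these two ingredients the argument is a one-line contrapositive: assume $\Gamma$ is a finitely presented $2$-Kazhdan group and is \emph{not} residually finite; we want to conclude that $\Gamma$ cannot be Frobenius-approximated. If it were Frobenius-approximated, then combining Theorem \ref{kazh2stab} (Frobenius-stability) with the proposition above would force $\Gamma$ to be residually finite, contradicting our assumption. Hence one of the two bullets must hold.

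Since both ingredients are already proved in the excerpt, there is no genuine obstacle to overcome in this corollary; the only subtle point to double-check is that ``residually $\Uni_{\mathrm{Frob}}$'' indeed translates to residually finite — but this is precisely what is spelled out in the second sentence of the proposition at the end of Section \ref{apphom}, so no further work is needed. The proof should therefore be very short, essentially just citing Theorem \ref{kazh2stab} and that proposition.
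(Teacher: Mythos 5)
Your proposal is correct and matches the paper's reasoning exactly: the paper explicitly states that Corollary \ref{corka} is ``an immediate corollary to Theorem \ref{kazh2stab},'' and the step from Frobenius-stability to the dichotomy is precisely the proposition at the end of Section \ref{apphom}, just as you invoke it. Nothing further is needed.
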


The techniques in Section \ref{sec:def} rely on submultiplicativity of the norm and thus cannot be directly applied to the normalized Hilbert-Schmidt norm $\hsnorm\cdot$. It is worth noting, though, that since $\frac{1}{\sqrt{k}}\frob{A}=\hsnorm A\leq \opnorm{A}\leq \frob{A}$ for $A\in {\rm M}_k(\mathbb C)$, we get the following immediate corollary to Theorem \ref{kazh2stab}.

\begin{cor}\label{kazh2hyper}
Let $\Gamma=\langle S\mid R\rangle$ be a finitely presented 2-Kazhdan group and let $\ph_n\colon S\to{\rm U}(k_n)$ be a sequence of maps such that
\[\defect(\ph_n)=o_\uf(k_n^{-1/2}),\]
where the defect is measured with respect to either $\hsnorm\cdot$ or $\opnorm\cdot$. Then
$\ph_n$ is equivalent to a sequence of homomorphisms.
\end{cor}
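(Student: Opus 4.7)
The plan is to reduce everything to Theorem \ref{kazh2stab} by exploiting the scaling relations between the three norms on $M_{k}(\mathbb{C})$. Recall the inequalities given just before the statement: for every $A \in M_{k}(\mathbb{C})$ we have $\frob{A} = \sqrt{k}\,\hsnorm{A}$ and $\frob{A} \leq \sqrt{k}\,\opnorm{A}$, while conversely $\hsnorm{A} \leq \frob{A}$ and $\opnorm{A} \leq \frob{A}$ (for $k \geq 1$). The hypothesis $\defect(\ph_n) = o_\uf(k_n^{-1/2})$ is calibrated precisely so that multiplication by $\sqrt{k_n}$ turns the small HS- or op-defect into a small Frobenius-defect, while the reverse inequalities let us transfer Frobenius-closeness back to the original norm essentially for free.

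Concretely, I would first estimate the Frobenius defect. If $\defect_{\hsnorm{\cdot}}(\ph_n) = o_\uf(k_n^{-1/2})$, then for every $r \in R$,
\[
\frob{\ph_n(r) - 1_{k_n}} = \sqrt{k_n}\,\hsnorm{\ph_n(r) - 1_{k_n}},
\]
so $\defect_{\frob{\cdot}}(\ph_n) = \sqrt{k_n}\,\defect_{\hsnorm{\cdot}}(\ph_n) = o_\uf(1)$. The operator-norm case is identical via $\frob{\cdot} \leq \sqrt{k_n}\,\opnorm{\cdot}$. Either way, $\ph_n$ is a genuine asymptotic representation in the Frobenius sense.

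Next I would invoke Theorem \ref{kazh2stab}: since $\Gamma$ is finitely presented and $2$-Kazhdan, it is Frobenius-stable, so there exists a sequence of homomorphisms $\pi_n \colon \Gamma \to \Uni(k_n)$ with $\lim_{n\to\uf}\dist_{\frob{\cdot}}(\ph_n,\pi_n|_S) = 0$. Finally, since $\hsnorm{\cdot} \leq \frob{\cdot}$ and $\opnorm{\cdot} \leq \frob{\cdot}$, the same sequence $(\pi_n)_{n\in\NN}$ witnesses asymptotic equivalence of $(\ph_n)_{n\in\NN}$ to homomorphisms in whichever of the two norms was originally prescribed.

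There is no genuine obstacle here; the corollary is a rescaling observation sitting on top of Theorem \ref{kazh2stab}. The only conceptual point worth flagging is that the threshold $o_\uf(k_n^{-1/2})$ is exactly the one dictated by the conversion factor between normalized and unnormalized Hilbert--Schmidt norms, so the statement cannot be strengthened to $O_\uf(k_n^{-1/2})$ or weaker without going beyond the Frobenius stability machinery developed in Section \ref{sec:def}.
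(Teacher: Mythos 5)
Your proof is correct and follows exactly the paper's argument: rescale by $\sqrt{k_n}$ to pass to a Frobenius asymptotic representation, apply Theorem~\ref{kazh2stab}, then use $\hsnorm{\cdot},\opnorm{\cdot}\leq\frob{\cdot}$ to transfer the closeness back. The remark on the sharpness of the $o_\uf(k_n^{-1/2})$ threshold is a reasonable aside not present in the paper, but the core argument is identical.
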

\begin{proof}
Let $\lVert\cdot\rVert$ be the norm in question. Then 
\[\defect_{\frob\cdot}(\ph_n)\leq \sqrt{k_n}\defect_{\lVert\cdot\rVert}(\ph_n)=o_\uf(1),\]
in other words, $\ph_n$ is an asymptotic representation with respect to $\frob\cdot$ so by Theorem \ref{kazh2stab} there are representations $\pi_n\colon\Gamma \to{\rm U}(k_n)$ with
\[\lVert \ph_n(s)-\pi_n(s)\rVert\leq \frob{\ph_n(s)-\pi_n(s)}=o_\uf(1)\]
for $s\in S$.
\end{proof}

The preceding corollary provides some quantitative information on the Connes Embedding Problem. Indeed, if a finitely presented, non-residually finite, 2-Kazhdan group is $\Uni_{\mathrm{HS}}$-approximated, then there is some upper bound on the quality of the approximation in terms of the dimension of the unitary group. Needless to say it would be very interesting to decide if groups as above are $\Uni_{\mathrm{HS}}$-approximated.

\subsection{Non-residually finite $2$-Kazhdan groups} \label{kazhres}
In this section, we present examples of finitely presented, non-residually finite groups which are $2$-Kazhdan and hence finish the proof of Theorem \ref{main}. Note first that all the examples $\Gamma$ presented in Section \ref{sec:hk} are residually finite. In this section we will show that some of these $\Gamma$'s have finite central extensions
$$1 \to C \to \tilde \Gamma \to \Gamma \to 1,$$
where $C$ is a finite cyclic group and $\tilde \Gamma$ is not residually finite. Now, $C$ being finite is strongly $n$-Kazhdan for every $n$ and so, if $\Gamma$ is $2$-Kazhdan, then the same holds for $\tilde \Gamma$ by Proposition \ref{kazhext}. Hence, we may combine our results of this section with the results from the previous section to obtain examples of $2$-Kazhdan groups which are not residually finite.

Our construction will imitate the construction of Deligne \cite{MR507760} of non-residually finite central extensions of some non-uniform arithmetic lattices in real Lie groups. See also the work of Raghunathan \cite{MR735524, MR1355004}, where such central extensions were constructed for some uniform lattices in ${\rm Spin}(2,n)$. These examples were later used by Toledo \cite{MR1249171} in his famous work showing the existence of algebraic varieties with non-residually finite fundamental groups. A short and very readable exposition of Deligne's argument was given by Witte-Morris \cite{morris}. 

Our examples are $p$-adic analogues of Deligne's examples and his original proof actually works for them. He assumed that the algebraic group $\mathbf G$ to be isotropic and hence got only non-uniform lattices, as at the time the congruence subgroup property was known only in such cases. Nowadays, we can argue for more general lattices along the same lines.

Let $D$ be the standard quaternion algebra over $\mathbb Z$, defined as $$D= \mathbb Z\langle i,j,k \rangle/(i^2=j^2=k^2=-1, ij=k)$$ and set $D_{R}:=R \otimes_{\mathbb Z} D$ for an arbitrary unital commutative ring $R$. It is well-known that $D_{\mathbb R}$ is the Hamiltonian division algebra $\mathbb H$, whereas
$D_{{\mathbb Q}_p} \cong {\rm M}_2({\mathbb Q}_p)$ for $p\geq 3$, where the second isomorphism is basically a consequence of the fact that the congruence $x^2+y^2=-1$ can be solved modulo $p$.
Consider also the standard involution $\tau \colon D_R \to D_R$ and let $h \colon D_R^n \times D_R^n \to D_R$ be the canonical sesqui-linear hermitian form on $D_R^n$, i.e.\ $$h((x_1,\dots,x_n),(y_1,\dots,y_n))= x_1\tau(y_1) + \cdots + x_n\tau(y_n).$$ Consider now $\mathbf G(R) := {\rm SU}(n,D_R,h)$. Note that $\mathbf G(R)$ is simply the group formed by those $n \times n$-matrices with entries in $D_R$, such that the associated $D_R$-linear map preserves the form $h$. The functor $\mathbf G$ is an absolutely almost simple, simply connected $\mathbb Q$-algebraic group which is $\bar {\mathbb Q}$-isomorphic to ${\mathbf {Sp}}(2n)$ and hence of type $C_n$, see $\S 2.3$ in \cite{MR1278263}. 
Embedding $D_{\mathbb R} \subset M_2(\mathbb C)$, one can show that ${\mathbf G}(\mathbb R)$ is isomorphic to a simply connected compact Lie group of type $C_n$, namely the quaternionic unitary group ${\rm Sp}(n)={\rm U}(2n) \cap {\rm Sp}(2n,\mathbb C).$ 

Let now $p \geq 3$ be a rational prime. Since $D(\mathbb Q_p)\cong {\rm M}_2(\mathbb Q_p)$, the group $\mathbf G$ becomes split over $\mathbb Q_p$ and $\mathbf G(\mathbb Q_p)$ is a non-compact group isomorphic to ${\rm Sp}(2n,\mathbb Q_p)$. The group $\Gamma:= \mathbf{G}(\mathbb Z[1/p])$ sits diagonally as a lattice in $\mathbf{G}(\mathbb R) \times \mathbf{G}(\mathbb Q_p)$. However, since $\mathbf{G}(\mathbb R)$ is compact, this yields that $$\Gamma=\mathbf{G}(\mathbb Z[1/p]) \subset \mathbf{G}(\mathbb Q_p)$$ is also a lattice. It is a standard fact that lattices in ${\rm Sp}(2n,\mathbb Q_p)$ are cocompact, basically since ${\rm Sp}(2n,\mathbb Q_p)$ admits a basis of neighborhoods of the identity that consists of torsion free subgroups.
In this concrete case, we can identify $\Gamma$ with the group
$${\rm U}(2n) \cap {\rm Sp}(2n,\mathbb Z[i, 1/p]).$$

It was proved by Rapinchuk \cite{MR1015345} and Tomanov \cite{MR1022796} that the group $\Gamma=\mathbf{G}(\mathbb Z[1/p])$ has the congruence subgroup property. Let us explain what this means in the adelic language: The group $\Gamma$ is a subgroup of $\mathbf G(\mathbb Q)$ and we can define two topologies on ${\mathbf G}(\mathbb Q)$ as follows. The first is the \emph{arithmetic topology}, for which the arithmetic subgroups, i.e. the subgroups commensurable to $\Gamma$ serve as a fundamental system of neighborhoods of the identity. The second is the \emph{congruence topology} for which we take as a basis of neighborhoods of the identity only those arithmetic groups which contain, for some natural number $m$ with $(m,p)=1$, one of the principal congruence subgroups
$$\Gamma(m) := {\rm ker}\left(\mathbf G(\mathbb Z[1/p])) \to \mathbf G(\mathbb Z/m\mathbb Z) \right).$$

We denote by $\widehat{\mathbf G(\mathbb Q)}$ the completion with respect to the arithmetic topology and by $\overline{\mathbf G(\mathbb Q)}$ the completion with respect to the congruence topology. There is a canonical surjective homomorphism $$\pi \colon \widehat{\mathbf G(\mathbb Q)} \to \overline{\mathbf G(\mathbb Q)}.$$ The result of Rapinchuk and Tomanov \cite{MR1015345, MR1022796} combined with the work of Prasad-Rapinchuk \cite{MR1441007} says that in our case, $\pi$ is an isomorphism of topological groups.

Now, by the strong approximation theorem, $\overline{\mathbf G(\mathbb Q)}$ is isomorphic to
$$\mathbf G\left(\mathbb A_{f \setminus \{p\}}\right) = \prod_{l \neq p}\!{}^*\ \mathbf G(\mathbb Q_l),$$
where $\prod^*$ denotes the restricted product as usual and $\mathbb A_{f \setminus \{p\}}$ is a subring of the $\mathbb Q$-adeles $\mathbb A$, the restricted product of $\mathbb Q_l$ for all primes $l \neq p$. In particular, we get
$$\mathbf G(\mathbb A)= \mathbf G(\mathbb R) \times \mathbf G(\mathbb Q_p) \times \overline{\mathbf G(\mathbb Q)}.$$

Now a result of Prasad \cite{MR2020658} (see also Deodhar \cite{MR0372057} and Deligne \cite{MR507760}) says that for every $p$, $\mathbf G(\mathbb Q_p)$ has a universal central extension
$$1 \to C(p) \to  \widetilde{\mathbf G(\mathbb Q_p)} \to \mathbf G(\mathbb Q_p) \to 1,$$
where $C(p)$ denotes the group of roots of unity in $\mathbb Q_p$, i.e.\ a cyclic group of order $p-1$. We denote by $\widetilde \Gamma$ and by $\widetilde{\mathbf G(\mathbb Q)}$ the inverse images of $\Gamma$ and $\mathbf G(\mathbb Q)$ under the quotient map in the above extension. 

\vspace{0.1cm}

We claim that if $p \geq 5$, then the group $\widetilde{\Gamma}$ is not residually finite.
\begin{prop} Every finite index subgroup of $\widetilde{\Gamma}$ contains the unique subgroup of index $2$ in $C(p)$. In particular, if $p \geq 5$,  $\widetilde{\Gamma}$ is not residually finite.
\end{prop}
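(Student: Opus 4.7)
The strategy adapts Deligne's argument \cite{MR507760} to the $p$-adic setting. By replacing $\Lambda$ with the intersection of its finitely many conjugates, we may assume $\Lambda$ is normal in $\widetilde{\Gamma}$, and set $K := \Lambda \cap C(p)$. Since $C(p)$ is cyclic of order $p-1$, it has a unique subgroup of each divisor index; hence it suffices to prove $[C(p):K] \leq 2$, and then $K$ automatically contains the unique index-$2$ subgroup. The second assertion of the proposition then follows immediately: for $p \geq 5$ the cyclic group $C(p)$ has order $p-1 \geq 4$, so its index-$2$ subgroup is non-trivial and witnesses the failure of residual finiteness of $\widetilde{\Gamma}$.

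To bound the index, we pass to the finite central extension
$$1 \to C(p)/K \to \widetilde{\Gamma}/\Lambda \to \Gamma/q(\Lambda) \to 1.$$
By the congruence subgroup theorem of Rapinchuk \cite{MR1015345} and Tomanov \cite{MR1022796}, combined with the identification of arithmetic and congruence completions of $\mathbf{G}(\mathbb{Q})$ due to Prasad-Rapinchuk \cite{MR1441007}, the finite-index subgroup $q(\Lambda)$ contains a principal congruence subgroup $\Gamma(m)$ with $(m,p)=1$. Consequently $\Gamma/q(\Lambda)$ is a continuous quotient of the congruence completion $\overline{\mathbf{G}(\mathbb{Q})} \cong \prod^{*}_{l\neq p} \mathbf{G}(\mathbb{Q}_l)$, and pulling back the central extension above realises it as a continuous central extension of an open subgroup of this restricted product by the finite cyclic group $C(p)/K$.

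Now we assemble the global picture. Combining the continuous extension away from $p$ with the canonical universal extension $\widetilde{\mathbf{G}(\mathbb{Q}_p)} \to \mathbf{G}(\mathbb{Q}_p)$ at $p$ and the trivial extension at the compact simply connected archimedean place $\mathbf{G}(\mathbb{R}) \cong \mathrm{Sp}(n)$, we obtain a continuous central extension of a finite-index open subgroup of $\mathbf{G}(\mathbb{A})$ by $C(p)/K$. The reciprocity law of Moore-Matsumoto for the local metaplectic cocycles says that this adelic extension becomes trivial when restricted to the diagonal copy of $\mathbf{G}(\mathbb{Q})$, which is dense in the relevant open subgroup. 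Since $\mathbf{G}$ is of symplectic type $C_n$, the local Matsumoto-Moore cocycle at every place $v \neq p$ factors through the quadratic Hilbert symbol and hence takes values in $\{\pm 1\} \subset C(p)$. Reciprocity therefore forces the contribution at $p$, namely the class of the $C(p)/K$-extension, to also be $2$-torsion, so that $[C(p):K] \leq 2$.

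The main obstacle is rigorously identifying our continuous central extensions at the places $v \neq p$ with quotients of the Matsumoto-Moore class there, so that Moore reciprocity genuinely applies to the adelic extension we have constructed. This is the technical heart of Deligne's method; its transfer to the $p$-adic setting rests on the solution of the congruence subgroup problem by Rapinchuk-Tomanov, which plays the role that Bass-Lazard-Serre and Mennicke played for $\mathrm{Sp}_{2n}(\mathbb{Z})$ in the original real case.
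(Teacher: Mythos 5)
Your proposal follows the same overall strategy as the paper's proof --- Deligne's method, transported to the $p$-adic setting via the Rapinchuk--Tomanov solution of the congruence subgroup problem and the Moore/Prasad--Rapinchuk computation of the metaplectic kernel --- but it leaves a genuine gap at the step you yourself flag. You need to produce an actual central extension of $\mathbf G(\mathbb A)$ by a quotient of $C(p)$ that (a) is continuous, (b) splits over the diagonal $\mathbf G(\mathbb Q)$, and (c) restricts at $p$ to the universal extension $\widetilde{\mathbf G(\mathbb Q_p)}$. Saying one ``combines'' the pulled-back finite extension away from $p$ with the universal extension at $p$ does not produce such an object; those extensions live over different groups and do not glue by fiat. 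The paper resolves this by an explicit construction: it lifts the arithmetic topology to $\widetilde{\mathbf G(\mathbb Q)}$ (using subgroups commensurable to $\widetilde\Gamma$ as a neighborhood basis), forms the completion $\widehat{\widetilde{\mathbf G(\mathbb Q)}}$, which is an extension of $\widehat{\mathbf G(\mathbb Q)} = \overline{\mathbf G(\mathbb Q)}$ by a quotient $Z$ of $C(p)$, then takes the product $\widetilde E := \mathbf G(\mathbb R) \times \widetilde{\mathbf G(\mathbb Q_p)} \times \widehat{\widetilde{\mathbf G(\mathbb Q)}}$ and quotients by the antidiagonal $\{(1,a,\mu(a))\}$ to obtain a bona fide extension $E$ of $\mathbf G(\mathbb A)$ by $Z$. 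The splitting over $\mathbf G(\mathbb Q)$ then falls out of the construction (the diagonal $\widetilde{\mathbf G(\mathbb Q)} \to E$ factors through $\mathbf G(\mathbb Q)$), not from a reciprocity computation.

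A second, related imprecision: you invoke ``Moore reciprocity'' to argue that the local cocycles are $2$-torsion because each factors through a Hilbert symbol. But the Steinberg cocycle of $\mathrm{Sp}(2n)$ over $\mathbb Q_l$ takes values in the full group of roots of unity of $\mathbb Q_l$, not $\{\pm1\}$; the bound by $2$ comes from the precise statement that the kernel of the universal central extension of $\mathbf G(\mathbb A)$ splitting over $\mathbf G(\mathbb Q)$ is the group of roots of unity of $\mathbb Q$ (Moore for split groups, Prasad--Rapinchuk in general). The paper cites this cleanly; your sketch replaces it by an informal local-to-global heuristic that is not quite the same statement. Once the extension $E$ is built and shown to split over $\mathbf G(\mathbb Q)$, applying the Prasad--Rapinchuk theorem directly gives $|Z|\leq 2$ without any cocycle gymnastics.
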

\begin{proof}
To prove this, we will lift the arithmetic topology from $\mathbf G(\mathbb Q)$ to its central extension $\widetilde{\mathbf G(\mathbb Q)}$ as follows. We define the arithmetic topology on $\widetilde{\mathbf G(\mathbb Q)}$ as the topology for which all subgroups commensurable to $\widetilde{\Gamma}$ serve as a fundamental system of neighborhoods of the identity. We denote by $\widehat{\widetilde{\mathbf G(\mathbb Q)}}$ its Hausdorff completion. It is clear from the definition that there exists a central extension of topological groups
$$1 \to Z \to \widehat{\widetilde{\mathbf G(\mathbb Q)}} \to \widehat{\mathbf G(\mathbb Q)} \to 1,$$
where $Z$ is a quotient of $C(p)$, say by the quotient homomorphism $\mu \colon C(p) \to Z$, where $\ker(\mu)$ is exactly the intersection of all the finite-index subgroups of $\widetilde{\Gamma}$. The ultimate goal is to show that if $p \geq  5$, then $\ker(\mu)$ is non-trivial which would show that $\widetilde{\Gamma}$ is not residually finite. Define now
$$\widetilde{E} := \mathbf G(\mathbb R) \times \widetilde{\mathbf G(\mathbb Q_p)} \times \widehat{ \widetilde{\mathbf G(\mathbb Q)}}$$
and observe that it maps onto $\mathbf G(\mathbb R) \times {\mathbf G(\mathbb Q_p)} \times \widehat{{\mathbf G(\mathbb Q)}} = \mathbf G(\mathbb A)$ with kernel
$1 \times C(p) \times Z.$ Finally, we set
$$E= \frac{\widetilde{E}}{\{(1,a,b) \in 1 \times C(p) \times Z \mid b=\mu(a)\}}.$$
Now, the group $E$ is a central extension of $\mathbf G(\mathbb A)$ with kernel isomorphic to $Z$. Moreover, we also see from the definitions that the natural diagonal map $\widetilde{\mathbf G(\mathbb Q)} \to \widetilde E \to E$ sends $a \in C(p) \subset \widetilde{\mathbf G(\mathbb Q)}$ to  $(1,a,\mu(a))$ and hence factors through a homomorphism $\mathbf G(\mathbb Q) \to E$.
This shows that the central extension
$$1 \to Z \to E \to \mathbf G(\mathbb A) \to 1$$
splits over the subgroup $\mathbf G(\mathbb Q)$ of $\mathbf G(\mathbb A)$. Note that since $\mathbf G(\mathbb Q)$ is perfect, the same applies to $\mathbf G(\mathbb A)$. Then a  result on $\mathbf G(\mathbb A)$ going back to Moore \cite{MR0244258} for split groups and Prasad-Rapinchuk \cite{MR1441007} for the general case, asserts that the universal central extension of $\mathbf G(\mathbb A)$ that splits over $\mathbf G(\mathbb Q)$ has, in the case of our $\mathbf G$, a kernel of order $2$ -- basically since the groups of roots of unity in $\mathbb Q$ is $\{\pm 1\}$.
Hence, we can conclude that $|Z| \leq 2$. This proves the first part. More specifically, this shows that the kernel of the map from the profinite completion $\widehat{\widetilde{\Gamma}}$ of $\widetilde{\Gamma}$, which is realised as a compact-open subgroup of $\widehat{\widetilde{\mathbf G(\mathbb Q)}}$, to the profinite completion $\widehat \Gamma$ of $\Gamma$, which is realised as a compact-open subgroup of $\widehat{\mathbf G(\mathbb Q)} = \overline{\mathbf G(\mathbb Q)}$ is of order at most $2$. Hence, every finite index subgroup of $\widetilde{\Gamma}$ contains the index $2$ subgroup of $C(p)$ in the center of $\widetilde{\Gamma}$.  

Now, if $p \geq 5$, then $2<p-1$ and this proves that $\widetilde{\Gamma}$ is not residually finite. 
\end{proof}
In conclusion, since $\widetilde \Gamma$ is $2$-Kazhdan, by Theorem \ref{higherkazhdan} and Proposition \ref{kazhext}, it can not be Frobenius-approximated by Corollary \ref{corka}. This finishes the proof of Theorem \ref{main}.

\section*{Acknowledgments}
The first and the last named authors were supported by ERC Consolidator Grant No.\ 681207. The second named author was supported by the ERC when visiting the Hebrew University at Jerusalem. The third author was supported by the ERC, NSF and BSF. 

The last author wants to thank David Fisher for fruitful discussions back in March 2011 -- the idea of a cohomological obstruction to stability of asymptotic representations was first found there and later independently by the second and third author. We are grateful to Pierre Pansu and especially to Andrei  Rapinchuk for useful remarks and references. 
We thank the Isaac Newton Institute in Cambridge for its hospitality 
during the workshop \emph{Approximation, deformation, and quasification} (supported by EPSRC Grant No.  EP/K032208/1) as part of the program on \emph{Non-positive curvature: group actions and cohomology}.
\begin{bibdiv}
\begin{biblist}

\bib{MR2439729}{book}{
   author={Abramenko, Peter},
   author={Brown, Kenneth S.},
   title={Buildings},
   series={Graduate Texts in Mathematics},
   volume={248},
   note={Theory and applications},
   publisher={Springer, New York},
   date={2008},
   pages={xxii+747},
}

\bib{Asurvey}{article}{
   author={Arzhantseva, Goulnara},
   title={Asymptotic approximations of finitely generated groups},
   conference={
      title={Extended abstracts Fall 2012---automorphisms of free groups},
   },
   book={
      series={Trends Math. Res. Perspect. CRM Barc.},
      volume={1},
      publisher={Springer, Cham},
   },
   date={2014},
   pages={7--15},
}

\bib{MR3350728}{article}{
   author={Arzhantseva, Goulnara},
   author={P\u aunescu, Liviu},
   title={Almost commuting permutations are near commuting permutations},
   journal={J. Funct. Anal.},
   volume={269},
   date={2015},
   number={3},
   pages={745--757},
}

\bib{MR3284391}{article}{
   author={Bader, Uri},
   author={Nowak, Piotr W.},
   title={Cohomology of deformations},
   journal={J. Topol. Anal.},
   volume={7},
   date={2015},
   number={1},
   pages={81--104},
}

\bib{MR1465598}{article}{
   author={Ballmann, Werner},
   author={\'Swi\polhk atkowski, Jacek},
   title={On $L^2$-cohomology and property (T) for automorphism groups of
   polyhedral cell complexes},
   journal={Geom. Funct. Anal.},
   volume={7},
   date={1997},
   number={4},
   pages={615--645},
}

\bib{MR0142635}{article}{
   author={Baumslag, Gilbert},
   author={Solitar, Donald},
   title={Some two-generator one-relator non-Hopfian groups},
   journal={Bull. Amer. Math. Soc.},
   volume={68},
   date={1962},
   pages={199--201},
}

\bib{becklub}{article}{
   author={Becker, Oren},
   author={Lubotzky, Alexander}
   author={Thom, Andreas}
   title={Stability and invariant random subgroups},
   Eprint={arXiv:1801.08381},
}

\bib{MR2415834}{book}{
   author={Bekka, Bachir},
   author={de la Harpe, Pierre},
   author={Valette, Alain},
   title={Kazhdan's property (T)},
   series={New Mathematical Monographs},
   volume={11},
   publisher={Cambridge University Press, Cambridge},
   date={2008},
   pages={xiv+472},
}

\bib{MR1437044}{article}{
   author={Blackadar, Bruce},
   author={Kirchberg, Eberhard},
   title={Generalized inductive limits of finite-dimensional $C^*$-algebras},
   journal={Math. Ann.},
   volume={307},
   date={1997},
   number={3},
   pages={343--380},
}

\bib{borel}{article}{
   author={Borel, Armand},
   title={Stable real cohomology of arithmetic groups},
   journal={Ann. Sci. \'Ecole Norm. Sup. (4)},
   volume={7},
   date={1974},
   pages={235--272 (1975)},
}

\bib{MR1721403}{book}{
   author={Borel, Armand},
   author={Wallach, Nolan},
   title={Continuous cohomology, discrete subgroups, and representations of
   reductive groups},
   series={Mathematical Surveys and Monographs},
   volume={67},
   edition={2},
   publisher={American Mathematical Society, Providence, RI},
   date={2000},
   pages={xviii+260},
}

\bib{MR1324339}{book}{
   author={Brown, Kenneth S.},
   title={Cohomology of groups},
   series={Graduate Texts in Mathematics},
   volume={87},
   note={Corrected reprint of the 1982 original},
   publisher={Springer-Verlag, New York},
   date={1994},
   pages={x+306},
   isbn={0-387-90688-6},
}

\bib{MR3038548}{article}{
   author={Burger, Marc},
   author={Ozawa, Narutaka},
   author={Thom, Andreas},
   title={On Ulam stability},
   journal={Israel J. Math.},
   volume={193},
   date={2013},
   number={1},
   pages={109--129},
}

\bib{cde}{article}{
	author={Carrion, John}, 
	author={Dadarlat, Marius}, 
	author={Eckhardt, Caleb},
	title={On groups with quasidiagonal $C^*$-algebras},
	journal={J. Funct. Anal.}, 
	volume={265},
	number={1},
	pages={135--152}, 
	year={2013},
}

\bib{MR0454659}{article}{
   author={Connes, Alain},
   title={Classification of injective factors. Cases $II_{1},$
   $II_{\infty },$ $III_{\lambda },$ $\lambda \not=1$},
   journal={Ann. of Math. (2)},
   volume={104},
   date={1976},
   number={1},
   pages={73--115},
}

\bib{chozth}{article}{
	author={De Chiffre, Marcus},
	author={Ozawa, Narutaka},
	author={Thom, Andreas},
	title={Operator algebraic approach to inverse and stability theorems for amenable groups},
	Eprint={https://arxiv.org/abs/1706.04544},
}

\bib{MR507760}{article}{
   author={Deligne, Pierre},
   title={Extensions centrales non r\'esiduellement finies de groupes
   arithm\'etiques},
   language={French, with English summary},
   journal={C. R. Acad. Sci. Paris S\'er. A-B},
   volume={287},
   date={1978},
   number={4},
   pages={A203--A208},
}

\bib{MR0372057}{article}{
   author={Deodhar, Vinay V.},
   title={On central extensions of rational points of algebraic groups},
   journal={Bull. Amer. Math. Soc.},
   volume={81},
   date={1975},
   pages={573--575},
}

\bib{MR1762517}{article}{
   author={Dymara, Jan},
   author={Januszkiewicz, Tadeusz},
   title={New Kazhdan groups},
   journal={Geom. Dedicata},
   volume={80},
   date={2000},
   number={1-3},
   pages={311--317},
}
\bib{MR1946553}{article}{
   author={Dymara, Jan},
   author={Januszkiewicz, Tadeusz},
   title={Cohomology of buildings and their automorphism groups},
   journal={Invent. Math.},
   volume={150},
   date={2002},
   number={3},
   pages={579--627},
}

\bib{exelloring}{article}{
   author={Exel, Ruy},
   author={Loring, Terry},
   title={Almost commuting unitary matrices},
   journal={Proc. Amer. Math. Soc.},
   volume={106},
   date={1989},
   number={4},
   pages={913--915},
}

\bib{fritz}{article}{
   author={Fritz, Tobias},
   title={On infinite-dimensional state spaces},
   journal={J. Math. Phys.},
   volume={54},
   date={2013},
   number={5},
   pages={052107, 8},
}

\bib{MR0320180}{article}{
   author={Garland, Howard},
   title={$p$-adic curvature and the cohomology of discrete subgroups of
   $p$-adic groups},
   journal={Ann. of Math. (2)},
   volume={97},
   date={1973},
   pages={375--423},
}

\bib{glebcom}{article}{
	title={Almost commuting matrices with respect to normalized Hilbert-Schmidt norm}
	author={Glebsky, Lev}
	Eprint={https://arxiv.org/abs/1002.3082}
}

\bib{glebskyrivera2008sofic}{article}{
	title={Sofic groups and profinite topology on free groups},
	author={Glebsky, Lev},
	author={Rivera, Luis Manuel},
	journal={Journal of Algebra},
	volume={320},
	number={9},
	pages={3512--3518},
	year={2008},
	publisher={Elsevier}
}

\bib{gromov}{article}{
	title={Endomorphisms of symbolic algebraic varieties},
	author={Gromov, Mikhael},
	journal={Journal of the European Mathematical Society},
	volume={1},
	number={2},
	pages={109--197},
	year={1999},
	publisher={Springer}
}

\bib{johnson}{article}{
   author={Grove, Karsten},
   author={Karcher, Hermann},
   author={Ruh, Ernst A.},
   title={Jacobi fields and Finsler metrics on compact Lie groups with an
   application to differentiable pinching problems},
   journal={Math. Ann.},
   volume={211},
   date={1974},
   pages={7--21},
}

\bib{MR3557457}{article}{
   author={Gundert, Anna},
   author={Wagner, Uli},
   title={On eigenvalues of random complexes},
   journal={Israel J. Math.},
   volume={216},
   date={2016},
   number={2},
   pages={545--582},
}

\bib{art:kazh}{article}{
   author={Kazhdan, David},
   title={On $\varepsilon$-representations},
   journal={Israel J. Math.},
   volume={43},
   date={1982},
   number={4},
   pages={315--323},
}

\bib{MR1078097}{article}{
   author={Kropholler, Peter H.},
   title={Baumslag-Solitar groups and some other groups of cohomological
   dimension two},
   journal={Comment. Math. Helv.},
   volume={65},
   date={1990},
   number={4},
   pages={547--558},
}

\bib{MR0244258}{article}{
   author={Moore, Calvin C.},
   title={Group extensions of $p$-adic and adelic linear groups},
   journal={Inst. Hautes \'Etudes Sci. Publ. Math.},
   number={35},
   date={1968},
   pages={157--222},
}

\bib{chnikthom}{article}{
	author={Nikolov, Nikolay},
	author={Schneider, Jakob},
	author={Thom, Andreas},
	title={Some remarks on finitarily approximated groups},
	Eprint={https://arxiv.org/abs/1703.06092},
}


\bib{MR3343347}{article}{
   author={Oppenheim, Izhar},
   title={Vanishing of cohomology and property (T) for groups acting on
   weighted simplicial complexes},
   journal={Groups Geom. Dyn.},
   volume={9},
   date={2015},
   number={1},
   pages={67--101},
}
\bib{MR3320894}{article}{
   author={Ozawa, Narutaka},
   author={R\o rdam, Mikael},
   author={Sato, Yasuhiko},
   title={Elementary amenable groups are quasidiagonal},
   journal={Geom. Funct. Anal.},
   volume={25},
   date={2015},
   number={1},
   pages={307--316},
}

\bib{MR1651383}{article}{
   author={Pansu, Pierre},
   title={Formules de Matsushima, de Garland et propri\'et\'e (T) pour des
   groupes agissant sur des espaces sym\'etriques ou des immeubles},
   language={French, with English and French summaries},
   journal={Bull. Soc. Math. France},
   volume={126},
   date={1998},
   number={1},
   pages={107--139},
}

\bib{MR2460675}{article}{
   author={Pestov, Vladimir G.},
   title={Hyperlinear and sofic groups: a brief guide},
   journal={Bull. Symbolic Logic},
   volume={14},
   date={2008},
   number={4},
   pages={449--480},
}

\bib{MR1278263}{book}{
   author={Platonov, Vladimir},
   author={Rapinchuk, Andrei S.},
   title={Algebraic groups and number theory},
   series={Pure and Applied Mathematics},
   volume={139},
   publisher={Academic Press, Inc., Boston, MA},
   date={1994},
   pages={xii+614},
}

\bib{MR2020658}{article}{
   author={Prasad, Gopal},
   title={Deligne's topological central extension is universal},
   journal={Adv. Math.},
   volume={181},
   date={2004},
   number={1},
   pages={160--164},
 }

\bib{MR1441007}{article}{
   author={Prasad, Gopal},
   author={Rapinchuk, Andrei S.},
   title={Computation of the metaplectic kernel},
   journal={Inst. Hautes \'Etudes Sci. Publ. Math.},
   number={84},
   date={1996},
   pages={91--187 (1997)},
}

\bib{MR2436761}{article}{
   author={R\u adulescu, Florin},
   title={The von Neumann algebra of the non-residually finite Baumslag
   group $\langle a,b|ab^3a^{-1}=b^2\rangle$ embeds into $R^\omega$},
   conference={
      title={Hot topics in operator theory},
   },
   book={
      series={Theta Ser. Adv. Math.},
      volume={9},
      publisher={Theta, Bucharest},
   },
   date={2008},
   pages={173--185},
}

\bib{MR735524}{article}{
   author={Raghunathan, Madabusi S.},
   title={Torsion in cocompact lattices in coverings of ${\rm Spin}(2,\,n)$},
   journal={Math. Ann.},
   volume={266},
   date={1984},
   number={4},
   pages={403--419},
}

\bib{MR1355004}{article}{
   author={Raghunathan, Madabusi S.},
   title={Corrigendum: ``Torsion in cocompact lattices in coverings of ${\rm
   Spin}(2,n)$''},
   journal={Math. Ann.},
   volume={303},
   date={1995},
   number={3},
   pages={575--578},
}

\bib{MR1015345}{article}{
   author={Rapinchuk, Andrei S.},
   title={On the congruence subgroup problem for algebraic groups},
   language={Russian},
   journal={Dokl. Akad. Nauk SSSR},
   volume={306},
   date={1989},
   number={6},
   pages={1304--1307},
   translation={
      journal={Soviet Math. Dokl.},
      volume={39},
      date={1989},
      number={3},
      pages={618--621},
   },
}

\bib{MR1157815}{book}{
   author={Rudin, Walter},
   title={Functional analysis},
   series={International Series in Pure and Applied Mathematics},
   edition={2},
   publisher={McGraw-Hill, Inc., New York},
   date={1991},
   pages={xviii+424},
   isbn={0-07-054236-8},
}


\bib{MR3583354}{article}{
   author={Tikuisis, Aaron},
   author={White, Stuart},
   author={Winter, Wilhelm},
   title={Quasidiagonality of nuclear $C^\ast$-algebras},
   journal={Ann. of Math. (2)},
   volume={185},
   date={2017},
   number={1},
   pages={229--284},
}

\bib{MR2566306}{article}{
   author={Thom, Andreas},
   title={Examples of hyperlinear groups without factorization property},
   journal={Groups Geom. Dyn.},
   volume={4},
   date={2010},
   number={1},
   pages={195--208},
}

\bib{MR1249171}{article}{
   author={Toledo, Domingo},
   title={Projective varieties with non-residually finite fundamental group},
   journal={Inst. Hautes \'Etudes Sci. Publ. Math.},
   number={77},
   date={1993},
   pages={103--119},
}

\bib{MR1022796}{article}{
   author={Tomanov, George},
   title={On the congruence-subgroup problem for some anisotropic algebraic
   groups over number fields},
   journal={J. Reine Angew. Math.},
   volume={402},
   date={1989},
   pages={138--152},
}

\bib{turing1938finite}{article}{
	author={Turing, Alan M.},
	title={Finite approximations to Lie groups},
	journal={Annals of Mathematics},
	pages={105--111},
	year={1938},
}

\bib{ulam}{article}{
	author={Ulam, Stanis\l aw},
	title={A collection of mathematical problems},
	status={Interscience Tracts in Pure and Applied Mathematics, no.~8, Interscience Publishers, New York-London},
	year={1960},
}

\bib{V1}{article}{
   author={Voiculescu, Dan},
   title={Asymptotically commuting finite rank unitary operators without
   commuting approximants},
   journal={Acta Sci. Math. (Szeged)},
   volume={45},
   date={1983},
   number={1-4},
   pages={429--431},
}

\bib{morris}{article}{
	author={ Witte Morris, Dave},
	title={A lattice with no torsion free subgroup of finite index (after P. Deligne)},
	status={June 2009 informal discussion at the University of Chicago},
}

\bib{MR1995802}{article}{
   author={\.Zuk, Andrzej},
   title={Property (T) and Kazhdan constants for discrete groups},
   journal={Geom. Funct. Anal.},
   volume={13},
   date={2003},
   number={3},
   pages={643--670},
   issn={1016-443X},
}

\end{biblist}
\end{bibdiv}

\end{document}